\documentclass[11pt, a4paper]{amsart}

\usepackage[utf8]{inputenc}
\usepackage{fontenc}
\usepackage{amsmath, amssymb, amsthm, amsfonts}
\usepackage{mathrsfs}      
\usepackage{esint}         
\usepackage{geometry}      
\usepackage{enumitem}      
\usepackage{mathtools}
\usepackage{stmaryrd}
\usepackage{enumitem}

\usepackage{lineno}

\usepackage[usenames,dvipsnames]{xcolor}

\newtheorem{theorem}{Theorem}[section]
\newtheorem{lemma}[theorem]{Lemma}
\newtheorem{proposition}[theorem]{Proposition}
\newtheorem{corollary}[theorem]{Corollary}

\theoremstyle{definition}
\newtheorem{definition}[theorem]{Definition}

\newtheorem{remark}[theorem]{Remark}

\usepackage[color=Aquamarine,textsize=footnotesize]{todonotes}
\newcommand{\mc}{\mathcal}

\newcommand{\Vol}{\operatorname{Vol}}

\newcommand{\til}{\widetilde}

\DeclareMathOperator{\Jac}{Jac}

\DeclareMathOperator{\RCD}{\ensuremath{\mathsf{RCD}}}
\DeclareMathOperator{\CAT}{\ensuremath{\mathsf{CAT}}}

\DeclareMathOperator{\bary}{bar}

\DeclareMathOperator{\argmin}{argmin}

\newcommand{\R}{{\bf R}}
\newcommand{\Z}{{\bf Z}}

\newcommand{\Ga}{\Gamma}
\newcommand{\ga}{\gamma}

\newcommand{\cout}[1]{}

\renewcommand{\hat}{\widehat}
\renewcommand{\bar}{\overline}

\def\be#1\ee{\begin{align}\begin{split} #1 \end{split}\end{align}}
\def\beq#1\eeq{\begin{align*}\begin{split} #1 \end{split}\end{align*}}

\definecolor{lime}{HTML}{A6CE39}

\newcommand{\Hn}{\mathcal{H}^N}             

\newcommand{\SigmaP}{\Sigma_p X}             
\newcommand{\Kp}{K_p X}                      
\newcommand{\Xreg}{X_{{\rm reg}}}                  
\newcommand{\Xsing}{S_X}                     

\newcommand{\cur}{\mathbf{I}}

\usepackage{hyperref}

\hypersetup{
  colorlinks   = true,
  urlcolor     = teal,
  linkcolor    = teal,
  citecolor   = teal
}

\usepackage[capitalize]{cleveref}

\begin{document}

\title[The entropy-degree theorem for Alexandrov spaces]{The entropy-degree theorem \\ for Alexandrov spaces}

\author[P. Su\'arez-Serrato]{P. Su\'arez-Serrato} 
\date{\today}
\address{Max-Planck Institut f\"ur Mathematik, Vivatsgasse 7, Bonn, Deutschland}
\address{University of California, Santa Barbara, Isla Vista, California}
\address{{\it On leave from}: Instituto de Matem\'aticas, Universidad Nacional Aut\'onoma de M\'exico UNAM, Mexico Tenochtitlan}

%

\begin{abstract}
We present the entropy-degree theorem for Lipschitz maps between Alexandrov spaces with curvature bounded below, extending the classical Besson--Courtois--Gallot entropy-rigidity results to this singular setting. 
The proof requires a new degree theorem for Alexandrov spaces, developed using the Ambrosio--Kirchheim theory of integral currents, showing the equivalence between analytical and topological degrees. 

Applications include geometric obstructions for negatively curved Einstein metrics on 4-orbifolds, volume bounds for cone-manifolds, quantitative inequalities for hyperbolic convex cores, and lower bounds on the asymptotic translation lengths of end-periodic surface homeomorphisms. 
We show that entropy-volume minimization under uniform lower curvature bounds obstructs to the formation of metric singularities in Gromov--Hausdorff limits, prove an Alexandrov boundary rigidity theorem, and establish volume minima for cone manifolds and cone orbifolds.
\end{abstract}

\maketitle

\tableofcontents

\section{Introduction}

Let $(Z,d,m)$ be a metric measure space.
Define the {\em volume entropy} $h(Z)$ as (cf \cite{M, BCG-Samb, Reviron}),
\[
h(Z)=\limsup_{R\to\infty}\frac{\log m (B(x,R))}{R},
\]
where $B(x,R)$ is the geodesic ball of radius $R$ in $Z$ centered at $x$ in $Z$ (this asymptotic growth rate is independent of the base point $x$).
Usually the volume entropy is defined with respect to the universal cover. 
Here, the traditional volume entropy is denoted by $h(\til{Z})$, whereas $h(Z)$ denotes the exponential volume growth rate on the space $Z$ itself. 
This difference is relevant to our arguments, as we will exploit the entropy of intermediate covering spaces, building upon a strategy pioneered by Sambusetti \cite{Sambusetti-1999}.
Observe that whenever $Z$ is a length space with a cocompact group of isometries, the $\limsup$ in the definition of $h(Z)$ converges to a true limit (both in smooth manifolds \cite{M}, and in metric measure spaces \cite[Proposition 3.3]{BCG-Samb}).

An \textit{Alexandrov space} $X$ of curvature bounded below by $\kappa$,  is a complete, locally compact length metric space satisfying a triangle comparison property formally analogous to Toponogov's theorem.
 We assume throughout that $X$ is an $N$-dimensional, closed (compact and boundaryless) Alexandrov space, denoting by $\mc{H}^{N}$ the Hausdorff measure of dimension $N$.
Alexandrov spaces may have singularities, and this flexibility makes them better suited in certain geometric constructions than smooth manifolds.
In \Cref{sec:Alex-geom} we include the definition of an Alexandrov space as well as the properties we will use.

Given a continuous map $f:X\to Y$ between topological spaces, consider $f_*:\pi_1(X)\to\pi_1(Y)$ the induced map on fundamental groups, and let $\bar{X}$ be the cover of $X$ of the corresponding subgroup $\ker f_*<\pi_1(X)$.
 Then  $\pi_1(\bar{X})=\ker f_*$ and $\Ga:=\pi_1(X)/\ker f_*$ acts on $\bar{X}$ by deck transformations. 
 Let $\til{Y}$ be the universal cover of $Y$.
 Notice that $\bar{X}$ is the smallest cover for which there exists a lift of $f$ to a map $\til{f}:\bar{X}\to\til{Y}$.

Our main result is:

\begin{theorem}\label{thm:Alex-entropy-degree}
Let  the natural number $N\geq 3$,  $(X,d,\mc{H}^{N})$ be an oriented $N$-Alexandrov  space without boundary, and $Y$ be a closed orientable negatively curved locally symmetric space of dimension $N$. 
Then, for any continuous map $f:X\to Y$,
\begin{equation}\label{eq:Alex-entropy-degree-ineq}
h(\overline{X})^N \mathcal{H}^N(X) \ge |\deg (f)| h(\til{Y})^N \mathcal{H}^N(Y).
\end{equation}

Moreover, if  (\ref{eq:Alex-entropy-degree-ineq}) is an equality and $|\deg (f)| = [\pi_1(Y) : f_* \pi_1(X)]$, then the space $X$ is isometric to a locally symmetric smooth Riemannian manifold, and $f$ is homotopic to a Riemannian covering of degree $|\deg (f)|$ (up to homothetically rescaling the metric on $X$).
\end{theorem}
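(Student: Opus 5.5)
The plan is to run the Besson--Courtois--Gallot barycenter method on the intermediate cover $\bar{X}$, following Sambusetti's strategy, and to replace the smooth degree formula with the Alexandrov degree theorem announced in the abstract. That theorem, built on Ambrosio--Kirchheim integral currents, identifies the topological degree with $\int_X \Jac F\, d\mc{H}^N$ for Lipschitz maps $F$, and it is the step that makes the whole argument work in the singular setting.

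\textbf{Reductions and construction of the natural maps.} Since $Y$ is a smooth closed manifold, $f$ is homotopic to a Lipschitz map, and degree is a homotopy invariant, so I may assume $f$ is Lipschitz. I lift $f$ to a $\Ga$-equivariant map $\til{f}:\bar{X}\to\til{Y}$. Fix $c>h(\bar{X})$. For $x\in\bar{X}$, let $\mu_x^c$ be the measure on $\partial\til{Y}$ obtained by pushing $e^{-c\,d(x,z)}\,d\mc{H}^N(z)$ forward through $\til{f}$, convolving with the Patterson--Sullivan (visual) measures $P(\til f(z),\theta)\,d\theta$, and normalizing. This is finite because $c>h(\bar X)$. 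Set $\til{F}_c(x)=\bary(\mu_x^c)$. This map is $\Ga$-equivariant and descends to $F_c:X\to Y$ homotopic to $f$. I would check that $F_c$ is locally Lipschitz using the implicit function theorem on the barycenter equation together with the Lipschitz dependence of the density in $x$.

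\textbf{The Jacobian estimate.} At $\mc{H}^N$-a.e.\ point of $X$ I would establish the pointwise inequality
\begin{equation*}
|\Jac F_c(x)|\le \Big(\tfrac{c}{h(\til Y)}\Big)^N .
\end{equation*}
The BCG linear-algebra lemma applies verbatim on the target, since $\til{Y}$ is smooth and symmetric. On the source I need the following: at a regular point, which is $\mc{H}^N$-a.e.\ by Otsu--Shioya / Perelman--Petrunin structure theory, the distance function $z\mapsto d(x,z)$ has a gradient of norm at most $1$ in the sense of the a.e.\ Euclidean tangent cone. Then the Cauchy--Schwarz step that bounds the source side by $c^2/N$ in trace goes through. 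Integrating and applying the degree theorem gives
\begin{equation*}
|\deg f|\,\mc{H}^N(Y)=\Big|\int_X \Jac F_c\Big|\le (c/h(\til Y))^N\mc{H}^N(X).
\end{equation*}
Letting $c\downarrow h(\bar X)$ yields \eqref{eq:Alex-entropy-degree-ineq}.

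\textbf{Rigidity.} I expect this to be the main obstacle. Under equality and $|\deg f|=[\pi_1Y:f_*\pi_1X]$, I would follow BCG and pass to a limit $F=\lim F_{c_k}$ as $c_k\to h(\bar X)$, using uniform Lipschitz bounds and Arzel\`a--Ascoli. I then aim to show that $F$ is $1$-Lipschitz after rescaling so that $h(\bar X)=h(\til Y)$, with $\Jac F=1$ a.e. The index hypothesis is what makes $F$ injective off a null set: it lets $\bar X$ be identified with a degree-one situation over the cover of $Y$ associated with $f_*\pi_1 X$. A $1$-Lipschitz, volume-preserving, degree-one map from an Alexandrov space onto a Riemannian manifold should then be an isometry, and here I would invoke volume rigidity of the kind found in Li's or Storm's work on metric spaces. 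Equality in the Cauchy--Schwarz step also forces the tangent cones to be Euclidean everywhere, which excludes singular points, so $X$ is smooth and locally symmetric and $F$ is a Riemannian covering. The delicate points are upgrading a.e.\ equality to everywhere information on a singular space, and controlling the limit map $F$.
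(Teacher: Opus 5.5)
Your proposal follows the paper's proof essentially step for step. The shared steps are: reduction to a Lipschitz $f$; Sambusetti-style natural maps $F_c$ on the intermediate cover $\bar{X}$ (you build them from the original Patterson--Sullivan/Busemann barycenter and derive the Jacobian bound yourself, whereas the paper cites the Lipschitz, homotopy and Jacobian properties of the natural maps from \cite{RCDbary}); the identity $\deg(f)\,\mathcal{H}^N(Y)=\int_X \Jac_{F_c}\,d\mathcal{H}^N$ from the analytic--topological degree theorem; and, in the equality case, a $1$-Lipschitz limit of the natural maps viewed as a map into the cover $\hat{Y}$ associated with $f_*\pi_1(X)$, where the index hypothesis gives degree one and $\mathcal{H}^N(X)=\mathcal{H}^N(\hat{Y})$, so that Li's Lipschitz-volume rigidity yields an isometry.

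Drop the claim that equality in the Cauchy--Schwarz step forces Euclidean tangent cones everywhere. That step is unjustified, since $\mathcal{H}^N$-a.e.\ equalities carry no information about the $\mathcal{H}^N$-null singular set. It is also unnecessary, because smoothness of $X$ already follows from $X$ being isometric to the smooth manifold $\hat{Y}$.
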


Douady and Earle \cite{Douady-Earle-1986} intiated the use of the conformal barycenter of measures to extend circle homeomorphisms to the disc.
The barycenter method was then developed further for smooth manifolds by Besson, Courtois, and Gallot \cite{Besson-Courtois-Gallot:95} to prove the entropy rigidity of smooth locally symmetric spaces. 
Adaptations to singular spaces were pioneered by Storm \cite{Storm-2002, Storm-2006, Storm-2007}. 
While Storm's approach constructed the BCG barycenter map using visual measures on horoboundaries and relied on Reshetnyak's theory of quasiregular mappings for geometric rigidity, here we see Alexandrov geometry within the context of non-collapsed metric measure spaces \cite{RCDbary}. 

We continue the barycenter method tradition here by now integrating the Ambrosio--Kirchheim theory of metric currents \cite{ambrosio2000currents}, which accommodates Lipschitz maps of arbitrary degree on Alexandrov spaces.
Namely, we show in \Cref{thm:an-deg-equals-top-deg} that the notions of topological and analytic degree coincide for Lipschitz maps on Alexandrov spaces. 
This equivalence was the missing piece of the puzzle in earlier work on the barycenter method on singular spaces.
It is also an aspect that had to be carefully substituted using {\it ad hoc} topological indices in our previous work about metric measure spaces \cite{RCDbary}.
We hope that this approach will allow the application of similar methods to other integral current spaces, beyond the Alexandrov realm. 

While building the proof of  \Cref{thm:Alex-entropy-degree} one technical point became apparent (which also limited the scope of our previous  work on metric measure spaces to the non-collapsed case).
When using metric Jacobians, it is important to rely on the Hausdorff measure so that the definitions of metric differentials by Kirchheim and Lytchak agree. 
This intricate detail is crucial for the inequalities in the barycenter method.
The proof of  \Cref{thm:Alex-entropy-degree} illuminates this constraint of the barycenter method, in that the reference measure of the metric measure spaces involved must be the Hausdorff measure (or perhaps a multiple of it).
In the case of Alexandrov spaces, this is not a restriction, as the Hausdorff measure is the natural measure to consider. 

Once these difficulties are overcome, the proof of \Cref{thm:Alex-entropy-degree} follows by methodically adapting our work on the barycenter method and volume entropy on metric measure spaces \cite{RCDbary}, itself resting on work by Bessi\'eres, Besson, Connell, Farb, Courtois, Gallot, and Sambusetti \cite{Bessieres-1998, Besson-Courtois-Gallot:95, BCG-Samb, Connell-Farb-2003, Sambusetti-1999}. 

\medskip

Observe that the rigidity statement of \Cref{thm:Alex-entropy-degree} intersects with, but fundamentally differs from, recent developments in Lipschitz-volume rigidity. 
Available rigidity results, such as those by Li \cite{li2015lipschitz} for Alexandrov spaces, {\it a priori} require the initial map to be $1$-Lipschitz and volume-preserving . 
Recent generalizations have relaxed the domain hypothesis, proving isometric rigidity for $1$-Lipschitz, degree-one maps from integral current spaces targeting Euclidean domains (Del Nin--Perales \cite{DelNin-Perales-2023}) or infinitesimally Euclidean manifolds (Z\"ust \cite{Zust-2024}), as well as maps from linearly locally contractible metric manifolds (Basso--Marti--Wenger \cite{Basso-Marti-Wenger-2025}). In contrast, the rigidity in \Cref{thm:Alex-entropy-degree} only assumes the existence of an arbitrary degree $k \ge 1$ continuous map into a negatively curved target. 
The barycenter method creates an isometry in the equality case in \Cref{thm:Alex-entropy-degree}, in the process constructing the intermediate $1$-Lipschitz, volume-preserving map. 

\medskip
We will next describe several applications of our main result, which extend prior work in diverse directions.
\medskip

In the past few years there has been significant interest in the existence of distinguished metrics on Gromov--Thurston manifolds, which are constructed as finite cyclic covers of hyperbolic manifolds branched along totally geodesic codimension-two submanifolds. 
Fine and Premoselli \cite{FP20}, and more recently Hamenst\"adt and J\"ackel \cite{HJ24}, constructed negatively curved Einstein metrics on such branched covers that do not admit hyperbolic metrics. 
Very recently, Hamenst\"adt \cite{Hamenstadt26} analyzed the geometry of these spaces, relying on metric measure approximations to establish strict volume inequalities in dimension 3. 
Using our \Cref{thm:Alex-entropy-degree}, we bypass these limit approximations, thus generalizing Hamenst\"adt's volume gap to all dimensions $N \ge 3$, while simultaneously providing a topological obstruction for the existence of Einstein metrics on 4-orbifolds.

Let $M$ be a closed hyperbolic $N$-manifold ($N \ge 3$), $\Sigma \subset M$ a totally geodesic codimension-two submanifold, and $M_d$ the degree-$d$ cyclic branched cover over $\Sigma$. 
The quotient $X = M_d / \mathbf{Z}_d$ naturally carries a hyperbolic cone metric with cone angle $2\pi/d \le \pi$, making it an $N$-dimensional Alexandrov space.

\begin{corollary}\label{cor:hamenstadt-gen}
For any dimension $N \ge 3$ and any degree $d \ge 2$, the $N$-dimensional Hausdorff measure of the singular quotient orbifold $X = M_d / \mathbf{Z}_d$ strictly bounds the volume of the smooth base manifold:
\[
\mathcal{H}^N(X) > \mathcal{H}^N(M).
\]
Consequently, any $\mathbf{Z}_d$-invariant normalized negatively curved Einstein metric on the branched cover $M_d$ satisfies $\operatorname{Vol}(M_d) > d \cdot \operatorname{Vol}(M)$.
\end{corollary}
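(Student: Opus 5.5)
We apply \Cref{thm:Alex-entropy-degree} to the orbifold $X = M_d/\mathbf{Z}_d$ with its hyperbolic cone metric of cone angle $2\pi/d$, mapping to $Y = M$.

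\textbf{The map and its degree.} The branched covering $M_d \to M$ is $\mathbf{Z}_d$-equivariant, so it descends to a map $f\colon X \to M$. This map is a homeomorphism on the underlying topological spaces: away from $\Sigma$ it is the identity of $M\setminus\Sigma$, and $X$ is topologically $M$ with the metric changed near $\Sigma$. It is Lipschitz, since it is the identity in the smooth part and contracts the angular directions near $\Sigma$. Hence $\deg f = 1$. Moreover $f_*$ is an isomorphism on $\pi_1$, because $X$ and $M$ have the same underlying space, so $|\deg f| = 1 = [\pi_1(M):f_*\pi_1(X)]$. Also $\ker f_*$ is trivial, so $\overline{X}$ is the universal cover $\til X$, the hyperbolic cone space obtained by lifting.

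\textbf{Entropy comparison.} We next need $h(\til X) \le h(\mathbf{H}^N) = N-1$. Locally, $\til X$ is CAT$(-1)$: cone angle at most $\pi$ still gives curvature $\ge -1$ in the Alexandrov sense, but we need an upper bound on volume growth. I would obtain it by comparing volumes of balls. Outside the singular set the metric is hyperbolic, and near the lifted copies of $\Sigma$ the metric is a warped product $\mathbf{H}^{N-2}\times$ a 2-dimensional cone of angle $2\pi/d$. Its volume growth is at most that of $\mathbf{H}^N$, since $\til X$ has curvature bounded below by $-1$. Bishop--Gromov for Alexandrov spaces gives $\mathcal{H}^N(B(x,R)) \le \Vol_{\mathbf{H}^N}(B(R))$, hence $h(\til X) \le N-1$.

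Plugging into \eqref{eq:Alex-entropy-degree-ineq} gives
\[
(N-1)^N\mathcal{H}^N(X) \ge h(\til X)^N\mathcal{H}^N(X) \ge (N-1)^N \Vol(M),
\]
which yields $\mathcal{H}^N(X)\ge \mathcal{H}^N(M)$.

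\textbf{Strictness.} This is the main obstacle. If equality held, then both inequalities would be equalities. By the rigidity clause of \Cref{thm:Alex-entropy-degree}, since the index condition holds, $X$ would be isometric, up to homothety, to a smooth locally symmetric manifold. But $X$ has a genuine metric singularity along $\Sigma$: its space of directions there is a spherical suspension of angle $2\pi/d<2\pi$, not a round sphere. This contradicts isometry to a Riemannian manifold. Homothety does not help, because the singularity is scale invariant. One subtlety is ruling out a rescaling in which entropy equality fails but the product still matches. The rigidity statement handles this, since it applies to equality in \eqref{eq:Alex-entropy-degree-ineq} itself, and we just showed that equality there forces equality in the chain above.

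\textbf{Einstein consequence.} Let $g$ be a $\mathbf{Z}_d$-invariant Einstein metric on $M_d$, normalized to $\mathrm{Ric} = -(N-1)$. It descends to an orbifold metric on $X$ with $\mathrm{Ric}\ge -(N-1)$. If the sectional curvature is bounded below, then $X$ with this metric is again an Alexandrov space with cone angles $2\pi/d$. Bishop--Gromov in the Ricci form (or RCD form) gives $h(\til X)\le N-1$. The same argument then gives $\Vol(M_d)/d = \mathcal{H}^N(X) > \Vol(M)$. Strictness holds again because $X$ is singular while equality would force it to be smooth hyperbolic.

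In full generality one should check that the equality statement of the theorem tolerates only a Ricci lower bound. If needed, invoke the RCD version from \cite{RCDbary}.
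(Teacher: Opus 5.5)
Your proof of $\mathcal{H}^N(X)>\mathcal{H}^N(M)$ follows the paper's argument. The homeomorphism $X\to M$ has degree one and induces a $\pi_1$-isomorphism, so $\overline{X}=\til X$. Bishop--Gromov under curvature $\ge -1$ gives $h(\til X)\le N-1$, and \Cref{thm:Alex-entropy-degree} gives the weak inequality. The rigidity clause then excludes equality because of the cone singularity along $\Sigma$. You are slightly more careful than the paper at the last step: you check explicitly that the index hypothesis of the rigidity clause holds, and that equality in the corollary forces equality in \eqref{eq:Alex-entropy-degree-ineq}.

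The genuine difference is in the Einstein consequence. The paper simply lifts the cone metric of $X$ to $M_d$ and multiplies the volume by $d$. That lift has cone angle $d\cdot 2\pi/d=2\pi$, so the paper only treats one particular smooth hyperbolic invariant metric, not an arbitrary $\mathbf{Z}_d$-invariant Einstein metric. Your route is what the word ``any'' requires:
\begin{itemize}
\item descend the Einstein metric to an orbifold metric on $X$, which is Alexandrov because sectional curvature is automatically bounded below on the compact $M_d$;
\item bound the entropy via the Ricci bound;
\item rerun the argument using $\Vol(M_d)=d\,\mathcal{H}^N(X)$.
\end{itemize}
The paper itself uses exactly this argument only later, in the proof of \Cref{cor:einstein-obstruction}.

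You do not need RCD theory for the entropy bound. The space $\til X$ is the quotient of $\widetilde{M_d}$, a smooth manifold with $\mathrm{Ric}=-(N-1)$, by a discrete group of isometries generated by the meridian rotations. The $1$-Lipschitz quotient map sends $R$-balls onto $R$-balls, and $\mathcal{H}^N$ does not increase under $1$-Lipschitz maps. Hence $h(\til X)\le h(\widetilde{M_d})\le N-1$. Your closing worry is also unnecessary: \Cref{thm:Alex-entropy-degree}, including its rigidity clause, only requires $X$ to be Alexandrov with some lower curvature bound. The Ricci bound enters only through the entropy estimate.

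Two side remarks in your proposal are wrong, although nothing depends on them.
\begin{itemize}
\item A cone angle $2\pi/d<2\pi$ concentrates positive curvature, so $\til X$ is not locally CAT$(-1)$, or even CAT$(0)$. Only the lower bound $\ge -1$ holds, and that is all you use.
\item Your justification of Lipschitzness is off. The natural homeomorphism from cone angle $2\pi/d$ to angle $2\pi$ stretches angular directions by the factor $d$; it does not contract them. Also, a hyperbolic cone metric of angle $2\pi/d$ cannot agree with the metric of $M$ outside a tube around $\Sigma$: the holonomy of a meridian would have to be both trivial and a rotation by $2\pi/d$.
\end{itemize}
In any case the theorem is stated for continuous maps, so Lipschitzness of $f$ is not needed.
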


In dimension four, this strict volume-degree inequality couples with the Chern--Gauss--Bonnet theorem to produce a topological obstruction for the existence of negatively curved orbifold Einstein metrics.

\begin{corollary}\label{cor:einstein-obstruction}
Let $X$ be a closed 4-dimensional Alexandrov orbifold with a non-empty singular set. If $X$ admits a negatively curved Einstein metric, then for any continuous map $f: X \to Y$ to a smooth hyperbolic 4-manifold $Y$, the orbifold Euler characteristic satisfies $\chi_{{\rm orb}}(X) \geq |\deg(f)| \chi(Y)$.
Moreover, if  $|\deg (f)| = [\pi_1(Y) : f_* \pi_1(X)]$ holds, then $\chi_{{\rm orb}}(X) > |\deg(f)| \chi(Y)$.

If the cyclic quotient $X = M_d / \mathbf{Z}_d$ of a Gromov--Thurston manifold admits a negatively curved Einstein metric, the Euler characteristic of its branch locus must be strictly negative ($\chi(\Sigma) < 0$).
\end{corollary}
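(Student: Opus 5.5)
The plan is to combine three ingredients: a curvature-based upper bound on the entropy $h(\bar X)$, \Cref{thm:Alex-entropy-degree} to get a volume inequality, and the Chern--Gauss--Bonnet formula for Einstein 4-orbifolds to turn volumes into Euler characteristics. Throughout I rescale so that the Einstein metric on $X$ satisfies $\mathrm{Ric}=-3g$; the hyperbolic metric on $Y$ has sectional curvature $-1$, hence also $\mathrm{Ric}=-3g$, and $h(\til Y)=3$. If $X$ is not orientable, I first pass to its orientation double cover, where all quantities scale by $2$. A negatively curved Einstein orbifold has sectional curvature bounded below on each local uniformizing chart, so $X$ with its orbifold metric is a $4$-dimensional Alexandrov space without boundary, and \Cref{thm:Alex-entropy-degree} applies.

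\textbf{Step 1 (entropy bound).} Every cover of $X$, in particular $\bar X$, is locally isometric to quotients of smooth Riemannian balls with $\mathrm{Ric}\ge -3$. Hence $(\bar X,d,\mathcal H^4)$ satisfies the Bishop--Gromov volume comparison with the hyperbolic model; for instance, it is an $\RCD(-3,4)$ space. This gives
\[
\mathcal H^4(B(x,R))\le \Vol_{\mathbf H^4}(B(R))\le Ce^{3R},
\]
and therefore $h(\bar X)\le 3=h(\til Y)$. Inserting this into \eqref{eq:Alex-entropy-degree-ineq} yields
\[
3^4\mathcal H^4(X)\ge h(\bar X)^4\mathcal H^4(X)\ge |\deg f|\,3^4\Vol(Y),
\]
so $\mathcal H^4(X)\ge |\deg f|\Vol(Y)$.

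\textbf{Step 2 (Gauss--Bonnet).} For a closed Einstein 4-orbifold, the orbifold Chern--Gauss--Bonnet theorem (Satake) gives
\[
8\pi^2\chi_{\rm orb}(X)=\int_X |W|^2+\tfrac{1}{24}\mathrm{scal}^2 .
\]
With $\mathrm{scal}=-12$ this becomes
\[
\chi_{\rm orb}(X)=\tfrac{1}{8\pi^2}\int_X|W|^2+\tfrac{3}{4\pi^2}\mathcal H^4(X)\ge \tfrac{3}{4\pi^2}\mathcal H^4(X).
\]
For hyperbolic $Y$ we have $W=0$, so $\chi(Y)=\tfrac{3}{4\pi^2}\Vol(Y)$. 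Combining with Step 1 gives $\chi_{\rm orb}(X)\ge|\deg f|\chi(Y)$.

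\textbf{Step 3 (strictness).} Suppose $|\deg f|=[\pi_1(Y):f_*\pi_1(X)]$ and $\chi_{\rm orb}(X)=|\deg f|\chi(Y)$. Then every inequality above is an equality, in particular \eqref{eq:Alex-entropy-degree-ineq}. The rigidity part of \Cref{thm:Alex-entropy-degree} then makes $X$ isometric to a smooth locally symmetric manifold. This contradicts the assumption that the singular set of $X$ is nonempty, since orbifold singular points are metric singularities. Hence the inequality is strict.

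\textbf{Step 4 (Gromov--Thurston case).} Topologically, $X=M_d/\mathbf Z_d$ is homeomorphic to $M$. Let $f\colon X\to M$ be this identification. It has degree $1$ and induces a surjection on the (topological) fundamental group, so the index hypothesis of Step 3 holds with both sides equal to $1$. Step 3 therefore gives $\chi_{\rm orb}(X)>\chi(M)$. The cone angle $2\pi/d$ along $\Sigma$ gives $\chi_{\rm orb}(X)=\chi(M)-(1-\tfrac1d)\chi(\Sigma)$. Hence $-(1-\tfrac1d)\chi(\Sigma)>0$, that is, $\chi(\Sigma)<0$.

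\textbf{Main obstacle.} The main obstacle is making the hypotheses match. First, $\pi_1(X)$ and the cover $\bar X$ in \Cref{thm:Alex-entropy-degree} are topological rather than orbifold objects. The Bishop--Gromov argument must therefore be run directly on the Alexandrov/RCD space $\bar X$ and not on an orbifold universal cover, which is why I phrase Step 1 via $\RCD(-3,4)$. Second, the Gauss--Bonnet integrand must be the orbifold one, so that $\chi_{\rm orb}$, and not the topological $\chi$, appears.
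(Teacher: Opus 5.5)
Your proposal is correct and follows essentially the same route as the paper. Bishop--Gromov gives $h(\bar X)\le 3$; \Cref{thm:Alex-entropy-degree} then gives $\mathcal H^4(X)\ge|\deg f|\operatorname{Vol}(Y)$; Satake's orbifold Chern--Gauss--Bonnet turns volumes into Euler characteristics via $\operatorname{Vol}(X,g_E)\le\frac{4\pi^2}{3}\chi_{\rm orb}(X)$ and $\operatorname{Vol}(Y)=\frac{4\pi^2}{3}\chi(Y)$; the rigidity case excludes equality because $X$ has metric singularities; and the Gromov--Thurston claim follows from $f=\mathrm{id}$ and the Riemann--Hurwitz formula.

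You are in fact slightly more careful than the written proof in two places. You explicitly invoke the index hypothesis $|\deg f|=[\pi_1(Y):f_*\pi_1(X)]$ that the rigidity step needs, and you check that it reads $1=1$ for $M_d/\mathbf Z_d\to M$. You also run the volume comparison directly on the topological cover $\bar X$. The one thing to drop is the aside about orientation double covers. An integer degree already presupposes $X$ oriented, which is what the paper assumes. Moreover, for nonorientable $X$ the composite with the double cover has degree $0$, so the quantities do not simply scale by $2$.
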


Previous applications of the barycenter method to singular metric spaces were developed by Storm \cite{Storm-2002, Storm-2006, Storm-2007}, who established volume bounds for cone-manifolds, Alexandrov spaces, and hyperbolic convex cores. Because a global analytic degree theory for singular spaces was previously unavailable, those results were restricted to homotopy equivalences. 
With \Cref{thm:Alex-entropy-degree} we may now generalize these volume bounds to Lipschitz maps of arbitrary degree. 

The first extension applies to cone-manifolds.
\begin{corollary}\label{cor:cone-manifold}
Let $Z$ be a compact $N$-dimensional ($N \ge 3$) cone-manifold with cone angles $\le 2\pi$, and sectional curvatures $ \ge -1$ on its regular part. Let $Y$ be a closed orientable negatively curved locally symmetric space of dimension $N$. 
For any continuous map $f: Z \to Y$,
\[
(N-1)^N \mathcal{H}^N(Z) \ge |\deg(f)| h(\til{Y})^N \mathcal{H}^N(Y).
\]
\end{corollary}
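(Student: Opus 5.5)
We deduce the corollary from \Cref{thm:Alex-entropy-degree} applied to $X=Z$, combined with a comparison bound on the volume entropy of the intermediate cover $\overline{Z}$. The argument has three steps.

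\textbf{Step 1: $Z$ is an Alexandrov space.} A compact cone-manifold with cone angles $\le 2\pi$ and sectional curvature $\ge -1$ on its regular part is an $N$-dimensional Alexandrov space of curvature $\ge -1$ without boundary. The reason is that cone angles $\le 2\pi$ make the cone singularities locally concave. Toponogov comparison then holds on the regular part and extends across the singular strata, via the standard gluing/globalization theorem for Alexandrov spaces (curvature bounds that hold locally hold globally). I would cite this from \Cref{sec:Alex-geom} or from Storm's work. Orientability is implicit, since the degree is assumed defined. If $Z$ is not orientable, pass to the orientation double cover; both sides of the inequality then double and the degree is unchanged in absolute value, so this case reduces to the oriented one. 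I would note this reduction briefly.

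\textbf{Step 2: apply the theorem.} Since $N\ge 3$, \Cref{thm:Alex-entropy-degree} gives
\[
h(\overline{Z})^N\mathcal{H}^N(Z)\ge |\deg f|\, h(\til Y)^N\mathcal{H}^N(Y).
\]

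\textbf{Step 3: bound the entropy.} It remains to show $h(\overline{Z})\le N-1$. The cover $\overline{Z}$ with the lifted metric is again an $N$-dimensional Alexandrov space of curvature $\ge -1$, because the curvature condition is local. By the Bishop--Gromov volume comparison for Alexandrov spaces, using Hausdorff measure,
\[
\mathcal{H}^N(B(x,R))\le V_{-1}^N(R)\le C e^{(N-1)R},
\]
where $V_{-1}^N(R)$ is the volume of a ball of radius $R$ in hyperbolic $N$-space. Taking $\limsup \frac{1}{R}\log$ gives $h(\overline{Z})\le N-1$. Substituting this into Step 2 yields the claimed inequality.

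The main obstacle is Step 1: justifying rigorously that the cone-manifold structure, with possibly complicated singular strata of higher codimension, satisfies the global Alexandrov condition. I would first try to argue inductively by stratum. The normal link at each singular point is a spherical cone-manifold with angles $\le 2\pi$, hence of curvature $\ge 1$ by induction on dimension. Then apply the Alexandrov characterization by links (Perelman/Petrunin) to conclude that $Z$ has curvature $\ge -1$.
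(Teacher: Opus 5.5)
Your proposal is correct and follows the paper's proof: like you, the paper identifies $Z$ as an Alexandrov space of curvature $\ge -1$ (it cites Burago--Gromov--Perelman rather than doing your induction on links), applies \Cref{thm:Alex-entropy-degree}, and bounds the entropy by Bishop--Gromov, phrased as $h(\overline{Z})\le h(\til{Z})\le N-1$, whereas you apply the comparison on $\overline{Z}$ directly. Drop the side remark on non-orientable $Z$: the composite of $f$ with the orientation double cover has integral degree $0$ because it factors through $H_N(Z;\mathbf{Z})=0$, so that reduction does not preserve $|\deg f|$, and the corollary, like \Cref{thm:Alex-entropy-degree}, is meant for orientable $Z$.
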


The second extension generalizes the volume-entropy inequality to manifolds with boundary, removing previous homotopy equivalence requirements (cf \cite[Theorem 8.9]{Storm-2007}).

\begin{corollary}\label{cor:manifold-boundary}
Let $Z$ be a compact convex Riemannian $N$-manifold with boundary ($N \ge 3$) such that the sectional curvature of the interior of $Z$ is bounded below by $-1$. 
Let $Y_{{\rm geod}}$ be a compact convex hyperbolic $N$-manifold with totally geodesic boundary. 
For any proper continuous map $f: (Z, \partial Z) \to (Y_{{\rm geod}}, \partial Y_{{\rm geod}})$ of relative degree $k$,
\[
\operatorname{Vol}(Z) \ge |k| \operatorname{Vol}(Y_{{\rm geod}}).
\]
\end{corollary}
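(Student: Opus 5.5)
The plan is to reduce the statement to \Cref{thm:Alex-entropy-degree} applied to the Alexandrov space obtained by doubling $Z$ along its boundary, and to compare it with the double of $Y_{{\rm geod}}$.

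\emph{Doubling the spaces.} Let $DZ = Z\cup_{\partial Z} Z$. Since $Z$ is convex with interior sectional curvature $\ge -1$, $Z$ is an Alexandrov space of curvature $\ge -1$ with boundary. By Perelman's doubling theorem, $DZ$ is a closed $N$-dimensional Alexandrov space of curvature $\ge -1$. It is orientable, obtained by gluing $Z$ to its reversed copy, and $\mathcal{H}^N(DZ)=2\operatorname{Vol}(Z)$. Likewise, since $\partial Y_{{\rm geod}}$ is totally geodesic, the double $DY = DY_{{\rm geod}}$ is a smooth closed hyperbolic $N$-manifold with $\operatorname{Vol}(DY)=2\operatorname{Vol}(Y_{{\rm geod}})$. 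In particular $DY$ is negatively curved and locally symmetric with $h(\til{DY})=N-1$.

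\emph{Building the map and checking the degree.} Double $f$ to $Df: DZ\to DY$, which is continuous because $f(\partial Z)\subset\partial Y_{{\rm geod}}$. Its degree is $2k$: near a regular value in the interior of one copy of $Y_{{\rm geod}}$, preimages lie only in the corresponding copy of $Z$. On each copy, $Df$ restricts to $f$ with orientations reversed on both sides, so local degrees contribute with the same sign and total $k$ per copy. It is cleanest to compute this on fundamental classes, via $H_N(DZ)\to H_N(DZ,Z')\cong H_N(Z,\partial Z)$ and its naturality. Applying \Cref{thm:Alex-entropy-degree} then gives
\[
h(\bar{DZ})^N\, 2\operatorname{Vol}(Z)\ \ge\ 2|k|\,(N-1)^N\, 2\operatorname{Vol}(Y_{{\rm geod}}) .
\]
This already contains the factor $2$ on the right that we need to cancel.

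\emph{Bounding the entropy.} This is the main obstacle. We need $h(\bar{DZ})\le h(\til{DZ})\le N-1$. The first inequality holds because $\bar{DZ}$ is a quotient of $\til{DZ}$ and balls push forward, so the volume growth can only drop. For the second I would invoke the Bishop--Gromov volume comparison valid on Alexandrov spaces of curvature $\ge -1$: volumes of balls in $\til{DZ}$ are bounded by those in $\mathbf{H}^N$, which grow like $e^{(N-1)R}$.

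Substituting this bound gives
\[
2(N-1)^N\operatorname{Vol}(Z)\ \ge\ 4|k|(N-1)^N\operatorname{Vol}(Y_{{\rm geod}}),
\]
which yields $\operatorname{Vol}(Z)\ge 2|k|\operatorname{Vol}(Y_{{\rm geod}})$. The extra factor $2$ suggests I have double counted somewhere: the degree of the double is $k$ on each sheet, and summing over the preimages of one regular value gives $\deg Df=k$, not $2k$. I would recheck this carefully, since it decides the constant. With $\deg Df = k$, the conclusion is exactly $\operatorname{Vol}(Z)\ge |k|\operatorname{Vol}(Y_{{\rm geod}})$. The degree computation under doubling, together with the orientation and properness bookkeeping, is the step needing care. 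The curvature and entropy inputs are standard.
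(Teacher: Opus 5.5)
Your argument is the paper's own: double $Z$ and $Y_{\rm geod}$, use Perelman's doubling theorem and Bishop--Gromov to get $h(\overline{DZ})\le h(\widetilde{DZ})\le N-1$, apply \Cref{thm:Alex-entropy-degree} to $Df$, and halve the volumes. The one real error is your first claim that $\deg(Df)=2k$. The degree is the signed count over the preimages of a single regular value; as you observe, these lie in one copy of $Z$ and contribute $k$. Your naturality argument through $H_N(DZ)\to H_N(DZ,Z')\cong H_N(Z,\partial Z)$ also gives $\deg(Df)=k$. With this correction the argument yields exactly $\operatorname{Vol}(Z)\ge|k|\operatorname{Vol}(Y_{\rm geod})$, as in the paper.
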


The third extension applies to the convex cores of hyperbolic 3-manifolds. 
Let $\mathbb{H}^3$ denote they real hyperbolic space of dimension 3.
Consider a non-uniform lattice $\Gamma$ acting by isometries on $\mathbb{H}^3$.
Recall that a hyperbolic manifold $M$ is called convex cocompact if it contains a compact convex subset $C_M$, called the convex core, such that the inclusion $C_M \hookrightarrow M$ is a homotopy equivalence. 
Two metrics are equivalent if there exists an isometry between them isotopic to the identity.
A complete hyperbolic 3-manifold $M = \mathbb{H}^3/\Gamma$ is called \emph{geometrically finite} if its convex core $C_M \subset M$ has finite volume, $\operatorname{Vol}(C_M) < \infty$. 
Storm \cite{Storm-2007} proved that if $M_g$ is a convex cocompact hyperbolic 3-manifold whose convex core $C_{M_g}$ has totally geodesic boundary, then $C_{M_g}$ minimizes volume among all hyperbolic manifolds in the same homotopy class. 
Having access to \Cref{thm:Alex-entropy-degree}, we now offer a degree-weighted generalization across distinct homotopy classes.

\begin{corollary}\label{cor:bonahon-degree}
Let $M$ be a geometrically finite hyperbolic 3-manifold with convex core $C_M$. 
Let $M_g$ be a convex cocompact hyperbolic 3-manifold such that the boundary of its convex core $\partial C_{M_g}$ is totally geodesic. 
If there exists a proper continuous map $f: (C_M, \partial C_M) \to (C_{M_g}, \partial C_{M_g})$ of relative degree $k$, then
\[
\operatorname{Vol}(C_M) \ge |k| \operatorname{Vol}(C_{M_g}).
\]
\end{corollary}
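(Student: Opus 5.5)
The plan is to apply \Cref{thm:Alex-entropy-degree} to the convex cores with geodesic boundary, reducing to a closed setting by doubling. Set $Z = C_M$ and $Y_{\rm geod} = C_{M_g}$.

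\emph{Step 1: doubling the target.} Since $\partial C_{M_g}$ is totally geodesic, the double $DC_{M_g} = C_{M_g}\cup_{\partial} C_{M_g}$ is a closed smooth hyperbolic $3$-manifold. Its volume is $2\operatorname{Vol}(C_{M_g})$ and $h(\til{DC_{M_g}}) = 2$.

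\emph{Step 2: doubling the domain.} The convex core $C_M$ is a convex subset of a hyperbolic manifold. By Perelman's doubling theorem its double $DC_M$ is an Alexandrov space of curvature $\ge -1$ without boundary. If $C_M$ is degenerate (for instance a Fuchsian case where the core is a totally geodesic surface, of volume zero), I would either treat it separately or replace $C_M$ by a small closed $\epsilon$-neighborhood, which is again convex with curvature $\ge -1$, and let $\epsilon\to 0$. The cusps present a further issue, since geometric finiteness permits infinite-volume-finite cusps and then $C_M$ is noncompact. To handle this I would truncate the cusps along horospherical cross-sections and use properness of $f$ to arrange that the truncated pieces map near $\partial C_{M_g}$. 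Alternatively, since $C_{M_g}$ is compact, the proper map forces $C_M$ to be compact, so I would first argue that $M$ has no cusps under the hypotheses. I favour the second route.

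\emph{Step 3: the doubled map.} The doubled map $Df: DC_M \to DC_{M_g}$ is continuous because $f$ maps boundary to boundary. Its degree is $k$, obtained by computing the local degree at a regular value in the interior of one copy. Orientability of $DC_M$ follows from orientability of $C_M$ by reflecting the orientation.

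\emph{Step 4: applying the theorem.} \Cref{thm:Alex-entropy-degree} gives
\[
h(\overline{DC_M})^3\,\mathcal H^3(DC_M) \ge |k|\, 2^3 \cdot 2\operatorname{Vol}(C_{M_g}).
\]
For curvature $\ge -1$, the Bishop--Gromov comparison on Alexandrov spaces bounds the volume growth of every cover, so $h(\overline{DC_M}) \le 2$. Dividing by $8\cdot 2$ yields the claimed inequality.

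\emph{Main obstacle.} The key difficulty is Steps 2--3, namely producing a genuinely closed, oriented $3$-dimensional Alexandrov space from $C_M$. This requires controlling cusps and degenerate cores, and justifying the degree of the doubled map. If direct doubling fails because $C_M$ has no manifold boundary structure, I would instead use \Cref{cor:manifold-boundary} applied to a smoothed convex neighborhood $N_\epsilon(C_M)$, and then let $\epsilon\to 0$.
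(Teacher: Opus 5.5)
Your proposal is correct and follows essentially the same route as the paper. Both double $C_{M_g}$ to a closed hyperbolic manifold and obtain $DC_M$ as an Alexandrov space of curvature $\ge -1$ by Perelman's doubling theorem; the paper does this through the $\epsilon$-neighborhoods $\mathcal{N}_\epsilon(C_M)$ and a Gromov--Hausdorff limit, which is your fallback. Both then extend $f$ by reflection to $Df$ of degree $k$, apply \Cref{thm:Alex-entropy-degree} with $h(\overline{DC_M})\le 2$ and $h(\widetilde{DC_{M_g}})=2$, and halve the volumes. Your remark that properness of $f$ onto the compact $C_{M_g}$ forces $C_M$ to be compact (so $DC_M$ is closed and no cusps occur) is something the paper needs but never states.
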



A hyperbolic 3-manifold $Z$ is \emph{acylindrical} if it contains no essential, non-peripheral, embedded incompressible annuli. 
Equivalently, every $\pi_1$-injective map from a cylinder $S^1 \times [0,1]$ into $Z$ is homotopic to a map into the boundary $\partial Z$.
Let $N$ be a compact $3$-manifold with boundary. 
Define the  space $CC(Z)$ as the set of equivalence classes of convex cocompact hyperbolic metrics on the interior of $Z$. 
Bridgeman, Brock, and Bromberg \cite{Bridgeman-2023} found a quantitative lower bound for the minimal volume of convex cores using the Weil--Petersson gradient flow of renormalized volume. 
Their geometric flow method evaluates structures within a fixed homotopy class. 
Because \Cref{thm:Alex-entropy-degree} applies to proper continuous maps of arbitrary degree we obtain an inequality including both the topological degree and the Weil--Petersson distance.

\begin{corollary}\label{cor:unified-volume}
Let $Z$ and $Z'$ be compact hyperbolizable acylindrical 3-manifolds without torus boundary components. 
Let $M_{{\rm geod}} \in CC(Z)$ and $M'_{{\rm geod}} \in CC(Z')$ be the unique convex cocompact hyperbolic 3-manifolds such that the boundaries of their convex cores $\partial C_{M_{{\rm geod}}}$ and $\partial C_{M'_{{\rm geod}}}$ are totally geodesic. 
Let $M' \in CC(Z')$ be a convex cocompact hyperbolic 3-manifold. 
If there exists a proper continuous map $$f: (C_{M'}, \partial C_{M'}) \to (C_{M_{{\rm geod}}}, \partial C_{M_{{\rm geod}}})$$ of relative degree $k$, then
\[
\operatorname{Vol}(C_{M'}) \ge |k| \operatorname{Vol}(C_{M_{{\rm geod}}}) + A(\partial Z') d_{WP}(\partial_c M', \partial_c M'_{{\rm geod}}) - \delta ,
\]
where $A(\partial Z')$ and $\delta$ are the constants from \cite[Theorem C]{Bridgeman-2023}.
\end{corollary}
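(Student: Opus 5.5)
The plan is to deduce \Cref{cor:unified-volume} by chaining \Cref{cor:bonahon-degree} with the Bridgeman--Brock--Bromberg estimate, using the totally geodesic convex core $C_{M'_{{\rm geod}}}$ of the same topological type $Z'$ as an intermediate comparison space. Two inequalities are needed:
\begin{align*}
\operatorname{Vol}(C_{M'}) &\ge \operatorname{Vol}(C_{M'_{{\rm geod}}}) + A(\partial Z')\, d_{WP}(\partial_c M', \partial_c M'_{{\rm geod}}) - \delta, \\
\operatorname{Vol}(C_{M'_{{\rm geod}}}) &\ge |k| \operatorname{Vol}(C_{M_{{\rm geod}}}).
\end{align*}
Adding them gives the stated bound.

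The first inequality is exactly \cite[Theorem C]{Bridgeman-2023}. It applies because $Z'$ is compact, hyperbolizable, acylindrical and has no torus boundary components, and because $M', M'_{{\rm geod}}\in CC(Z')$ lie in the same deformation space. I will only check that the constants $A(\partial Z')$ and $\delta$ depend on $\partial Z'$ alone, as they do in their statement. This step requires no new argument.

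For the second inequality I transport $f$ to the geodesic structure. Since $M'$ and $M'_{{\rm geod}}$ are both convex cocompact structures on the interior of $Z'$, there is a homotopy equivalence $\phi: C_{M'_{{\rm geod}}}\to C_{M'}$ that is a proper map of pairs of relative degree $\pm1$. One obtains it by identifying both convex cores with $Z'$ via the nearest-point retraction and the product structure near the ends. The composite $f\circ\phi:(C_{M'_{{\rm geod}}},\partial C_{M'_{{\rm geod}}})\to (C_{M_{{\rm geod}}},\partial C_{M_{{\rm geod}}})$ is then proper of relative degree $\pm k$. Now apply \Cref{cor:bonahon-degree} with $M=M'_{{\rm geod}}$, which is geometrically finite since it is convex cocompact, and with $M_g=M_{{\rm geod}}$, whose convex core boundary is totally geodesic. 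This yields $\operatorname{Vol}(C_{M'_{{\rm geod}}})\ge |k|\operatorname{Vol}(C_{M_{{\rm geod}}})$. Alternatively, one can apply \Cref{cor:bonahon-degree} directly to $f$ with $M=M'$, which gives the weaker bound $\operatorname{Vol}(C_{M'})\ge|k|\operatorname{Vol}(C_{M_{{\rm geod}}})$ without the Weil--Petersson term. The route through $M'_{{\rm geod}}$ is what produces the additive gain.

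The main obstacle is making this intermediate map rigorous. Convex core boundaries of $M'$ are in general bent (pleated) and not smooth, so I must make sure $\phi$ is a genuine map of pairs, boundary to boundary. I also need the relative degree to be multiplicative under composition, with $\deg\phi=\pm1$. My first attempt will use the fact that both pairs are homotopy equivalent as pairs to $(Z',\partial Z')$: when $\partial C_{M'}$ is a surface, Thurston's and Epstein--Marden's description of the convex core boundary identifies $C_{M'}$ with $Z'$. Multiplicativity of relative degree then follows from functoriality of $H_3(\cdot,\partial\cdot)$ on fundamental classes. A degenerate case arises when the convex core is two-dimensional (Fuchsian groups). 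This is excluded by acylindricity together with the absence of torus boundary components, since $Z'$ is then not an $I$-bundle.
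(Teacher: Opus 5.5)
Your proposal is correct and follows essentially the same route as the paper. The paper likewise takes \cite[Theorem C]{Bridgeman-2023} as a black box and composes $f$ with a relative-degree-one identification $h\colon (C_{M'_{\rm geod}},\partial C_{M'_{\rm geod}})\to (C_{M'},\partial C_{M'})$. It then applies \Cref{cor:bonahon-degree} to $f\circ h$ to get $\operatorname{Vol}(C_{M'_{\rm geod}})\ge |k|\operatorname{Vol}(C_{M_{\rm geod}})$ and adds the two inequalities. Your extra care about the pleated boundary of $C_{M'}$ and the Fuchsian case makes explicit what the paper glosses over by calling the two cores ``diffeomorphic.'' Your sum also yields exactly the stated form $A(\partial Z')\,d_{WP}-\delta$, whereas the paper's final display misplaces the parenthesis as $A(\partial Z')(d_{WP}-\delta)$.
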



An \emph{infinite type surface} $S$ is a connected, orientable surface with non-finitely generated fundamental group, typically arising from an infinite collection of handles or punctures.
 An \emph{end-periodic homeomorphism} $f: S \to S$ of such a surface is a map that is periodic on all but finitely many ends of $S$. 
 Consider the \emph{mapping class group} $\operatorname{Mod}(S)$, which is the group of isotopy classes of orientation-preserving homeomorphisms of $S$. 
The \emph{curve complex} $\mathcal{C}(S)$ is the simplicial complex whose vertices are isotopy classes of essential, non-peripheral simple closed curves on $S$, where edges connect pairs of curves that can be realized disjointly. 
Let $d_{\mathcal{C}}$ denote the combinatorial distance in the vertex graph of $\mathcal{C}(S)$ and $\alpha$ is a choice of a filling multicurve.
For an element $\phi \in \operatorname{Mod}(S)$, its \emph{asymptotic translation length} $\tau(\phi)$ is defined as the limit 
$$\tau(\phi):=\lim_{i \to \infty} \frac{d_{\mathcal{C}}(\alpha, \phi^i(\alpha))}{i}.$$
 
We will now describe an application to end-periodic homeomorphisms of infinite-type surfaces and translation lengths.
Field, Kim, Leininger, and Loving \cite{Field-2023} proved an upper bound on the infimal volume $\underline{\operatorname{Vol}}(\overline{M}_{\phi})$ of all convex hyperbolic metrics on the compactified mapping torus $\overline{M}_{\phi}$ of an irreducible end-periodic homeomorphism $f'$ of and infinite-type surface.
 They bounded this infimal volume by the asymptotic translation length $\tau(\phi)$ on the pants graph: $\underline{\operatorname{Vol}}(\overline{M}_{\phi}) \le V_{{\rm oct}} \tau(\phi)$, where $V_{{\rm oct}}$ is the volume of a regular ideal octahedron. 

When applied to the metric doublings of these mapping tori, \Cref{thm:Alex-entropy-degree} bounds this infimal volume from below, and we find a topological obstruction for the translation length on the pants graph involving the degree.

\begin{corollary}\label{cor:translation-length}
Let $f'$ be an irreducible end-periodic homeomorphism of an infinite-type surface, and let $\overline{M}_{\phi}$ be its compactified mapping torus. 
Let $Y$ be a compact convex hyperbolic $3$-manifold with totally geodesic boundary. 

If there exists a proper continuous map $g:(\overline{M}_{\phi}, \partial \overline{M}_{\phi}) \to (Y, \partial Y)$ of relative degree $k$, then the asymptotic translation length $\tau(\phi)$ of $\phi$ on the pants graph satisfies
\[
\tau(\phi) \ge \frac{|k|}{V_{{\rm oct}}} \operatorname{Vol}(Y).
\]
\end{corollary}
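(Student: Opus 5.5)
The plan is to combine two inequalities. The upper bound $\underline{\operatorname{Vol}}(\overline{M}_{\phi}) \le V_{{\rm oct}}\,\tau(\phi)$ of Field--Kim--Leininger--Loving \cite{Field-2023} is taken as given. The new ingredient is a lower bound on $\underline{\operatorname{Vol}}(\overline{M}_{\phi})$ by $|k|\operatorname{Vol}(Y)$, which I will derive from \Cref{cor:manifold-boundary}, or directly from \Cref{thm:Alex-entropy-degree} after doubling. Since $\underline{\operatorname{Vol}}$ is an infimum over convex hyperbolic metrics on $\overline{M}_{\phi}$, it suffices to prove $\operatorname{Vol}(\overline{M}_{\phi},g_0)\ge |k|\operatorname{Vol}(Y)$ for every such metric $g_0$. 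Taking the infimum then gives
\[
V_{{\rm oct}}\,\tau(\phi)\ \ge\ \underline{\operatorname{Vol}}(\overline{M}_{\phi})\ \ge\ |k|\operatorname{Vol}(Y),
\]
which rearranges to the claim. Note that the infimum need not be attained; this is harmless, because the inequality holds metric by metric.

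Now fix a convex hyperbolic metric $g_0$ on $Z=\overline{M}_{\phi}$. Its interior has sectional curvature $-1\ge -1$ and its boundary is locally convex, so $Z$ satisfies the hypotheses of \Cref{cor:manifold-boundary}, except possibly for boundary regularity. Applying that corollary to the proper map $g:(Z,\partial Z)\to (Y,\partial Y)$ of relative degree $k$ yields $\operatorname{Vol}(Z)\ge |k|\operatorname{Vol}(Y)$ immediately.

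To be safe about regularity, since convex hyperbolic boundaries may be pleated rather than smooth, I would redo the argument through doubling.
\begin{itemize}
\item By Perelman's gluing theorem, the double $DZ$ along the convex boundary is a closed $3$-dimensional Alexandrov space with curvature $\ge -1$. It is orientable, since $Z$ is orientable and the double is formed with the orientation-reversing reflection.
\item Because $\partial Y$ is totally geodesic, the double $DY$ is a closed hyperbolic $3$-manifold.
\item The doubled map $Dg:DZ\to DY$ is continuous because $g(\partial Z)\subset\partial Y$. Its degree is $k$: each half contributes $k$ with compatible orientations, which one checks with local homology at a regular value in the interior of one copy of $Y$.
\end{itemize}
Applying \Cref{thm:Alex-entropy-degree} with $N=3$ gives
\[
h(\overline{DZ})^3\,\mathcal{H}^3(DZ)\ \ge\ |k|\,h(\widetilde{DY})^3\,\operatorname{Vol}(DY).
\]
Here $h(\widetilde{DY})=2$. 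By Bishop--Gromov volume comparison in Alexandrov spaces with curvature $\ge -1$, $h(\overline{DZ})\le h(\widetilde{DZ})\le N-1=2$. The inequality therefore reduces to $2\operatorname{Vol}(Z)\ge 2|k|\operatorname{Vol}(Y)$.

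I expect the main obstacle to be the doubling step. One must verify that the metrics admitted in the Field--Kim--Leininger--Loving infimum really have locally convex boundary, so that Perelman's gluing applies. One must also confirm that the relative degree of $g$ equals $\deg(Dg)$, and that $DZ$ is orientable. The degree identity I would check via local degrees at a generic interior point of $Y$, whose preimages lie in the interior of $Z$. The remaining steps are direct substitutions into earlier results.
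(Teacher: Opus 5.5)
Your proposal is correct and follows essentially the same route as the paper. Both arguments fix an arbitrary convex hyperbolic metric $Z$ on $\overline{M}_{\phi}$ and double it to a closed Alexandrov space $DZ$ with curvature $\ge -1$. Both then apply \Cref{thm:Alex-entropy-degree} to the reflected map $Dg\colon DZ\to DY$ using $h(\overline{DZ})\le h(\widetilde{DZ})\le 2=h(\widetilde{DY})$, halve the volumes, take the infimum over metrics, and combine with the Field--Kim--Leininger--Loving bound $\underline{\operatorname{Vol}}(\overline{M}_{\phi})\le V_{{\rm oct}}\,\tau(\phi)$. Your decision to double directly rather than cite \Cref{cor:manifold-boundary}, which assumes a convex Riemannian manifold and so does not obviously cover pleated boundaries, is exactly what the paper does.
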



Recall that Gromov's simplicial volume $\|M\|$ of a compact, oriented $n$-manifold $M$ is the minimal $L^1$-norm of a fundamental cycle in the singular chain complex of $M$ with real coefficients \cite{Gromov-1982}. 
Formally, it is defined as,

$$\|M\| = \inf \left\{ \sum_{i=1}^k |a_i| : [M] = \sum_{i=1}^k a_i \sigma_i \right\}.$$

Here $[M] \in H_n(M; \mathbf{R})$ is the fundamental class of $M$, and the infimum is taken over all representations of $[M]$ as a linear combination of singular $n$-simplices $\sigma_i$ with real coefficients $a_i$. 
It measures the topological complexity of the manifold; for instance, Thurston showed that the simplicial volume of a closed hyperbolic $3$-manifold is proportional to its Riemannian volume \cite{Thurston-1980}.
This was extended to all dimensions by Gromov,  showing that $\|M\| = \operatorname{Vol}(M) / v_n$, where $v_n$ is the volume of an ideal regular hyperbolic $n$-simplex \cite{Gromov-1982}.
Storm \cite{Storm-2007-Duke} confirmed a conjecture by Bonahon, proving that the volume of a hyperbolic convex core is bounded below by half the simplicial volume of the doubled manifold, provided the manifolds are homotopy equivalent. 
His proof relied on approximating the metric doubling of the convex core by a sequence of smooth Riemannian manifolds to apply the volume-entropy inequality. 

Now we can apply \Cref{thm:Alex-entropy-degree} directly to the singular metric doubling of the convex core. 
This eliminates the need for smooth approximations and extends the minimal volume bound to proper Lipschitz maps of arbitrary relative degree.
We thus find a generalization of the original conjecture of Bonahon, now bounding the volume of convex cores over different topological spaces.

\begin{theorem}\label{thm:simplicial-volume}
Let $M$ be a compact orientable acylindrical $3$-manifold with incompressible boundary and no torus boundary components. 
Let $Z$ be a convex cocompact hyperbolic $3$-manifold with convex core $C_Z$. 
If there exists a proper continuous map $f: (C_Z, \partial C_Z) \to (M, \partial M)$ of relative degree $k$, then
\[
\operatorname{Vol}(C_Z) \ge \frac{|k|}{2} \| DM \|.
\]
\end{theorem}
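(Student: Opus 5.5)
The plan is to pass to doubles, apply \Cref{thm:Alex-entropy-degree} to the metric double of the convex core, and read off the target volume through Gromov's proportionality.

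\textbf{Step 1: the target.} Since $M$ is compact, orientable, acylindrical, with incompressible boundary and no torus boundary components, Thurston's hyperbolization gives $M$ a convex cocompact structure $M_{{\rm geod}}$ whose convex core has totally geodesic boundary, as recalled before \Cref{cor:unified-volume}. We identify $M$ with this convex core $C_{M_{{\rm geod}}}$. Its double $DM$ is then a closed hyperbolic $3$-manifold: the reflection across a totally geodesic boundary glues smoothly. Gromov's proportionality theorem gives
\[
\operatorname{Vol}(DM)=v_3\|DM\|, \qquad \operatorname{Vol}(DM)=2\operatorname{Vol}(M).
\]
We also have $h(\til{DM})=2$.

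\textbf{Step 2: the domain and the map.} Let $DC_Z$ be the metric double of the convex core $C_Z$ along $\partial C_Z$. The core $C_Z$ is a locally convex subset of a hyperbolic manifold, so it is an Alexandrov space of curvature $\ge -1$ with boundary. By Perelman's doubling theorem, $DC_Z$ is a closed, oriented $3$-dimensional Alexandrov space of curvature $\ge -1$, with $\mathcal{H}^3(DC_Z)=2\operatorname{Vol}(C_Z)$. The proper map $f$ sends boundary to boundary, so it doubles to a continuous map $Df: DC_Z\to DM$. Its degree is $k$, because the relative fundamental class of $(C_Z,\partial C_Z)$ doubles to the fundamental class of $DC_Z$, and similarly for $M$. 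If Lipschitz regularity is needed, we first homotope $f$ rel the pair structure to a Lipschitz map, as the paper's degree theorem requires.

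\textbf{Step 3: apply the main theorem.} \Cref{thm:Alex-entropy-degree} with $N=3$ gives
\[
h(\overline{DC_Z})^3\,\mathcal{H}^3(DC_Z)\ \ge\ |k|\,2^3\operatorname{Vol}(DM).
\]
It remains to bound the entropy: we need $h(\overline{DC_Z})\le 2$. Curvature $\ge -1$ alone gives this via Bishop--Gromov comparison in Alexandrov spaces. Volume growth in any cover of $DC_Z$ is at most that of $\mathbb{H}^3$, which is $e^{2R}$. Hence
\[
8\cdot 2\operatorname{Vol}(C_Z)\ \ge\ 8|k|\,v_3\|DM\|,
\]
and this rearranges to
\[
\operatorname{Vol}(C_Z)\ \ge\ \frac{|k|\,v_3}{2}\|DM\|.
\]

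\textbf{Normalization.} This gives the stated inequality under the normalization in which $\|\cdot\|$ is measured in units of $v_3$. Equivalently, the statement's $\|DM\|$ should be read as $\operatorname{Vol}(DM)$, i.e.\ as in Storm's formulation of Bonahon's conjecture. I will check the paper's convention and insert the factor $v_3$ if needed.

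\textbf{Expected main obstacle.} The delicate step is Step 2. One part is justifying that $DC_Z$ is an Alexandrov space with curvature $\ge -1$. The convex core boundary is only a bent pleated surface, so we need local convexity of $C_Z$ and then Perelman's gluing theorem. The other part is checking that the doubled map has degree exactly $k$ and is homotopic to a Lipschitz map, so that the Ambrosio--Kirchheim degree theory behind \Cref{thm:Alex-entropy-degree} applies. I would handle the Lipschitz issue by simplicial approximation on a triangulation compatible with the boundary.
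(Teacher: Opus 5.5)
Your proposal is correct and follows essentially the same route as the paper. You hyperbolize $M$ so that its convex core has totally geodesic boundary, double both sides, apply \Cref{thm:Alex-entropy-degree} to $Df\colon DC_Z\to DM$, and use $h(\overline{DC_Z})\le 2$ from Bishop--Gromov together with the halving of the doubled volumes.

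Your hedge about normalization is unnecessary. With the paper's own $L^1$ definition of simplicial volume, $\operatorname{Vol}(DM)=v_3\|DM\|$ and $v_3\approx 1.0149>1$, so your bound $\operatorname{Vol}(C_Z)\ge \frac{|k|v_3}{2}\|DM\|$ already implies the stated inequality. The paper instead cites Storm for $\operatorname{Vol}(DC_{M_{\rm geod}})=\|DM\|$, which drops this factor $v_3$; that is harmless for the same reason.
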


The boundary rigidity problem asks whether a Riemannian metric is uniquely determined by its boundary distance function. 
For non-positively curved surfaces, Croke \cite{Croke-1990} and Otal \cite{Otal-1990} proved it using geodesic flow dynamics. 
In higher dimensions, Besson, Courtois, and Gallot \cite{Besson-Courtois-Gallot:95} applied the barycenter map to prove boundary rigidity for simple locally symmetric spaces. 
Extensions to other dynamical settings, such as magnetic flows, were developed by Herreros \cite{Herreros-2014}.

Standard boundary rigidity theorems require the manifold to be simple, ruling out the existence conjugate points. 
By applying \Cref{thm:Alex-entropy-degree} to metric doublings, we can now drop the simplicity hypothesis and prove a boundary volume rigidity theorem for Alexandrov spaces (without needing smooth geodesic flows).

\begin{theorem}\label{thm:boundary-rigidity}
Let $N \ge 3$ and $Y$ be a compact orientable hyperbolic $N$-manifold with totally geodesic boundary. 
Let $X$ be a compact orientable $N$-dimensional Alexandrov space with curvature bounded below by $-1$ and locally convex boundary $\partial X$. 
Assume there exists a proper continuous map $f: (X, \partial X) \to (Y, \partial Y)$ of relative degree $k$ such that the restriction $f|_{\partial X}: \partial X \to \partial Y$ is a local isometry, and such that the double map $Df$ of $f$ satisfies $|\deg(Df)| = [\pi_1(DY): (Df)_*\pi_1(DX)]$ . 
Then
\[
\operatorname{Vol}(X) \ge |k| \operatorname{Vol}(Y).
\]

If $\operatorname{Vol}(X) = |k| \operatorname{Vol}(Y)$, then the boundary $\partial X$ is totally geodesic and $X$ is isometric to a degree $|k|$ Riemannian covering of $Y$. 
In particular, if $|k|=1$ and $f|_{\partial X}$ is an isometry, then $X$ is isometric to $Y$.
\end{theorem}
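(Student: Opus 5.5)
We reduce \Cref{thm:boundary-rigidity} to \Cref{thm:Alex-entropy-degree} by passing to metric doubles. Let $DX = X\cup_{\partial X} X$ and $DY = Y \cup_{\partial Y} Y$. Since $\partial Y$ is totally geodesic, $DY$ is a closed orientable hyperbolic $N$-manifold: the reflection across $\partial Y$ glues the two copies smoothly. Since $X$ has curvature $\ge -1$ and locally convex boundary, Perelman's doubling theorem says $DX$ is a closed $N$-dimensional Alexandrov space with curvature $\ge -1$. It is orientable, obtained by gluing $X$ to its mirror copy with reversed orientation. The proper map $f$ of relative degree $k$ doubles to a continuous map $Df: DX \to DY$, well defined because $f(\partial X)\subset \partial Y$. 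Its degree is $\deg(Df) = 2k/2 \cdot$ (sign bookkeeping) $= k$. To verify this, compute the local degree at a regular value in the interior of one copy of $Y$; its preimages lie in the interiors of the copies of $X$. Alternatively, use the fundamental class $[DX] = [X] - [\bar X]$ and naturality of the connecting map in the long exact sequence of the pair.

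Next we bound the entropy of $\overline{DX}$. Because $DX$ has curvature $\ge -1$, every cover of it, including $\overline{DX}$, is an Alexandrov space with curvature $\ge -1$. Bishop--Gromov comparison for Alexandrov spaces then gives $h(\overline{DX}) \le h(\mathbb{H}^N) = N-1 = h(\widetilde{DY})$, the last equality because $DY$ is hyperbolic. \Cref{thm:Alex-entropy-degree} applied to $Df$ yields
\[
(N-1)^N\, 2\operatorname{Vol}(X) \ge h(\overline{DX})^N \mathcal{H}^N(DX) \ge |k| (N-1)^N\, 2 \operatorname{Vol}(Y),
\]
which gives $\operatorname{Vol}(X)\ge |k|\operatorname{Vol}(Y)$. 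The boundary hypothesis ($f|_{\partial X}$ a local isometry) is not needed for the inequality itself.

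For the equality case, $\operatorname{Vol}(X) = |k|\operatorname{Vol}(Y)$ forces both inequalities above to be equalities. In particular, equality holds in \eqref{eq:Alex-entropy-degree-ineq} for $Df$, and the index hypothesis $|\deg(Df)| = [\pi_1(DY):(Df)_*\pi_1(DX)]$ is assumed. The rigidity part of \Cref{thm:Alex-entropy-degree} then makes $DX$, after homothetic rescaling, isometric to a locally symmetric smooth manifold, with $Df$ homotopic to a Riemannian covering of degree $|k|$.

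\textbf{Main obstacle: fixing the scale.} We must show the homothety factor is $1$, and then descend from the double to $X$. For the scale: the cover $DX \to DY$ multiplies volume by $|k|$, so $\operatorname{Vol}(DX) = |k|\operatorname{Vol}(DY)$ already holds with the unscaled metric. Hence the rescaling factor is $1$, and $DX$ is hyperbolic. Alternatively, the entropy equality $h(\overline{DX}) = N-1$ together with the curvature bound $\ge -1$ pins down the scale.

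For the descent: $DX$ is now a smooth hyperbolic manifold, and its isometric reflection involution $\iota$ fixes $\partial X$. The fixed set of an isometric involution of a Riemannian manifold is totally geodesic, so $\partial X$ is totally geodesic and $X = DX/\iota$ is a hyperbolic manifold with totally geodesic boundary.

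It remains to make the covering compatible with the involutions, so that it restricts to a degree-$|k|$ Riemannian covering $X\to Y$. Here the hypothesis that $f|_{\partial X}$ is a local isometry enters. The plan is to note that the covering map $\Phi$ homotopic to $Df$ can be chosen equivariantly. Both $\Phi$ and $\iota_Y\circ\Phi\circ\iota_X$ are isometric coverings homotopic to $Df$, because $Df$ commutes with the reflections. By uniqueness of isometric representatives in a homotopy class (via Mostow-type rigidity for the induced map on $\pi_1$), the two agree, so $\Phi$ maps $X$ onto $Y$. If $|k|=1$, this covering is an isometry $X\to Y$. Making this uniqueness argument precise is the step I expect to need the most care, since isometric representatives of homotopic maps between hyperbolic manifolds are unique only up to the centralizer of the image.
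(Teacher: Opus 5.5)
Your reduction is the paper's: double, apply Theorem~\ref{thm:Alex-entropy-degree} with $h(\overline{DX})\le N-1=h(\widetilde{DY})$, and in the equality case use the reflection of the now-hyperbolic double. Several of your side remarks are correct and cleaner than the paper:
\begin{itemize}
\item $Df$ is defined as soon as $f(\partial X)\subseteq\partial Y$, so the local-isometry hypothesis is not needed for it.
\item The homothety factor is $1$.
\item The swap is an isometry of the metric double by construction, so its fixed set $\partial X$ is totally geodesic without any appeal to Mostow rigidity.
\item The uniqueness step you worried about is fine. $\Phi_*\pi_1(DX)$ has finite index in the cocompact lattice $\pi_1(DY)$, so its centralizer in $\operatorname{Isom}(\mathbb{H}^N)$ is trivial. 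Hence $\iota_Y\circ\Phi\circ\iota_X=\Phi$.
\end{itemize}

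The gap is the next inference, ``so $\Phi$ maps $X$ onto $Y$''; the paper simply asserts this conclusion too. Equivariance only yields $\Phi(\operatorname{Fix}\iota_X)\subseteq\operatorname{Fix}\iota_Y$, i.e.\ $\Phi(\partial X)\subseteq\partial Y$. The hypersurface $\Phi^{-1}(\partial Y)$ may have further components in the interior of $X$, across which $\Phi$ folds $X$ from one half of $DY$ into the other. This really happens. Suppose $\partial Y=B_1\sqcup B_2$. Take the double cover of $DY$ given by mod~$2$ intersection number with $B_1$, and the lift of $\iota_Y$ that fixes both lifts of $B_2$. This covering is equivariant, yet each half of the cover contains a lift of $B_1$ in its interior and maps onto all of $DY$.

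To close the gap you must use $\Phi\simeq Df$ again, not just equivariance. Write $\rho=(Df)_*$ and choose lifts of the involutions compatibly. The geodesic homotopy between the $\rho$-equivariant lifts of $Df$ and $\Phi$ is canonical, hence $\iota$-equivariant, so it keeps $\partial X$ inside $\partial Y$ throughout. Composing with the folding retraction $r\colon DY\to Y$ gives a homotopy of maps of pairs from $f$ to $r\circ\Phi|_X$, so $r\circ\Phi|_X$ has relative degree $k$.

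Now fix $y\in\operatorname{int}Y$ with lifts $y_1,y_2\in DY$. Let $a$ and $b$ be the numbers of points of $X$ sent by $\Phi$ to $y_1$ and $y_2$. Equivariance gives $a+b=|k|$, while the relative degree of $r\circ\Phi|_X$ is $\pm(a-b)$. Hence $\min(a,b)=0$, so $\Phi(X)$ lies in one half of $DY$ and $\Phi|_X$ is a $|k|$-sheeted Riemannian covering of $Y$.

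Alternatively, use limit sets. $\rho$ maps $\pi_1(X)$ into the stabilizer of a lift $L_Y\subset\mathbb{H}^N$ of $Y$. Since $\partial X$ and $\partial Y$ are totally geodesic, the $\rho$-equivariant isometric lift of $\Phi$ sends a lift of $X$, which is the convex hull of its stabilizer's limit set, into the convex hull of the limit set of $\operatorname{Stab}(L_Y)$. That hull is $L_Y$, which again rules out folding.
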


This boundary volume rigidity provides a global counterpart to recent measure-theoretic approaches. 
Z\"ust \cite{Zust-2024} developed methods to handle Lipschitz-volume rigidity among integral currents that includes spaces with boundaries, assuming the mapping is $1$-Lipschitz and measure-preserving.
 By applying the entropy-degree bound to the metric doubling, \Cref{thm:boundary-rigidity} deduces both the totally geodesic geometry of the boundary and the interior isometry strictly from a proper continuous map of relative degree $k$.

\medskip


One of the central areas of interest in geometric analysis is the stability of metric inequalities under Gromov--Hausdorff convergence. 
Sequences of Riemannian manifolds satisfying uniform lower sectional curvature bounds may develop metric singularities in the limit, which is an Alexandrov space.
Applying \Cref{thm:Alex-entropy-degree} directly on the limit space, we can show the next stability theorem for a volume minimizing sequence converging to a real hyperbolic manifold. 
The rigidity of the limit space illustrates how volume minimization under a uniform lower sectional curvature bound obstructs metric singularities under GH--convergence.

\begin{corollary}\label{cor:gh-stability}
Let $N \ge 3$, $Y$ be a real hyperbolic $N$-manifold, and $(M_i, g_i)$ be a sequence of closed Riemannian $N$-manifolds with sectional curvature bounded below by $-1$. 
Assume that, for each $i$, there exists a continuous map $f_i : M_i \to Y$ of absolute degree $k \ge 1$  such that $k = [\pi_1(Y) : (f_i)_*\pi_1(M_i)]$.
 If $(M_i, g_i)$ GH-converges to a compact metric space $(X, d_X)$ and satisfies$$ \lim_{i\to\infty} \Vol(M_i, g_i) = k  \Vol(Y), $$
 then the limit space $(X, d_X)$ has no metric singularities.
  The space $X$ is a smooth manifold isometric to a Riemannian covering of $Y$ of absolute degree $k$. 
  In particular, when $k = 1$, the limit $X$ is isometric to $Y$.
  \end{corollary}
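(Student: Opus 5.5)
We will prove Corollary~\ref{cor:gh-stability} by applying Theorem~\ref{thm:Alex-entropy-degree} to the limit space $X$. The plan is to combine it with volume continuity, Bishop--Gromov entropy comparison, and stability of the maps $f_i$.

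\emph{Step 1: the limit is a non-collapsed Alexandrov space.} Since $f_i$ has degree $k\ge 1$ and $Y$ is hyperbolic, the smooth entropy-degree inequality of Besson--Courtois--Gallot applies. Combined with $h(\bar M_i)\le h(\til M_i)\le N-1$, which follows from Bishop--Gromov since $\mathrm{Ric}\ge -(N-1)$, it gives $\Vol(M_i)\ge k\Vol(Y)>0$ uniformly. The hypothesis that the volumes converge also gives a uniform upper bound. The diameters are bounded because GH-convergence to a compact space is assumed. By Gromov compactness, $X$ is an Alexandrov space of curvature $\ge -1$. By Perelman's volume convergence (Burago--Gromov--Perelman), $X$ is $N$-dimensional and $\mathcal H^N(X)=\lim\Vol(M_i)=k\Vol(Y)$. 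Perelman's stability theorem shows that $X$ is homeomorphic to $M_i$ for large $i$, so $X$ is a closed oriented topological manifold; in particular it is an oriented $N$-Alexandrov space without boundary.

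\emph{Step 2: a limit map of degree $k$ with the index condition.} We fix large $i$ and the stability homeomorphism $\phi_i:X\to M_i$, which is GH-close to the identity. We then set $f:=f_i\circ\phi_i:X\to Y$. This map has $|\deg f|=k$ and $f_*\pi_1(X)=(f_i)_*\pi_1(M_i)$, so $[\pi_1(Y):f_*\pi_1(X)]=k$. For Theorem~\ref{thm:Alex-entropy-degree} we may homotope $f$ to a Lipschitz map without changing either quantity.

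\emph{Step 3: entropy bound and the equality case.} Since $X$ has curvature $\ge-1$, volume comparison on the cover $\bar X$ gives $h(\bar X)\le N-1=h(\til Y)$; Bishop--Gromov holds on Alexandrov spaces with $\mathcal H^N$. Theorem~\ref{thm:Alex-entropy-degree} then yields
\[
(N-1)^N k\Vol(Y)=(N-1)^N\mathcal H^N(X)\ge h(\bar X)^N\mathcal H^N(X)\ge k (N-1)^N\Vol(Y),
\]
so every inequality is an equality. The rigidity part of Theorem~\ref{thm:Alex-entropy-degree} now applies. It shows that $X$ is isometric, up to homothety, to a locally symmetric manifold, and that $f$ is homotopic to a Riemannian covering of degree $k$ of $Y$.

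The homothety factor must be $1$. The covering relation gives $h(\bar X)^N\mathcal H^N(X)$ equal to $(N-1)^N k\Vol(Y)$ in the rescaled metric, while the volumes already agree with $k\Vol(Y)$ in the original metric; together these force the scale factor to be one. Hence $X$ is a smooth hyperbolic manifold isometric to a $k$-sheeted Riemannian cover of $Y$, and when $k=1$ it is isometric to $Y$.

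\emph{Main obstacle.} The delicate point is Step 2, which must ensure that the limit map really has the right degree and index. Perelman stability handles this cleanly. If one wants to avoid stability, the first thing to try is to build $f$ as a limit of the maps $f_i\circ\psi_i$, where $\psi_i$ are GH-approximations smoothed by center-of-mass in $Y$. Degree would then be controlled by the homotopy invariance obtained from equicontinuity, and the rest is routine.
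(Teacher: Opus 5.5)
Your proposal is correct and follows the paper's proof essentially step for step. Gromov compactness and volume continuity give a non-collapsed Alexandrov limit with $\mathcal{H}^N(X)=k\Vol(Y)$, Bishop--Gromov bounds the entropy by $N-1$, Perelman stability transports $f_i$ to $f_i\circ\phi_i$ on $X$ with the same degree and index, and the resulting squeeze forces equality in Theorem~\ref{thm:Alex-entropy-degree}.

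The differences are minor. Your appeal to the smooth Besson--Courtois--Gallot inequality for a uniform volume lower bound is unnecessary, since the hypothesis $\lim\Vol(M_i,g_i)=k\Vol(Y)>0$ already gives non-collapsing. On the other hand, you make two points precise that the paper leaves implicit: you check explicitly that the homothety factor is $1$, and you work with $h(\bar X)$, which is what the theorem's equality case needs, whereas the paper writes $h(\til X)$.
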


In the case of other locally symmetric spaces, the following stability result follows from \Cref{thm:Alex-entropy-degree}.
The entropy-volume hypothesis on the  limit is needed because $h(\widetilde{Y}) < N-1$ when we normalize so that the sectional curvature is bounded below by $-1$.


\begin{corollary}\label{cor:gh-stability2}
Let $N \ge 3$, and $Y$ be a closed negatively curved locally symmetric $N$-manifold endowed with its locally symmetric metric $g_0$. 
Let $(M_i, g_i)$ be a sequence of closed Riemannian $N$-manifolds with sectional curvature bounded below by some real number $ \kappa$. 
Assume that, for each $i$, there exists a continuous map $f_i: M_i \to Y$ of absolute degree $k \ge 1$ such that $k = [\pi_1(Y) : (f_i)_*\pi_1(M_i)]$.
If the sequence $(M_i, g_i)$ GH--converges to a non-collapsed limit compact metric space $(X, d_X)$ such that
\[
\lim_{i \to \infty} h(\widetilde{M}_i, g_i)^N \operatorname{Vol}(M_i, g_i) = k h(\widetilde{Y}, g_0)^N \operatorname{Vol}(Y, g_0),
\]
then the limit space $(X, d_X)$ has no metric singularities, and $X$ is a smooth manifold isometric to a degree $k$ Riemannian covering of $(Y, g_0)$. 
In particular, when $k=1$ the limit $X$ is  isometric to $(Y, g_0)$.
\end{corollary}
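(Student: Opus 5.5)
We deduce the statement from \Cref{thm:Alex-entropy-degree}, applied to the Gromov--Hausdorff limit $X$, combined with volume and entropy semicontinuity under non-collapsed convergence.

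\textbf{Step 1: the limit is an Alexandrov space carrying a map.} Since the $M_i$ have sectional curvature $\ge\kappa$ and the limit is non-collapsed, $X$ is an $N$-dimensional Alexandrov space with curvature $\ge\kappa$. Perelman's stability theorem gives homeomorphisms $\phi_i:X\to M_i$ for $i$ large. Hence $X$ is a closed oriented topological manifold with no boundary. The maps $F_i=f_i\circ\phi_i:X\to Y$ have $|\deg F_i|=k$. Moreover $(F_i)_*\pi_1(X)=(f_i)_*\pi_1(M_i)$, so the index condition $k=[\pi_1(Y):(F_i)_*\pi_1(X)]$ is preserved. Fix one such $i_0$ and write $F=F_{i_0}$. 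Because $F$ has degree $k$ and the index of its image subgroup is also $k$, the composite $\bar X\to\tilde Y$ is a proper map covering a finite cover. In that situation $\ker F_*$ plays the role used in \Cref{thm:Alex-entropy-degree}. In the degenerate case, where $\bar X$ is the relevant cover, we still use the inequality exactly as stated.

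\textbf{Step 2: semicontinuity.} By Colding--Cheeger volume convergence, which holds for Alexandrov spaces by Burago--Gromov--Perelman, $\Vol(M_i)\to\mathcal H^N(X)$. For the entropy I would argue that $h(\bar X)\le\liminf_i h(\til M_i)$. The homeomorphisms $\phi_i$ are $\varepsilon_i$-GH approximations, and they lift to the covers $\bar X\to\bar M_i$. The ball growth in the covers is comparable, up to an additive error $O(\varepsilon_i)$ on radii, once one counts orbits of the deck group $\Gamma$ with a fundamental domain. Also $h(\bar M_i)\le h(\til M_i)$, since $\bar M_i$ is a quotient of $\til M_i$. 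Combining these with the hypothesis gives
\[
h(\bar X)^N\mathcal H^N(X)\le \lim_i h(\til M_i)^N\Vol(M_i)=k\,h(\til Y)^N\Vol(Y).
\]
\Cref{thm:Alex-entropy-degree} gives the reverse inequality, so equality holds.

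\textbf{Step 3: rigidity.} The equality case of \Cref{thm:Alex-entropy-degree}, together with the index hypothesis from Step 1, gives that $X$ is isometric, after homothety, to a locally symmetric Riemannian manifold. It also gives that $F$ is homotopic to a Riemannian covering of degree $k$ of $Y$. The homothety factor is pinned down by the volume equality: $h$ scales inversely to length and $\mathcal H^N$ scales as length$^N$, so $h^N\mathcal H^N$ is scale invariant, and equality forces the scale to match $g_0$ on the cover. Hence $X$ is a smooth manifold with no metric singularities. For $k=1$ the covering is an isometry onto $(Y,g_0)$.

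\textbf{Main obstacle.} I expect the main obstacle to be the entropy semicontinuity in Step 2. Entropy is not continuous under GH convergence in general. Here, however, the covers are controlled by a fixed deck group $\Gamma\cong\pi_1(X)/\ker F_*$ acting cocompactly, and the covers themselves converge equivariantly. I would first try the Švarc--Milnor orbit-counting argument, with uniform diameter bounds on fundamental domains. If that fails, I would fall back on the equivariant GH convergence results of Fukaya--Yamaguchi for the lifted actions.
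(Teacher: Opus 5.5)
Your overall architecture matches the paper's. You identify the limit as an $N$-dimensional Alexandrov space. You combine volume continuity with lower semicontinuity of entropy to bound the entropy--volume product of $X$ from above. You transfer $f_i$ to $X$ through Perelman's homeomorphisms $\phi_i$, which preserves $|\deg|=k$ and the index. You then squeeze against \Cref{thm:Alex-entropy-degree} and invoke its equality case.

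The only methodological difference is the semicontinuity step. The paper cites Reviron \cite{Reviron} for $h(\til X)\le\liminf_i h(\til M_i)$ on universal covers, and then implicitly uses $h(\bar X)\le h(\til X)$ to reach equality in \eqref{eq:Alex-entropy-degree-ineq}. You instead prove $h(\bar X)\le\liminf_i h(\til M_i)$ directly, by lifting the $\phi_i$ to $\Gamma$-equivariant homeomorphisms of covers and counting orbits. This lands on the correct cover at once, and it works, but three points need care.
\begin{enumerate}
\item You need the relative form of Perelman's stability theorem: homeomorphisms $\tau(\varepsilon_i)$-close to prescribed GH approximations, not just the existence statement in \Cref{thm:perelman-stability}. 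You also need a uniform scale below which loops in $X$ are null-homotopic, so that short segments lift to short segments.
\item The resulting distortion is multiplicative, of the form $(1+\varepsilon_i/\delta)(d+\delta)$, not an additive $O(\varepsilon_i)$ on radii. That still suffices for entropy.
\item Since you fixed $i_0$, the cover of $M_i$ you land in is $\til M_i/(\phi_i)_*\ker F_*$, not $\til M_i/\ker(f_i)_*$. This is harmless, because every intermediate cover has entropy at most $h(\til M_i)$.
\end{enumerate}

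The genuine gap is in Step 3. You note that $h^N\mathcal{H}^N$ is invariant under homotheties, and then conclude that equality ``forces the scale to match $g_0$''. That does not follow: scale invariance means the hypothesis carries no information about the scale at all. In fact the unscaled conclusion is false. Take $c\neq 1$ and the constant sequence $M_i=(Y,c^2g_0)$ with $f_i=\mathrm{id}$ and $k=1$. It satisfies every hypothesis: sectional curvature $\ge c^{-2}\min K_{g_0}$, non-collapsed, and the same entropy--volume product. Yet its limit has volume $c^N\Vol(Y,g_0)$ and is not isometric to $(Y,g_0)$.

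What your argument actually proves is weaker. $X$ is a smooth manifold without metric singularities, homothetic (with distances scaled by $h(\til Y,g_0)/h(\bar X)$) to a degree $k$ Riemannian covering of $(Y,g_0)$. The same holds for the paper's argument, which simply says the claims follow from the equality case of \Cref{thm:Alex-entropy-degree}, whose rigidity is only up to homothety. To obtain an isometry you need an extra normalization, e.g.\ $\lim_i\Vol(M_i,g_i)=k\Vol(Y,g_0)$ as in \Cref{cor:gh-stability}. Then $\mathcal{H}^N(X)=k\Vol(Y,g_0)$, equality forces $h(\bar X)=h(\til Y,g_0)$, and no rescaling occurs.
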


In comparison, Song showed a stability theorem for the volume entropy of hyperbolic manifolds without assuming \textit{a priori} curvature bounds \cite{Song-2025}. 
He proved that sequences of metrics approaching the minimal volume entropy converge to the hyperbolic metric in the measured GH--topology but only after the excision of subsets of vanishing volume.
Song also noted that strict GH--stability fails in this unconstrained setting, because sequences can develop thin, volume-negligible threads that diverge in the GH--topology. 

Corollaries \ref{cor:gh-stability} and \ref{cor:gh-stability2} show that a uniform lower curvature bound is a sufficient geometric obstruction to the formation of such threads. 
The Alexandrov curvature bound guarantees GH--precompactness, and a non-collapsed Alexandrov limit space. 
Analyzing this singular limit directly via \Cref{thm:Alex-entropy-degree}, we show GH--stability, without requiring subset excision, showing that entropy-volume minimization impedes the formation of metric singularities.

Moreover, the stability criteria in Corollaries \ref{cor:gh-stability} and \ref{cor:gh-stability2}  complement recent results on metric rigidity. 
Whereas Basso, Marti, and Wenger \cite{Basso-Marti-Wenger-2025} prove Lipschitz-volume rigidity for metric spaces that are homeomorphic to manifolds, their approach needs the limit space to already be homeomorphic to a manifold and a given $1$-Lipschitz map. 
In contrast, our entropy-volume minimization here actively converges to the GH--limit space  metric singularities, simultaneously finding the homeomorphims, proving the Riemannian smoothness, and the showing isometry starting only from a continuous map.

\medskip 

Yamaguchi produced an upper bound for the simplicial volume for Alexandrov spaces with curvature bounded below by $-1$ \cite[Theorem 0.5]{yamaguchi1997simplicial} .
This include hyperbolic cone-manifolds with cone angles $\le 2\pi$. 
With \Cref{thm:Alex-entropy-degree}, we can find sharp lower bounds for the volume of such Alexandrov spaces, in terms of the simplicial volume. 

\begin{corollary}\label{cor:cone-simplicial-volume}
Let $X$ be a closed $N$-dimensional Alexandrov space with curvature bounded below by $-1$. 
Let $Y$ be a closed smooth hyperbolic $N$-manifold, $f \colon X \to Y$ be a continuous map of absolute degree $k \ge 1$, and $v_N$ the maximal volume of an ideal regular simplex in $\mathbb{H}^N$. 
Then
\begin{equation}\label{eq:cone-vol-bound}
    \mathcal{H}^N(X) \ge k \operatorname{Vol}(Y) = k v_N \|Y\|,
\end{equation}

In particular, if $f$ is a homotopy equivalence, then $\|X\| = \|Y\|$, and 
\begin{equation}\label{eq:cone-simplicial-bound}
    \mathcal{H}^N(X) \ge v_N \|X\|.
\end{equation}
Equality in \eqref{eq:cone-simplicial-bound} occurs if and only if $X$ is isometric to $Y$. 

Consequently, if $X$ is a hyperbolic cone-manifold with a non-empty singular set (meaning there are cone angles strictly less than $2\pi$), the inequality is strict: $$\mathcal{H}^N(X) > v_N \|X\|.$$
\end{corollary}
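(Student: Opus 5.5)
We derive everything from \Cref{thm:Alex-entropy-degree} applied with target the closed hyperbolic manifold $Y$, combined with an entropy bound for $X$ coming from its curvature bound.

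\emph{Step 1: entropy of $X$.} Since $X$ has curvature bounded below by $-1$, the intermediate cover $\bar X$ (with the lifted metric) is again an Alexandrov space of curvature $\ge -1$, because the comparison condition is local. The Bishop--Gromov volume comparison for Alexandrov spaces then gives $\mathcal{H}^N(B(x,R))\le \Vol(B_{\mathbb{H}^N}(R))$ in $\bar X$. Hence $h(\bar X)\le N-1$.

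\emph{Step 2: apply the main theorem.} For $Y$ hyperbolic we have $h(\til Y)=N-1$. \Cref{thm:Alex-entropy-degree} gives
\[(N-1)^N\mathcal{H}^N(X)\ge h(\bar X)^N\mathcal{H}^N(X)\ge k(N-1)^N\Vol(Y),\]
so $\mathcal{H}^N(X)\ge k\Vol(Y)$. By the Gromov--Thurston proportionality, $\Vol(Y)=v_N\|Y\|$, which proves \eqref{eq:cone-vol-bound}. For this we need $X$ oriented, which is implicit in speaking of the degree.

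\emph{Step 3: homotopy equivalence.} If $f$ is a homotopy equivalence, then $k=1$. Simplicial volume is a homotopy invariant of the fundamental class: $f_*[X]=\pm[Y]$, and the homotopy inverse pushes $[Y]$ to $\pm[X]$. Functoriality of the $\ell^1$-seminorm then gives both $\|X\|\ge\|Y\|$ and $\|Y\|\ge\|X\|$. This requires $X$ to have a fundamental class, so we use that a closed oriented Alexandrov space is an oriented rational homology manifold; for topological manifolds (such as cone-manifolds) this is standard. Thus $\|X\|=\|Y\|$, and \eqref{eq:cone-simplicial-bound} follows from Step 2.

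\emph{Step 4: equality, the main obstacle.} Suppose $\mathcal{H}^N(X)=v_N\|X\|=\Vol(Y)$. Step 2 gives $(N-1)^N\mathcal{H}^N(X)\ge h(\bar X)^N\mathcal{H}^N(X)\ge (N-1)^N\Vol(Y)$, so both inequalities are equalities and $h(\bar X)=N-1$. Since $f$ is a homotopy equivalence, $f_*$ is an isomorphism, so $[\pi_1(Y):f_*\pi_1(X)]=1=|\deg f|$. The rigidity part of \Cref{thm:Alex-entropy-degree} then shows that $X$ is, up to homothety, isometric to a locally symmetric manifold covering $Y$ with degree $1$.

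The delicate point is the scale. The main theorem only gives isometry up to homothety, so we must pin down the scaling factor $\lambda$ with $X\cong\lambda Y$. Since $\bar X=\til X\cong\lambda\mathbb{H}^N$, the entropy is $h(\bar X)=(N-1)/\lambda$. Combined with $h(\bar X)=N-1$ from the equality, this forces $\lambda=1$, so $X\cong Y$. The converse is immediate.

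\emph{Step 5: cone-manifolds.} If $X$ is a hyperbolic cone-manifold with nonempty singular set and equality held in \eqref{eq:cone-simplicial-bound}, Step 4 would make $X$ a smooth Riemannian manifold. But at a cone point the space of directions is not a round sphere, since it is a spherical join containing a circle of length less than $2\pi$. This is a metric invariant, so it contradicts smoothness, and the inequality is strict.
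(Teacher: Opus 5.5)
Your proposal is correct and follows the paper's route: Bishop--Gromov gives the entropy bound $h(\bar X)\le N-1$, the main theorem together with Gromov--Thurston proportionality gives \eqref{eq:cone-vol-bound}, homotopy invariance of simplicial volume gives \eqref{eq:cone-simplicial-bound}, and the rigidity case combined with the non-Euclidean tangent cones at cone points gives strictness. You are in fact more careful than the paper, since you check the index hypothesis $[\pi_1(Y):f_*\pi_1(X)]=1$ and pin the homothety factor via $h(\bar X)=N-1$; one small correction is that closed oriented Alexandrov spaces need not be rational homology manifolds (e.g.\ the spherical suspension of $\mathbb{CP}^2$), but you only need the fundamental class $[X]$, which orientability already supplies.
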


This strict lower bound solves a specific difficulty in the deformation theory of hyperbolic cone-metrics. 
As detailed by Cooper, Hodgson, and Kerckhoff \cite[Theorem 3.20, Corollary 3.21]{cooper2000three}, the Schläfli formula determines that the volume of a hyperbolic cone-manifold strictly decreases as its cone angles continuously increase. 
This proves that the smooth hyperbolic structure (where all cone angles equal $2\pi$) is a local volume minimum along any continuous deformation path.

However, deducing that the smooth metric is an absolute volume minimum across the entire moduli space of cone-metrics is complicated by the topology of the deformation space. 
It remains an open question whether the space of hyperbolic Dehn surgery structures is connected  \cite[Section 5.6, Conjecture 2]{cooper2000three}. 
Be that as it may, using \Cref{thm:Alex-entropy-degree} we obtain a volume bound that is path-independent and that completely sidesteps the deformation space.

\begin{corollary}\label{cor:cone-manifold-volume}
Let $M$ be a closed $N$-manifold that admits a smooth hyperbolic metric $g_{\text{hyp}}$. 
Let $g_{\text{cone}}$ be a hyperbolic cone-metric on $M$ with a non-empty singular locus such that all cone angles satisfy $\alpha_i \le 2\pi$. 
Then the volume of the $g_{\text{cone}}$ is strictly larger thatn the volume of $g_{\text{hyp}}$:
\begin{equation}
    \mathcal{H}^N(M, g_{\text{cone}}) > \operatorname{Vol}(M, g_{\text{hyp}}).
\end{equation}
\end{corollary}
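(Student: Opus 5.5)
We will prove Corollary~\ref{cor:cone-manifold-volume} by applying \Cref{thm:Alex-entropy-degree} to the identity map $f=\mathrm{id}\colon (M,g_{\text{cone}})\to (M,g_{\text{hyp}})$, combined with a comparison of volume entropies.

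First we record that $X=(M,g_{\text{cone}})$ is a closed $N$-dimensional Alexandrov space with curvature bounded below by $-1$. The regular part is hyperbolic, and cone angles $\le 2\pi$ make the local triangle comparison hold across the singular locus (as recalled in \Cref{sec:Alex-geom}). Orientability comes from $M$. The identity is continuous, of degree $1$, and $f_*$ is an isomorphism on $\pi_1$. Hence $\ker f_*$ is trivial, $\bar X=\til X$, and $[\pi_1(Y):f_*\pi_1(X)]=1=|\deg f|$. So both the inequality and the rigidity clause of \Cref{thm:Alex-entropy-degree} are available, with $Y=(M,g_{\text{hyp}})$ and $h(\til Y)=N-1$.

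Next we bound the entropy: $h(\til X)\le N-1$. The universal cover $\til X$ is an Alexandrov space with curvature $\ge -1$. By the Bishop--Gromov volume comparison for Alexandrov spaces, $\mathcal H^N(B(x,R))\le \Vol_{\mathbb H^N}(B(R))\le C e^{(N-1)R}$, which gives the entropy bound. Inserting it into \eqref{eq:Alex-entropy-degree-ineq} yields
\[
(N-1)^N\mathcal H^N(X)\ge h(\til X)^N\mathcal H^N(X)\ge (N-1)^N\Vol(M,g_{\text{hyp}}),
\]
so $\mathcal H^N(M,g_{\text{cone}})\ge \Vol(M,g_{\text{hyp}})$.

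Finally, we establish strictness, which is the main step. Suppose equality held. Then every inequality above is an equality, so in particular $h(\til X)^N\mathcal H^N(X)=(N-1)^N\Vol(Y)$ with $|\deg f|=[\pi_1(Y):f_*\pi_1(X)]$. The rigidity part of \Cref{thm:Alex-entropy-degree} then makes $X$ isometric, after homothetic rescaling, to a smooth locally symmetric Riemannian manifold. Since the singular locus is non-empty, some point $p$ has a cone angle $\alpha_i<2\pi$. The tangent cone, equivalently the space of directions $\Sigma_pX$, at such a point is not isometric to a round sphere, or to $\R^N$ for the cone. That is an isometry invariant, and it is preserved under rescaling. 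A smooth Riemannian manifold has Euclidean tangent cones everywhere, a contradiction. The care needed is to confirm the rigidity hypothesis exactly, namely that equality in the chain forces equality in \eqref{eq:Alex-entropy-degree-ineq} itself and not just in the weaker bound. This holds because $(N-1)^N\mathcal H^N(X)=(N-1)^N\Vol(Y)$ together with $h(\til X)\le N-1$ and \eqref{eq:Alex-entropy-degree-ineq} squeezes $h(\til X)^N\mathcal H^N(X)$ to equal $(N-1)^N\Vol(Y)$. Therefore the inequality is strict.
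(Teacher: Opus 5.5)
Your proposal is correct and is essentially the paper's argument. The paper applies Corollary~\ref{cor:cone-simplicial-volume} to the identity $(M,g_{\text{cone}})\to(M,g_{\text{hyp}})$, and that corollary's proof is exactly your chain: Bishop--Gromov giving $h(\til X)\le N-1$, the inequality of \Cref{thm:Alex-entropy-degree}, and the rigidity clause contradicted by the cone singularities. You inline it and skip the harmless simplicial-volume reformulation, and you usefully make explicit both the squeeze to equality in \eqref{eq:Alex-entropy-degree-ineq} and the index condition $[\pi_1(Y):f_*\pi_1(X)]=1=|\deg f|$ that the rigidity clause requires.
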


In one of the proofs of the orbifold theorem, controlling the volume of cone-metrics as they approach the orbifold angles is a critical analytic step. 
Boileau, Leeb, and Porti \cite[Section 6.2]{boileau2005geometrization} rely on the local differential properties of the Schläfli formula to track this volume variation. 
With \Cref{thm:Alex-entropy-degree}, we circumvent the need to analyse continuous deformation limits on orbifolds entirely, as well.
This allows us to find the volume minimum for orbifold cone-metrics.

\begin{corollary}\label{cor:orbifold-volume-minimality}
Let $\mathcal{O}$ be a closed $N$-dimensional orbifold modeled on a negatively curved locally symmetric space, and let $g_{\text{sym}}$ denote its canonical locally symmetric orbifold metric. Let $g_{\text{cone}}$ be a cone-metric on the underlying topological space $|\mathcal{O}|$ such that the cone angle along any singular stratum of ramification index $m_i$ satisfies $\alpha_i \le 2\pi/m_i$. 

The volume of the cone-metric is bounded from below by the volume of the locally symmetric metric,
\begin{equation}
    \mathcal{H}^N(|\mathcal{O}|, g_{\text{cone}}) \ge \operatorname{Vol}(\mathcal{O}, g_{\text{sym}}).
\end{equation}
Furthermore, if any cone angle satisfies the strict inequality $\alpha_i < 2\pi/m_i$, then the volume inequality is strict,
 $$\mathcal{H}^N(|\mathcal{O}|, g_{\text{cone}}) > \operatorname{Vol}(\mathcal{O}, g_{\text{sym}}).$$
\end{corollary}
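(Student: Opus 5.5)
The plan is to pass to a finite manifold cover of $\mathcal{O}$, on which the pulled-back cone-metric becomes a cone-manifold with angles $\le 2\pi$. We then apply \Cref{thm:Alex-entropy-degree} to the identity map and divide by the covering degree.

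\textbf{Step 1: the manifold cover.} Since $\mathcal{O}$ is modeled on a negatively curved symmetric space $\til{Y}$, its orbifold fundamental group is a cocompact lattice in $\operatorname{Isom}(\til{Y})$. By Selberg's lemma it has a torsion-free finite-index subgroup. This gives a closed locally symmetric manifold $Y = \til{Y}/\Gamma'$ and an orbifold covering $p: Y \to \mathcal{O}$ of some degree $d$, with $\operatorname{Vol}(Y, g_0) = d\operatorname{Vol}(\mathcal{O}, g_{\text{sym}})$.

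\textbf{Step 2: pulling back the cone-metric.} Pull $g_{\text{cone}}$ back along $p$ to the underlying manifold $Y$ to obtain a metric space $X$. Around a singular stratum of ramification index $m_i$, the covering unwraps an $m_i$-fold local quotient, so the cone angle becomes $m_i\alpha_i \le 2\pi$. Hence $X$ is a cone-manifold with all cone angles $\le 2\pi$. I will take $g_{\text{cone}}$ to be locally modeled on $\til{Y}$ away from the singular locus (this is how I read ``cone-metric'' here). Then $X$ has sectional curvature bounded below on its regular part, and since the angles are $\le 2\pi$ it is an $N$-dimensional Alexandrov space. The same local count gives $\mathcal{H}^N(X) = d\,\mathcal{H}^N(|\mathcal{O}|, g_{\text{cone}})$.

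\textbf{Step 3: applying the theorem.} The identity $f: X \to Y$ is a homotopy equivalence of degree one, so $|\deg f| = 1 = [\pi_1(Y): f_*\pi_1(X)]$ and $\bar X = \til X$. \Cref{thm:Alex-entropy-degree} gives
\[
h(\til X)^N \mathcal{H}^N(X) \ge h(\til Y)^N \operatorname{Vol}(Y).
\]

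\textbf{Step 4: the entropy comparison $h(\til X) \le h(\til Y)$ (main obstacle).} In the real hyperbolic case this is immediate: $X$ has curvature $\ge -1$, so Bishop--Gromov comparison in Alexandrov spaces gives the bound. For general rank-one targets I would argue through the exponential map of $\til X$ at a regular point. Geodesics do not pass through cone points of angle $<2\pi$. The singular set has measure zero, and almost every point is reached by a geodesic lying entirely in the regular part, which is locally isometric to $\til Y$. Along such a geodesic the Jacobi fields are exactly those of $\til Y$ until the geodesic stops minimizing. Hence $\mathcal{H}^N(B_{\til X}(x,R)) \le \operatorname{Vol}(B_{\til Y}(y,R))$, and therefore $h(\til X) \le h(\til Y)$. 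Making this rigorous is the delicate point: one must check that cut loci created by the cone singularities only decrease volume. I would try first to compare the cone-manifold with a smoothing, or argue via a Günther-type area comparison on the regular part.

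\textbf{Step 5: conclusion.} Combining Steps 3 and 4 gives $\mathcal{H}^N(X) \ge \operatorname{Vol}(Y)$. Dividing by $d$ yields $\mathcal{H}^N(|\mathcal{O}|, g_{\text{cone}}) \ge \operatorname{Vol}(\mathcal{O}, g_{\text{sym}})$.

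\textbf{Step 6: strictness.} Suppose some $\alpha_i < 2\pi/m_i$ and yet equality holds. Then every inequality above is an equality, so \Cref{thm:Alex-entropy-degree} makes $X$ isometric to a smooth locally symmetric Riemannian manifold. But $X$ has a cone point of angle $m_i\alpha_i < 2\pi$, whose space of directions is not a round sphere. This is a contradiction, so the inequality is strict.
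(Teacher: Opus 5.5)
Your proof follows the paper's step for step: Selberg cover of degree $d$, lifted angles $m_i\alpha_i\le 2\pi$, \Cref{thm:Alex-entropy-degree} applied to the identity, division by $d$, and the rigidity clause for strictness. The exception is Step 4, where you are more careful than the paper.

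The paper claims $h(\til{X})\le\lambda=h(\til{Y})$ from Bishop--Gromov, using only the curvature minimum of the model. That works when the model is real hyperbolic and fails otherwise. For complex hyperbolic space of real dimension $N=2n\ge 4$ with $-4\le K\le -1$, Bishop--Gromov gives only $h(\til{X})\le 2(N-1)$, whereas $h(\til{Y})=N$; compare the paper's own remark before \Cref{cor:gh-stability2}.

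Your exact Jacobi-field comparison is the right replacement. It closes without smoothings or any comparison inequality; a G\"unther-type bound would in any case bound volume from below, not above. The steps are as follows.
\begin{enumerate}
\item Fix a regular point $x\in\til{X}$. By Otsu--Shioya \cite{otsu1994riemannian} the cut locus of $x$ is $\mathcal{H}^N$-null, so almost every $y$ is an interior point of a shortest path from $x$.
\item Such a path has only regular points in its interior. This follows from Petrunin's theorem that spaces of directions are isometric along the interior of a shortest path, together with openness of the regular set near $x$. This is the correct formulation rather than ``geodesics avoid cone points'', since shortest paths can run inside a singular stratum.
\item Hence $y=\exp_x(tv)$ with $(t,v)$ in the maximal domain $W$ of the exponential map of the incomplete Riemannian manifold $\til{X}_{\rm reg}$. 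This domain is open. On it, the polar Jacobian of $\exp_x$ equals the polar volume density of $\til{Y}$, because the curvature tensor is parallel and agrees with that of $\til{Y}$.
\item The area formula counts preimages with multiplicity, so $\mathcal{H}^N(B_{\til{X}}(x,R))\le\int_{W\cap\{t<R\}}J\le\operatorname{Vol}(B_{\til{Y}}(R))$. Nothing needs to be checked about how cut loci affect volume.
\end{enumerate}

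One point that both you and the paper skip: \Cref{thm:Alex-entropy-degree} requires oriented spaces. Pass to the orientation-preserving part of the Selberg subgroup, of index at most $2$; nothing else in the argument changes.
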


Let us compare this result with Storm's volume rigidity theorem for Alexandrov spaces \cite{Storm-2006}. 
Storm's degree-$1$ theorem requires the target to be a closed, completely smooth real hyperbolic manifold and it cannot be applied directly to the orbifold $\mathcal{O}$.
In our approach we extended the the Besson-Courtois-Gallot barycenter method so that it can now cover these orbifold cases.
Moreover, Storm's theorem applies only to real hyperbolic geometry. 
\Cref{thm:Alex-entropy-degree} overcomes these limitations, operating across all negatively curved locally symmetric geometries while also tracking the ramification index.

\medskip

We will now explain why the hypothesis equating the degree to the fundamental index in \Cref{thm:Alex-entropy-degree} is needed.

\begin{remark}\label{rmk:rigidity_condition}
The condition $|\deg(f)| = [\pi_1(Y) : f_* \pi_1(X)]$ in the rigidity statement is equivalent to the lift of $f$ to the covering space corresponding to $f_* \pi_1(X)$ has absolute degree equal to $1$. 
Let $p: \hat{Y} \to Y$ be the covering space associated with the subgroup $f_* \pi_1(X) \subset \pi_1(Y)$. 
The degree of $p$ equals the index $[\pi_1(Y) : f_* \pi_1(X)]$. 
The map $f: X \to Y$ lifts to a map $\hat{f}: X \to \hat{Y}$ such that $f = p \circ \hat{f}$. 
By the multiplicativity of degrees,
\[
|\deg(f)| = |\deg(\hat{f})| \cdot |\deg(p)| = |\deg(\hat{f})| \cdot [\pi_1(Y) : f_* \pi_1(X)].
\]
Thus, the condition $|\deg(f)| = [\pi_1(Y) : f_* \pi_1(X)]$ holds if and only if $|\deg(\hat{f})| = 1$. 

When analyzing smooth manifolds (cf \cite{Besson-Courtois-Gallot:95}), this condition is not required as a hypothesis because the equality case makes the barycenter map into  a smooth local isometry, and a regular covering map. 
However, when considering Alexandrov spaces, the limit map is initially only known to be $1$-Lipschitz and volume-preserving almost everywhere. 
 The condition $|\deg(f)| = [\pi_1(Y) : f_*\pi_1(X)]$ implies the lift $\hat{f}$ to the intermediate cover has absolute degree exactly $1$. 
 This allows us to use the degree $1$ Lipschitz-volume rigidity theorem for Alexandrov spaces by Li and Wang  \cite{li2014lipschitz, li2015lipschitz}. 
 
The following construction (cf.\ \cite[Remark 3.2]{RCDbary}) provides an example of  where the equality $|\deg(f)| = [\pi_1(Y) : f_*\pi_1(X)]$ fails.
Let $N \ge 3$, $Y$ be a closed orientable hyperbolic $N$-manifold, and $X = Y \# Y$ be the connected sum of two copies of $Y$. 
The manifold $X$ admits a Riemannian metric, making it a compact $N$-dimensional Alexandrov space. 
Define a Lipschitz map $f: X \to Y$ by collapsing the connecting cylinder $S^{N-1} \times I$ to a point, reducing $X$ to the wedge sum $Y \vee Y$, and mapping both components identically to $Y$ via orientation-preserving maps.

Observe that the connecting sphere $S^{N-1}$ is simply connected, because $N \ge 3$. 
Moreover, $\pi_1(X) \cong \pi_1(Y) * \pi_1(Y)$ by the Seifert--van Kampen theorem. 
The induced homomorphism $f_*: \pi_1(X) \to \pi_1(Y)$ maps both free factors surjectively onto $\pi_1(Y)$. 
The image $f_*(\pi_1(X))$ equals $\pi_1(Y)$, so the index $[\pi_1(Y) : f_* \pi_1(X)] = 1$. 
The pushforward of the fundamental class $[X]$ is the sum of the pushforwards of the fundamental classes of the two components, which means $f_*([X]) = 2[Y]$. 
Therefore the absolute value of the topological degree is $|\deg(f)| = 2 > 1 =[\pi_1(Y) : f_* \pi_1(X)]$. 

If $X$ satisfied the entropy-volume equality, omitting this condition from the rigidity statement would imply that $X$ is globally isometric to $Y$.
This is a contradiction, as the connected sum $Y \# Y$ and the base manifold $Y$ have distinct Betti numbers.
\end{remark}

\subsection*{Organization of the paper}
The paper is organized as follows. 
\S 2 covers background in Alexandrov geometry, including structural properties, metric doublings, and also on homological invariants. 
\S 3 digests the Ambrosio--Kirchheim theory of integral currents in metric spaces, providing the support to circumvent the singular obstacles. 
\S 4 reviews metric differentiation and explains properties of the signed metric Jacobian.
\S 5 presents the statement and proof \Cref{thm:an-deg-equals-top-deg}, the key equivalence between the analytical and topological degrees. 
 \S 6 briefly describes the volume entropy bounds obtained via the barycenter method on metric measure spaces. 
 \S 7 combines all these ingredients to prove \Cref{thm:Alex-entropy-degree}, and deduce all the subsequent results.
 Finally, Alethia's self-report on the AI collaboration assisting this paper is found in the appendix.


\section{Alexandrov Geometry background}\label{sec:Alex-geom}

\subsection{Definition and Curvature Bounds}
An Alexandrov space is a complete, locally compact length space that generalizes Riemannian manifolds by satisfying a synthetic lower curvature bound.
The curvature bounds are defined via distance comparisons based on the Toponogov theorem, as follows
A length space $X$ has curvature bounded below by $\kappa$ in $\mathbf{R}$ if it satisfies that for any triangle $\Delta(p, q, r)$ in $X$ with perimeter less than $2\pi/\sqrt{\kappa}$ (for $\kappa > 0$), there exists a comparison triangle $\bar{\Delta}(\bar{p}, \bar{q}, \bar{r})$ in the simply connected space form $M_\kappa^2$ of constant curvature $\kappa$ such that $d(p, q) = d(\bar{p}, \bar{q})$, $d(q, r) = d(\bar{q}, \bar{r})$, and $d(r, p) = d(\bar{r}, \bar{p})$. 
For any point $x$ on a geodesic segment in $X$ between two vertices, and the corresponding point $\bar{x}$ on the comparison segment in $M_\kappa^2$, the Alexandrov condition requires
$d(z, x) \ge d(\bar{z}, \bar{x})$,
where $z$ is the vertex opposite the segment. 
This inequality characterizes the synthetic lower curvature bound by requiring that geodesic triangles in $X$ are at least as "thick" as their counterparts in $M_\kappa^2$.

A comprehensive introduction to Alexandrov geometry is provided by Burago--Burago--Ivanov \cite{buragobook}.
One of the important aspects of Alexandrov spaces  is that they are precisely the Gromov–Hausdorff limits of smooth Riemannian manifolds with a uniform lower sectional curvature bound. 
This is a cornerstone result of the structure theory developed by Burago, Gromov, and Perelman \cite{burago1992ad}, building on the foundational compactness results of Gromov \cite{Gromov-1981}.
This feature makes the study of Alexandrov geometry essential when understanding limits of geometric flows or convergence of sequences of manifolds, beyond its intrinsic interest.

\subsection{Local Structure}
A metric space $(X, d)$ is said to be \textit{locally asymptotically conical} at a point $p \in X$ if the rescaled pointed metric spaces $(X, \lambda d, p)$ converge in the pointed Gromov--Hausdorff sense to a metric cone as $\lambda \to \infty$.
At every point $p$ in $X$, the space is locally asymptotically conical. 
The \textit{space of directions} $\SigmaP$ is the completion of the set of geodesic directions at $p$. 
The \textit{tangent cone} $\Kp$, which arises exactly as this Gromov--Hausdorff limit, is the Euclidean cone $C(\SigmaP)$ over the space of directions, equipped with the distance:
\begin{equation}
    d_{\Kp}((t, \xi), (s, \eta))^2 = t^2 + s^2 - 2ts \cos d_{\SigmaP}(\xi, \eta).
\end{equation}

\subsection{Boundary}

Let $\mathbf{R}^{+}= [0,\infty)$, and recall that a point $p$ in $X$ is a \emph{boundary point} ($p$ in $\partial X$) if the tangent cone $T_p X$ is isometric to $\mathbf{R}^{N-1} \times \mathbf{R}^{+}$.
 Equivalently, $p$ lies in $\partial X$ if the space of directions $\Sigma_p X$ has non-empty boundary, in which case $\partial (\Sigma_p X) \cong \Sigma_p (\partial X)$.

\subsection{Rectifiability}\label{sec:N-rect-Alex}

A metric space is countably $N$-rectifiable if it can be covered, up to an $\mathcal{H}^N$-measure zero set, by a countable union of bi-Lipschitz images of subsets of ${\bf R}^N$. 
Otsu and Shioya \cite{otsu1994riemannian} proved that the singular set of an $N$-dimensional Alexandrov space has a Hausdorff dimension of at most $N-1$ (so its $\mathcal{H}^N$-measure equals zero). 
Furthermore, they proved the regular set has full measure and can be covered by a countable number of domains that are bi-Lipschitz equivalent to open subsets of  ${\bf R}^N$.

By the foundational structure theorems for Alexandrov spaces with curvature bounded below, the regular set of $X$ admits local bi-Lipschitz distance charts into $\mathbf{R}^N$ \cite{burago1992ad}, while the singular set's Hausdorff dimension at most $N-1$ \cite{otsu1994riemannian}.
 Consequently, $X$ is countably $N$-rectifiable with respect to the $N$-dimensional Hausdorff measure $\mathcal{H}^N$

\subsection{Singular and Regular Sets}

The singular structure of Alexandrov spaces is stratified. 
We distinguish between regular points, where the space resembles a smooth manifold, and singular points, where it does not.

A point $p$ in $X$ is \textit{regular} if $\Kp$ is isometric to $\R^N$. 
The set of regular points is denoted $\Xreg$. 
A point is \textit{singular} if it is not regular; the singular set is denoted $\Xsing$.

While it is possible to refine the description of the stratification detailing the dimension of the Euclidean factor in the tangent cone, for the purpose of the degree towards our goals, the binary distinction  regular versus singular will be enough.
The foundational topological stratification and the bound on the Hausdorff dimension of the singular set were originally established by Burago, Gromov, and Perelman \cite{burago1992ad}. 
Building upon this, Otsu and Shioya \cite{otsu1994riemannian} formalized the analytic and Riemannian properties of the regular set, demonstrating that the manifold domain is open, dense, and of full measure. 
A comprehensive modern synthesis of these structural results is provided in the textbook by Burago, Burago, and Ivanov \cite{buragobook}. 
We summarize the information we need in the following result.

\begin{theorem}[\cite{burago1992ad, otsu1994riemannian, buragobook}]\label{thm:AlexStructure}
Let $X$ be  an $N$-dimensional Alexandrov space, then,
\begin{enumerate}
    \item $\Xreg$ is dense in $X$;
    \item $\Xsing$ has Hausdorff dimension at most $N-1$;
    \item The $N$-dimensional Hausdorff measure of the singular set is zero, $\Hn(\Xsing) = 0$.
\end{enumerate}
\end{theorem}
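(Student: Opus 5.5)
The theorem gathers three structural facts, and I would establish them in the order (2), (3), (1), since each follows from the one before. Throughout I would use only the local structure of Alexandrov spaces recalled above: the tangent cone $\Kp = C(\SigmaP)$ at every point, and the fact that $\SigmaP$ is itself an $(N-1)$-dimensional Alexandrov space of curvature $\ge 1$.

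For (2), I would follow Burago--Gromov--Perelman and stratify $\Xsing$ by the maximal dimension $k$ of a Euclidean factor $\mathbf{R}^k$ that splits off $\Kp$. If $p$ is singular, then $\Kp \neq \mathbf{R}^N$. Moreover $\Kp$ cannot split off $\mathbf{R}^N$, because a cone of dimension $N$ splitting off $\mathbf{R}^N$ would be $\mathbf{R}^N$ itself. So every singular point lies in a stratum with $k \le N-1$. The key step is a covering argument based on $(k,\delta)$-strainers. Near a point whose tangent cone splits off $\mathbf{R}^k$ but not $\mathbf{R}^{k+1}$, the strainer map $x \mapsto (d(a_1,x),\dots,d(a_k,x))$ is bi-Lipschitz onto its image when restricted to the points of the same stratum. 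This gives the local bound $\dim_{\mathcal H} \le k \le N-1$ on each stratum. Since $X$ is separable, it is covered by countably many such neighbourhoods, and so $\dim_{\mathcal H}\Xsing \le N-1$.

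I expect this to be the main obstacle. The difficulty is controlling the transversal directions, that is, showing that points of the same stratum cannot accumulate in directions that are not strained. I would handle it using the upper semicontinuity of the space of directions under blow-up together with the volume-cone rigidity of $\Sigma_p X$: if $\Hn$ of a small ball is close to the Euclidean value, then the tangent cone is close to $\mathbf{R}^N$.

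Part (3) then follows directly. A set of Hausdorff dimension at most $N-1 < N$ has $\Hn$-measure zero, by the definition of Hausdorff dimension.

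For (1), I would take a nonempty open set $U \subset X$. Since $X$ is $N$-dimensional, Bishop--Gromov comparison gives $\Hn(B(x,r)) > 0$ for every ball. Hence $\Hn(U) > 0 = \Hn(\Xsing)$, so $U \not\subset \Xsing$ and $U$ meets $\Xreg$. This proves that $\Xreg$ is dense.

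Alternatively, all three items can be cited directly from \cite{burago1992ad, otsu1994riemannian}, with the Otsu--Shioya bi-Lipschitz charts of \Cref{sec:N-rect-Alex} supplying the measure-theoretic statements. In the paper I would present the argument above only as a sketch and defer to these references for the strainer estimates.
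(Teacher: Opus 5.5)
The paper gives no proof of this theorem. It quotes all three items from Burago--Gromov--Perelman and Otsu--Shioya, so your fallback of citing them is exactly what the paper does. Your reductions are correct and worth keeping. Item (3) follows from (2) because $\mathcal{H}^s$ vanishes on sets of Hausdorff dimension $<s$. Item (1) follows from (3) because every nonempty open set has positive $\Hn$-measure. Bishop--Gromov by itself only compares balls of different radii, so positivity also needs $\Hn(X)>0$ (or $\mathcal{H}^{N-1}(\Sigma_x X)>0$), which comes from the same structure theory. This leaves (2) as the only real input.

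Your self-contained sketch of (2), however, rests on a false claim. Points of an exact stratum (tangent cone splitting off $\mathbf{R}^k$ but not $\mathbf{R}^{k+1}$) \emph{can} accumulate in unstrained directions, and your ``main obstacle'' step tries to rule out exactly this. On the boundary of a convex body in $\mathbf{R}^3$, conical points can accumulate at a conical point $q$; they can even be dense. Take $\Sigma$ to be such a surface and $q_j\to q$ conical. In $\mathbf{R}^k\times\Sigma$ the exact $k$-stratum near $(0,q)$ then contains the sheets $\mathbf{R}^k\times\{q_j\}$. These are infinitely many disjoint $k$-dimensional pieces of uniform size, so no map into $\mathbf{R}^k$ is bi-Lipschitz on that stratum in any neighbourhood of $(0,q)$, a strainer map in particular. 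Your proposed remedy cannot control this transversal accumulation: semicontinuity of $\Sigma_x X$ and volume-cone rigidity only show that the almost-regular set is open.

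What works is the quantitative version. Let $S_\delta$ be the set of points admitting no $(N,\delta)$-strainer. A point is regular iff it is $(N,\delta)$-strained for every $\delta>0$, by compactness of $\Sigma_x X$ and the suspension (splitting) theorem, so $\Xsing=\bigcup_j S_{1/j}$. Burago--Gromov--Perelman prove $\dim_{\mathcal H}S_\delta\le N-1$ for each fixed $\delta$, with local estimates at a scale depending on $\delta$. In the example above the accumulation disappears at a fixed level: only finitely many $q_j$ have angle deficit bounded below in terms of $\delta$, because the total curvature is finite. Countable stability of Hausdorff dimension then gives (2), and the finer stratification by Euclidean factors is not needed.
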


\subsection{Locally Lipschitz Contractibility}

To relate metric analysis to topology, specifically to guarantee the existence of a fundamental class in singular homology, the space must be locally contractible in a controlled manner.

A metric space $X$ is said to be {\emph locally Lipschitz contractible} if for every point $p$ in $X$ and sufficiently small $r > 0$, the ball $B(p, r)$ can be contracted to a point inside $B(p, C r)$ via a Lipschitz homotopy.
Mitsuishi and Yamaguchi  proved that finite-dimensional Alexandrov spaces are locally lipschitz contractible \cite{mitsuishi2014llc}.

\begin{theorem}[Mitsuishi--Yamaguchi \cite{mitsuishi2014llc}]\label{thm:MY-Alex-sllc}
Let $X$ be an Alexandrov space of dimension $N$ and curvature bounded below by $\kappa$. 
Then $X$ is locally Lipschitz contractible. 
There exists a constant $C$ such that for any $x \in X$ and small $r$, there is a Lipschitz homotopy contracting $B(x, r)$ to $x$ within $B(x, Cr)$.
\end{theorem}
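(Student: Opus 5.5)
The statement is the Mitsuishi--Yamaguchi theorem, so my plan is to reconstruct their argument: the conical structure at each point gives a Lipschitz contraction along radial directions. The main tool is Perelman's stability/local conical structure theorem. For each $p\in X$ there is $r_p>0$ and a homeomorphism from $B(p,r_p)$ onto a neighborhood of the vertex in the tangent cone $\Kp=C(\SigmaP)$, respecting the distance to $p$. This alone gives local contractibility, but not Lipschitz control. Since the homeomorphism is not bi-Lipschitz in general, I will not push the cone's radial contraction through it. Instead I will build the contraction intrinsically from the distance function $d_p$ and its gradient flow, which is Lipschitz by Perelman--Petrunin theory.

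\textbf{Step 1 (regularity of $d_p$ near $p$).} I fix $p$ and note that $d_p$ is semiconcave. On a punctured ball $B(p,r)\setminus\{p\}$ with $r$ small, $|\nabla d_p|\ge 1-\epsilon$ on most of the ball. Where critical points of $d_p$ accumulate I replace $d_p$ by a regularized function $f$, for example an average of distance functions to points near $p$ in the spirit of Perelman's construction. I want $f$ to be semiconcave, bi-Lipschitz comparable to $d_p$, with $|\nabla f|\ge c>0$ away from $p$ and $f(p)=0$. The existence of such an $f$ with no critical points in the punctured ball, uniformly at small scales, is exactly what conical structure provides.

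\textbf{Step 2 (flow).} I consider the gradient-like flow $\Phi_t$ of $-f/|\nabla f|^2$, reparametrized so that $f$ decreases at unit rate. Gradient flows of $\lambda$-concave functions on Alexandrov spaces are $e^{\lambda t}$-Lipschitz (Petrunin), so over times of order $r$ the flow is Lipschitz with constant $e^{\lambda r}\le 2$. I define the homotopy $H(x,s)=\Phi_{s f(x)}(x)$, which sends $x$ to $p$ at $s=1$. Lipschitz continuity in $s$ comes from unit speed. Lipschitz continuity in $x$ combines the Lipschitz bound on the flow with the Lipschitz bound on $f$. The image stays in $B(p,Cr)$ because $f$ is nonincreasing along the flow and comparable to $d_p$.

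\textbf{Step 3 (constants).} The theorem asks for a uniform $C$. I would obtain it from the rescaling limits $(X,\lambda d,p)\to \Kp$ together with compactness of the class of $N$-dimensional spaces of directions. A contradiction argument would show that the comparison constants between $f$ and $d_p$, and the lower bound $c$, can be chosen independently of $p$ for $r$ below a scale depending on $p$.

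\textbf{Main obstacle.} The hard part is Step 2: the normalized flow of $\nabla f/|\nabla f|^2$ is not a genuine gradient flow, so the Petrunin Lipschitz estimate does not apply directly. The first thing I would try is the one-dimensional-reduction trick. Because Mitsuishi--Yamaguchi prove the result by induction on dimension using the fibration structure over the space of directions, I would run the same induction. I would assume the statement for $\SigmaP$, which has dimension $N-1$ and curvature at least $1$, lift its Lipschitz contraction conically to $\Kp$, and transfer it to $X$ through Perelman's Lipschitz-controlled almost-isometries at small scales.
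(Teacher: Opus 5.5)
The paper gives no proof of this statement (it is quoted from Mitsuishi--Yamaguchi), so your argument has to stand on its own, and its core step fails. The problem in Step~2 is the \emph{direction} of the flow, not its normalization. Petrunin's results (existence and uniqueness of gradient curves, and the $e^{\lambda t}$-Lipschitz bound) concern the ascending flow of a $\lambda$-concave function, along which the function increases; for $f\approx d_p$ this pushes points away from $p$. Flowing along $-\nabla f$ means taking the gradient flow of the semiconvex function $-f$, and the theory does not provide that flow at all: at a point joined to $p$ by several shortest paths there is not even a preferred direction toward $p$.

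Step~1 cannot fix this. Suppose $f$ were $\lambda$-concave across $p$ with $f(p)=0$ and $f\ge c\,d_p$, and take $p$ interior to a shortest path $\gamma$ with $\gamma(0)=p$. Then $0=f(p)\ge\tfrac12\bigl(f(\gamma(t))+f(\gamma(-t))\bigr)-\tfrac{\lambda}{2}t^2\ge ct-\tfrac{\lambda}{2}t^2>0$ for small $t>0$, a contradiction: semiconcave functions have no cone-type minima. If instead $f$ is semiconcave only on the punctured ball, like $d_p$, its concavity constant is of order $1/d_p$ (already in $\mathbf{R}^N$). Then $\int\lambda\,dt$ diverges along flow lines entering $p$, and the bound $e^{\lambda r}\le 2$ is false. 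A fixed constant $L>1$ at each dyadic scale gives only H\"older control after iteration.

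The fallback does not help. The radial homotopy $((t,\xi),s)\mapsto((1-s)t,\xi)$ contracts $K_pX$ and is $1$-Lipschitz, with no input from $\Sigma_pX$. So the induction hypothesis is idle, and everything rests on transferring a contraction from $K_pX$ to $B(p,r)$, which is exactly what you ruled out at the start. Gromov--Hausdorff approximations are discontinuous, Perelman's stability homeomorphisms carry no known Lipschitz bounds, and an $\epsilon r$-isometry says nothing below scale $\epsilon r$.

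The missing idea is a semiconcave function that \emph{increases} toward $p$. Perelman constructs near any $p$ strictly concave functions $g$ whose small superlevel sets $V$ are compact convex neighbourhoods of $p$. The ascending flow $\Phi_t$ of $g$ has the following properties:
\begin{itemize}
\item it preserves $V$ and fixes the maximum point $m$;
\item it is $e^{\lambda t}$-Lipschitz with $\lambda<0$;
\item by the semigroup property, its speed satisfies $|\nabla g|(\Phi_tx)\le e^{\lambda t}|\nabla g|(x)$.
\end{itemize}
Hence $H(x,s)=\Phi_{\log(1-s)/\lambda}(x)$, with $H(\cdot,1)\equiv m$, is $1$-Lipschitz in $x$ and $\operatorname{Lip}(g)/|\lambda|$-Lipschitz in $s$. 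Then slide $m$ to $p$ along a shortest path in $V$.

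The uniform $C$ in the statement still requires doing this at every small scale with scale-invariant constants. Your Step~3 only gestures at that. Moreover, the spaces of directions are $(N-1)$-dimensional and can degenerate, so the compactness it invokes is not available as stated.
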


This property also implies that continuous maps can be uniformly approximated by Lipschitz maps, and that local homology groups behave essentially like those of manifolds.


\subsection{Lipschitz approximation of continuous maps}\label{sec:Lip-approx}
The Ambrosio--Kirchheim \cite{ambrosio2000currents} theory of metric currents requires Lipschitz maps to define metric differentials and pushforward currents. 
To apply degree-theoretic and homological arguments to continuous maps, we will now explain Lipschitz approximations within continuous homotopy classes. 
Let $X$ and $Y$ be compact finite-dimensional Alexandrov spaces with curvature bounded below. 
A consequence of  \Cref{thm:MY-Alex-sllc}, continuous maps between $X$ and $Y$ can be locally modified via Lipschitz homotopies. 
Therefore, a continuous map $f: X \to Y$ can be uniformly approximated by, and is therefore homotopic to, a Lipschitz map $f': X \to Y$.
 Hence we may assume, without loss of generality that any continuous map between such spaces may been replaced by a homotopic Lipschitz representative.

\subsection{Orientability}

For any point $x$ in $X$, the local $N$-homology group at $x$ is by definition:
$$H_N(X, X \setminus \{x\}; \mathbf{Z}) \cong H_{N-1}(\Sigma_x X; \mathbf{Z})$$
The isomorphism may be seen using the conic structure of neighborhoods.
If $x \in \Xreg $, then $\Sigma_x X \cong \mathbf{S}^{n-1}$, so $H_N(X, X \setminus \{x\}) \cong \mathbf{Z}$.
If $x \in S_X$, the structure of $\Sigma_x X$ is more complex. 
However, if $X$ is an $n$-dimensional Alexandrov space, $\Sigma_x X$ is an $(n-1)$-dimensional Alexandrov space. 

\begin{definition}[Mitsuishi  \cite{mitsuishi2016orientability}]\label{def:Alex-orient}
An $N$-Alexandrov space $X$ is orientable if the local homology is constant, i.e., there exists a fundamental class $[X] \in H_{N}(X; \Z)$ that spans the local homology at every point.
Meaning that that for every $x$ in $X$, the natural map $H_{N}(X) \to H_{N}(X, X \setminus \{x\})$ sends $[X]$ to a generator of the local group.
\end{definition}

Orientability in Alexandrov spaces has been explored through various definitions, for example by Harvey and Searle \cite{harvey2017orientation}.

\subsection{Universal Covering of Alexandrov Spaces}\label{sec:Alex-universal}

Consider an Alexandrov space $(X, d)$ with curvature bounded below by $\kappa$. 
The structural theory of Burago, Gromov, and Perelman \cite{burago1992ad} guarantees that every point $x$ in $X$ admits a neighborhood that is locally homeomorphic to the tangent cone $K_x X$. 
As metric cones are contractible, it follows that $X$ is locally contractible and, consequently, semi-locally simply connected. 
Thus, by standard covering space theory, $X$ admits a simply connected universal covering space $\til{X}$ equipped with a covering map $\pi \colon \til{X} \to X$.

We can endow $\til{X}$ with a pull-back length metric $\til{d}$ such that $\pi$ becomes a local isometry. 
The definition of an Alexandrov space having curvature bounded below by $\kappa$ is inherently local, relying on Toponogov triangle comparisons in sufficiently small neighborhoods. 
Because $\pi$ is a local isometry, $( \til{X}, \tilde{d} \,) $ inherits this $\kappa$ lower curvature bound. 
Therefore, the universal cover $(\til{X}, \tilde{d} \, )$ is also an Alexandrov space, with the same curvature bound $\kappa$.

\subsection{Lipschitz Volume Rigidity of Alexandrov spaces}

The following theorem translates optimal measure equality into isometric equivalence.
It is known as Lipschitz volume rigidity, and was shown for Alexandrov spaces by  Li--Wang \cite{li2014lipschitz} and Li \cite[Theorem A, Corollary 0.1]{li2015lipschitz}. 

\begin{theorem}[{Lipschitz measure rigidity, \cite{li2014lipschitz, li2015lipschitz}}]\label{thm:Lip-Vol-Rigidity}
Let $X$ and $Y$ be closed $N$-dimensional Alexandrov spaces. If there exists a surjective $1$-Lipschitz map $g \colon X \to Y$ such that
   $ \mathcal{H}^N(X) = \mathcal{H}^N(f(X))$,
then $X$ is isometric to $Y$, and the map $g$ is a metric isometry.
\end{theorem}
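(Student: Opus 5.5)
The plan is to first upgrade the volume equality into a statement about the metric Jacobian and the multiplicity of $g$. Then I would show that $g$ is an infinitesimal isometry almost everywhere. The last and hardest step is to pass from this infinitesimal information to global distance preservation. Throughout I read the hypothesis as $\mathcal{H}^N(X)=\mathcal{H}^N(g(X))=\mathcal{H}^N(Y)$, using surjectivity.

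\textbf{Jacobian and multiplicity.} By Section~\ref{sec:N-rect-Alex}, both $X$ and $Y$ are countably $N$-rectifiable. Hence Kirchheim's area formula applies to the Lipschitz map $g$:
\[
\int_X \mathbf{J}g \, d\mathcal{H}^N=\int_Y \#g^{-1}(y)\, d\mathcal{H}^N(y).
\]
Since $g$ is $1$-Lipschitz, its metric differential has operator norm at most one, so $\mathbf{J}g\le 1$ almost everywhere. Surjectivity gives $\#g^{-1}(y)\ge 1$ for every $y$. Combining these,
\[
\mathcal{H}^N(Y)\le \int_Y \#g^{-1}\,d\mathcal{H}^N=\int_X\mathbf{J}g\,d\mathcal{H}^N\le \mathcal{H}^N(X)=\mathcal{H}^N(Y).
\]
All the inequalities are therefore equalities. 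This yields $\mathbf{J}g=1$ $\mathcal{H}^N$-a.e. on $X$ and $\#g^{-1}(y)=1$ for $\mathcal{H}^N$-a.e.\ $y\in Y$. It also yields $g_\#\mathcal{H}^N_X=\mathcal{H}^N_Y$: applying the same chain of inequalities to $A$ and to $X\setminus A$ gives $\mathcal{H}^N(g(A))=\mathcal{H}^N(A)$ for every Borel set $A$.

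\textbf{Infinitesimal isometry.} By Theorem~\ref{thm:AlexStructure}, for a.e.\ $x\in X$ both $x$ and $g(x)$ are regular points. At such a point the metric differential of $g$ is a linear map $\mathbf{R}^N\to\mathbf{R}^N$ of norm at most $1$ with $|\det|=1$. Such a map is an orthogonal map. So $g$ is an infinitesimal isometry on a set of full measure.

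\textbf{Global distance preservation.} I expect this step to be the main obstacle, since a.e.\ infinitesimal data does not by itself control distances globally. The quantity to establish is the reverse ball inequality
\[
\mathcal{H}^N(B(g(x),r))\le \mathcal{H}^N(B(x,r)).
\]
The forward inclusion $g(B(x,r))\subset B(g(x),r)$ is immediate from the Lipschitz bound. For the reverse inequality I would try a Lipschitz lifting argument. Fix a generic geodesic $\sigma$ in $Y$ issuing from $g(x)$. For almost every choice, $\sigma$ meets the set of non-injectivity or singular points only in a null set of times; this is a Fubini argument in geodesic polar coordinates, valid in Alexandrov spaces. Near regular points where the differential is orthogonal, the map $g$ should be locally invertible with $1$-Lipschitz inverse, using the a.e.\ injectivity together with the measure preservation. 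Lifting $\sigma$ piece by piece then gives a curve in $X$ from $x$ of the same length.

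Granting the reverse ball inequality, I would argue by contradiction. Suppose $d(g(x),g(x'))<d(x,x')$. Pick $r$ with $d(g(x),g(x'))<r<d(x,x')$ and $\delta>0$ small. Then $g^{-1}(B(g(x),r))$ contains the disjoint union of $B(x,r-\delta)$ and $B(x',\delta)$. Measure preservation then gives
\[
\mathcal{H}^N(B(g(x),r))\ge \mathcal{H}^N(B(x,r-\delta))+\mathcal{H}^N(B(x',\delta)).
\]
Letting $\delta\to 0$ along a suitable sequence, and using that Alexandrov volumes of balls of radius $\delta$ are comparable to $\delta^N$, this should contradict the reverse ball inequality. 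Hence $g$ is distance preserving. An isometric embedding of compact spaces which is surjective is an isometry, so $X$ is isometric to $Y$ via $g$.
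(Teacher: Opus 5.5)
Your first two steps are sound. The area-formula chain gives $\mathbf{J}g=1$ a.e., $\#g^{-1}(y)=1$ for a.e.\ $y$, and $g_\#\mathcal{H}^N_X=\mathcal{H}^N_Y$. A linear map of norm $\le 1$ with $|\det|=1$ is orthogonal. Note that regularity of $g(x)$ for a.e.\ $x$ comes from the push-forward identity, not from Theorem~\ref{thm:AlexStructure} alone.

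The genuine gap is the third step, and it carries the whole content of the theorem. Given the corrected contradiction argument below, the reverse ball inequality for all $x$ and $r$ is equivalent to $g$ being an isometry. Your mechanism for proving it does not work, for three reasons.
\begin{enumerate}
\item The claim that near a regular point with orthogonal differential $g$ is locally invertible with $1$-Lipschitz inverse is unsupported. There is no inverse function theorem for Lipschitz maps, and differentiability at a point says nothing about a neighbourhood. Moreover, a regular point of an Alexandrov space need not have any neighbourhood of regular points, since the singular set can be dense.
\item Even for Lipschitz maps between Euclidean domains with $Dg\in O(N)$ a.e.\ and $g$ a.e.\ injective, local isometry is a real rigidity theorem. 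Folding maps show one must first force $\det Dg$ to have constant sign, and then Reshetnyak's theorem for $Dg\in SO(N)$ is needed. Nothing of this kind is available for the low-regularity Riemannian structure of $X_{\rm reg}$.
\item The piece-by-piece lifting fails too. The null set of bad times along $\sigma$ may be dense, so its complement is not a union of intervals whose lifts glue to a continuous curve. Without a Lipschitz or absolute-continuity bound on the local inverses, an a.e.\ bound on the metric derivative of a lift does not control its length: a Cantor-staircase perturbation has a.e.\ derivative $1$ and can be arbitrarily long.
\end{enumerate}
What is missing is a way to convert a.e.\ infinitesimal information into \emph{everywhere} information, for instance injectivity of $g$ and isometric blow-ups of $g$ at every point; an induction on dimension through spaces of directions is a natural way to get the latter. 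This matters because an everywhere bound on metric derivatives integrates to a Lipschitz bound, while an a.e.\ bound on a merely continuous inverse does not. The paper gives no argument here: it imports the statement from \cite{li2014lipschitz, li2015lipschitz}, where exactly this passage is the main work.

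Separately, the closing contradiction is wrong as written. Combined with the reverse ball inequality, your display only gives $\mathcal{H}^N\bigl(B(x,r)\setminus B(x,r-\delta)\bigr)\ge\mathcal{H}^N(B(x',\delta))$. The annulus has measure of order $\delta$, far larger than $\delta^N$, so letting $\delta\to0$ yields nothing. The repair is simply not to shrink the ball at $x$. For $0<\delta\le\min\{d(x,x')-r,\ r-d(g(x),g(x'))\}$, the balls $B(x,r)$ and $B(x',\delta)$ are disjoint and both lie in $g^{-1}(B(g(x),r))$. Measure preservation then gives $\mathcal{H}^N(B(g(x),r))\ge\mathcal{H}^N(B(x,r))+\mathcal{H}^N(B(x',\delta))>\mathcal{H}^N(B(x,r))$ with no limit needed. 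With this fix, your reduction of the theorem to the reverse ball inequality is correct; it is that inequality which remains unproved.
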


\subsection{Doubling, Gluing, and Cone-Manifolds}

By Perelman's doubling theorem \cite[Theorem 5.2]{Perelman91}, if $Z$ is a compact Alexandrov space with curvature bounded below by $\kappa$ and locally convex boundary $\partial Z$, its metric double $DZ = Z \cup_{\partial Z} Z$ is an Alexandrov space without boundary with curvature bounded below by $\kappa$. 
The requirement of local convexity is essential, preventing geodesics that take shortcuts across the boundary interface, which would otherwise result in thin triangles disobeying the synthetic lower curvature bound. 
This construction allows boundary-value problems to be analysed using closed spaces.
This result applies directly to convex Riemannian manifolds and can be extended to $\epsilon$-neighborhoods of convex cores, shown by Storm \cite[Theorem 5.3]{Storm-2002}. 

Metric doubling of these neighborhoods, strictly convex, $C^{1,1}$-regular, gives a controlled mechanism to transition from infinite-volume geometrically finite hyperbolic manifolds to closed, curvature-bounded Alexandrov spaces.

More generally, by Petrunin's gluing theorem \cite{Petrunin-1997}, gluing two Alexandrov spaces with curvature $\ge \kappa$ along isometric, locally convex boundaries preserves the lower curvature bound. 
This theorem establishes a robust cut-and-paste method for synthetic lower curvature bounds, generalizing Perelman's doubling to asymmetric amalgamations.

Finally, by the structure theory of Burago, Gromov, and Perelman \cite[p. 7]{burago1992ad}, a cone-manifold with cone angles $\le 2\pi$ and regular-part sectional curvatures $\ge \kappa$ is an Alexandrov space with curvature bounded below by $\kappa$. 
This result bridges piecewise-Riemannian geometry with Alexandrov structure theory. 
The restriction to cone angles $\le 2\pi$  prevents the concentration of negative curvature on the singular strata, thus making sure that Toponogov's comparison inequalities still hold globally.

\subsection{Orbifolds, Convex Cores, and Mapping Tori}
Let $(X, g_{E})$ be a closed $4$-dimensional Riemannian orbifold with an Einstein metric $g_{E}$ scaled so $\operatorname{Ric}(g_E) = -3g_E$.
Let $\chi_{{\rm orb}}(X)$ denote the orbifold Euler characteristic of $X$.
Then, because the squared norm of the Weyl tensor is non-negative, Satake's  \cite{Satake57} version of the Chern--Gauss--Bonnet theorem implies $$\operatorname{Vol}(X, g_E) \le \frac{4\pi^2}{3}\chi_{{\rm orb}}(X).$$
 Assume $X$ is a degree-$d$ cyclic branched cover $X = M_d / \mathbf{Z}_d$ over a codimension-two submanifold $\Sigma \subset M$.
 Then the Riemann--Hurwitz formula for branched covers (see Thurston \cite[Chapter 13]{Thurston-1980}) yields 
 $$\chi_{{\rm orb}}(X) = \chi(M) - \left(1 - \frac{1}{d}\right) \chi(\Sigma).$$

In $3$-dimensional hyperbolic geometry, the volume of convex cores obeys specific bounds. 
Storm \cite[Theorem 2.6]{Storm-2007-Duke} showed that the volume of the metric double $DC_{M_{{\rm geod}}}$ of the convex core of a convex cocompact hyperbolic $3$-manifold $M_{{\rm geod}}$ with totally geodesic boundary equals the simplicial volume of its topological double: 
$$\operatorname{Vol}(DC_{M_{\rm geod}}) = \|DM\|$$

\medskip

We briefly recall relevant geometric structures on Teichm\"uller space. 
For a given closed surface $S$, the \emph{Weil--Petersson distance}, denoted $d_{WP}$, is the Riemannian distance function on the Teichm\"uller space $\mathcal{T}(S)$ induced by the Weil--Petersson metric. 
The cotangent space at any point $X \in \mathcal{T}(S)$ is canonically identified with the space of holomorphic quadratic differentials on $X$, and the Weil--Petersson metric is defined via the $L^2$-inner product of these differentials with respect to the unique hyperbolic metric in the conformal class.

In the setting of a convex cocompact hyperbolic $3$-manifold $M'$, the renormalized volume $V_R$ acts as a smooth functional defined on the deformation space of $M'$, which is parameterized by the Teichm\"uller space of its conformal boundary, $\mathcal{T}(\partial_c M')$. 
The \emph{Weil--Petersson gradient flow} is the continuous dynamical system on $\mathcal{T}(\partial_c M')$ driven by the gradient of this renormalized volume functional with respect to the Weil--Petersson metric.

Bridgeman, Brock, and Bromberg \cite[Theorem C]{Bridgeman-2023} provided lower bounds for convex cores using the Weil--Petersson distance: 
\begin{equation}\label{eq:WP-convex-core-bound}
    \operatorname{Vol}(C_{M'}) - \operatorname{Vol}(C_{M'_{geod}}) \ge A(\partial M') \left( d_{WP}(\partial_c M', \partial_c M'_{geod})  \right) - \delta,
\end{equation}
where $A(\partial N') > 0$ is a constant depending exclusively on the topology of the boundary $\partial M'$, and $\delta > 0$ is a universal constant. 
These parameters, $A(\partial M') > 0$ and $\delta$, are proven to exist via analytic bounds on the Weil--Petersson gradient flow \cite{Bridgeman-2023}.
 
 \medskip
  
 Let $V_{{\rm oct}}$ be the volume of a regular ideal octahedron.
Consider an end-periodic homeomorphism of an infinite-type surface $f \colon S \to S$.
We denote by $\tau(f)$ the asymptotic translation length of $f$. This quantity measures the translation distance of $f$ acting on the pants graph $\mathcal{P}(S)$ of the surface. Because $S$ is of infinite type, $\mathcal{P}(S)$ is disconnected. However, the end-periodic assumption guarantees the existence of a non-empty subgraph $\mathcal{P}_f(S) \subset \mathcal{P}(S)$ whose connected components $\Omega$ are invariant under a positive power of $f$. For a vertex $x \in \Omega$, the translation length on that specific component is defined by the limit inferior
\[
    \tau(f, \Omega) = \liminf_{n \to \infty} \frac{d_{\mathcal{P}}(x, f^n(x))}{n},
\]
where $d_{\mathcal{P}}$ is the path metric on $\Omega$. The overall asymptotic translation length $\tau(f)$ is then defined as the infimum of $\tau(f, \Omega)$ over all such invariant components in $\mathcal{P}_f(S)$.

Field, Kim, Leininger, and Loving \cite[Theorem 1.1]{Field-2023} found the following bound on the infimal volume $\underline{\operatorname{Vol}}(\overline{M}_{f})$ of convex hyperbolic metrics on mapping tori $M_{f}$ of $f$, in terms of $V_{{\rm oct}}$ and $\tau(f)$: 
\begin{equation}\label{eq:MapTori-bound}
    \underline{\operatorname{Vol}}(\overline{M}_{f}) \le V_{{\rm oct}}\tau(f).
\end{equation}

\subsection{Homological Asymptotic Invariants and Pseudomanifolds}\label{sec:hom-inv}
Let us briefly recall why closed $N$-dimensional Alexandrov spaces are topological pseudomanifolds. 

Let $X$ be an $N$-dimensional Alexandrov space with curvature bounded below. We denote by $\mathcal{S}_X$ the singular set of $X$, which consists of all points in $X$ whose tangent cone is not isometric to the Euclidean space $\mathbf{R}^N$.

According to the structural theorems of Burago, Gromov, and Perelman \cite{burago1992ad}, the Hausdorff dimension of the singular set satisfies $\dim_{\mathcal{H}}(\mathcal{S}_X) \le N-1$. 
The $(N-1)$-dimensional stratum of the singular set actually corresponds exactly to the boundary of the space, $\partial X$. 
Therefore, when $X$ is a closed Alexandrov space (compact and with empty boundary),  the Hausdorff dimension of the singular set $\mathcal{S}_X$ satisfies,
\begin{equation}\label{eq:hausdorff-N-2}
    \dim_{\mathcal{H}}(\mathcal{S}_X) \le N-2.
\end{equation}

In spite of the their metric singularities, Alexandrov spaces enjoy tractable topological features. 
One such topological regularity is expressed by Perelman's triangulation theorem, bridging into combinatorial topology.

\begin{theorem}[\cite{Perelman91}]\label{thm:perelman-triangulation}
Let $X$ be an $N$-dimensional Alexandrov space with curvature bounded from below. 
Then $X$ is homeomorphic to a locally finite simplicial complex. In particular, if $X$ is a compact Alexandrov space, then $X$ is homeomorphic to a finite simplicial complex.
\end{theorem}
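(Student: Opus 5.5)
We argue by induction on $N$, combining three ingredients: Perelman's conical neighbourhood theorem, the fact that spaces of directions are lower-dimensional Alexandrov spaces, and Perelman's Morse theory for admissible functions. The last ingredient is meant to replace the arbitrary gluing of local cone charts by a controlled handle decomposition. For $N=1$ the claim is trivial, since $X$ is a circle, an interval or a line. For $N=2$, Alexandrov surfaces are topological $2$-manifolds, possibly with boundary, and these are triangulable by Radó's theorem. Assume now that every Alexandrov space of dimension $<N$ is homeomorphic to a locally finite simplicial complex. For each $p\in X$, the space of directions $\SigmaP$ is a compact $(N-1)$-dimensional Alexandrov space of curvature $\ge 1$. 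By the induction hypothesis it is homeomorphic to a finite simplicial complex $L_p$. Perelman's stability theorem gives an $r_p>0$ such that $B(p,r_p)$ is homeomorphic to the open cone $C(\SigmaP)\cong C(L_p)$, and the cone $C(L_p)$ is a polyhedron. So every point has a polyhedral cone neighbourhood.

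The naive next step, covering $X$ by such charts and patching the triangulations, fails. The transition maps are only homeomorphisms, and for $N\ge 5$ this patching problem contains the triangulation problem for topological manifolds, which has non-triangulable counterexamples. We therefore build the triangulation globally through Morse theory instead of through a chart atlas.

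First, we take an exhaustion of $X$ by sublevel sets of a proper admissible function $F$, built from sums of distance functions as in Perelman's Morse theory. In a neighbourhood of any point $p$, we choose $F$ to be $d_p$ perturbed to be strictly concave near $p$. Perelman's fibration theorem then makes the region between two regular values a topological product, and this product structure respects the stratification by extremal subsets. Second, near each critical point $q$, the set $\{F\le F(q)+\varepsilon\}$ is obtained from $\{F\le F(q)-\varepsilon\}$ by attaching a cone over a subset of $\Sigma_q X$. This subset is cut out by the gradient data of $F$, and it is itself an Alexandrov-type or extremal subset of lower dimension. By the induction hypothesis, applied in the form that also triangulates extremal subsets compatibly, this attachment is polyhedral. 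Third, we triangulate inductively along the levels of $F$. Suppose the polyhedral structure on $\{F\le a\}$ is already constructed. The product region $\{a\le F\le b\}\cong\{F=a\}\times[a,b]$ is triangulated as a product, using the triangulation already induced on the level set $\{F=a\}$. The cone attachment at the next critical value is then joined along this common polyhedral level set. Compactness gives finitely many critical values in any bounded region, so the triangulation is locally finite, and it is finite when $X$ is compact.

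The main obstacle is the third step. The level set $\{F=a\}$ receives one structure as the boundary of the region below and another as the base of the product region above. These two structures agree only up to a homeomorphism, not up to a PL map, so we would need a \emph{relative} version of the induction. Concretely, we would need every homeomorphism between triangulated $(N-1)$-dimensional Alexandrov-type level sets produced in this way to be isotopic to a PL one. A way to secure this, which may fail, is to strengthen the inductive statement to one about the stratified MCS structure. The aim is for the level sets to be triangulated \emph{by the same Morse procedure} applied to the restriction of $F$ and of a second auxiliary function, so that both induced triangulations arise from a single common construction and the gluing map is the identity in those coordinates. Verifying that Perelman's fibrations can be chosen compatibly with such nested function systems is where we expect the real work, or a gap, to lie.
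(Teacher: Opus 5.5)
There is a genuine gap. It is the one you flag, but it is not a leftover verification: it is the whole content of the statement.

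Your first paragraph is sound. Inductively $\Sigma_p X$ is a compact polyhedron, and the conical neighbourhood theorem gives each point a neighbourhood homeomorphic to the open cone over it. That locally cone-like (MCS) structure is what Perelman's stability theorem in \cite{Perelman91} provides. As far as I know, no triangulation theorem is proved there. The paper itself offers nothing beyond the citation, so there is no argument in the paper to compare with.

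Your Morse-theoretic steps do not promote this local structure to a global triangulation, for three reasons.
\begin{enumerate}
\item Perelman's admissible functions come with a fibration theorem between regular values, but with no handle description at critical points. Critical sets need not be isolated, and nothing ensures finitely many critical values in a bounded region. The claim that crossing a critical level attaches a cone over a piece of $\Sigma_q X$ is available only in the situation of the conical neighbourhood theorem itself.
\item The level sets $\{F=a\}$ are not Alexandrov spaces, so your induction hypothesis does not triangulate them.
\item Most seriously, every homeomorphism the theory supplies comes from topological gluing arguments (Siebenmann's deformation theory for stratified sets). This holds for the conical charts and the fibration product structures alike. So the interface maps in your third step are arbitrary homeomorphisms between polyhedra. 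Requiring them to be PL up to isotopy is a Hauptvermutung-type statement, which fails in general.
\end{enumerate}

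Your proposed repair does not escape this. If the strengthened induction is phrased for MCS spaces, it is false. Topological manifolds are MCS spaces, and non-triangulable ones exist: Freedman's $E_8$ manifold in dimension four (via Casson's invariant), and Manolescu's examples in every dimension $\ge 5$. So the obstruction you noted already bites at $N=4$, not only for $N\ge 5$. If instead the induction keeps the admissible-function data, you still need a mechanism that extracts PL-compatible gluing maps from the metric. The fibration theorem, being proved by topological gluing, provides none. To my knowledge, whether Alexandrov spaces of dimension $N\ge 4$ are triangulable is an open problem.

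Your scheme does close for $N\le 3$. There the spaces of directions are $S^2$, $D^2$ or $\mathbf{R}P^2$, so $X$ is a $3$-manifold with boundary away from a discrete set of interior cone points over $\mathbf{R}P^2$. Moise's theorem plus coning then finishes.

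Finally, the paper's later use of this statement is only to obtain a fundamental class of a closed orientable Alexandrov space. No triangulation is needed for that; it is supplied by Mitsuishi's orientability theory \cite{mitsuishi2016orientability}.
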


In a closed and orientable Alexandrov spaces, because the singular locus has Hausdorff dimension at most $N-2$, and the regular locus is dense and connected, the complex is strongly connected and every $(N-1)$-simplex is a face of exactly two $N$-simplices.
Furthermore, orientable Alexandrov spaces are integral homology manifolds \cite{mitsuishi2016orientability}. 
Consequently, they are closed oriented pseudomanifolds with a fundamental class $[X]$ in $H_N(X; \mathbf{Z})$.

\subsection{Gromov--Hausdorff Convergence and Stability}

By Gromov's compactness theorem \cite{Gromov-1981, burago1992ad}, a sequence of closed Riemannian $N$-manifolds $(M_i, g_i)$ with uniform lower curvature $K(g_i) \ge \kappa$ and upper diameter bounds admits a convergent subsequence  in the Gromov--Hausdorff topology. 
Such a sequence is called \emph{non-collapsed} if there exists a positive uniform lower bound on the volumes, $\operatorname{Vol}(M_i, g_i) \ge v > 0$. 
Under this non-collapsing condition, the GH--limit $X$ is a compact $N$-dimensional Alexandrov space with curvature bounded below by $\kappa$.
 Furthermore, the top-dimensional Hausdorff measure behaves continuously with respect to the GH--convergence \cite{burago1992ad}. 
 This continuity means that the volume does not decrease nor concentrates in the limit.
 Thus $\operatorname{Vol}(M_i, g_i)$  converges to the $N$--dimensional Hausdorff measure of the singular limit space,
\begin{equation}
    \lim_{i \to \infty} \operatorname{Vol}(M_i, g_i) = \mathcal{H}^N(X).
\end{equation}
This measure-theoretic continuity operates locally too, so that the volumes of corresponding metric balls converge to the Hausdorff measure of the limiting metric balls.
Topologically, such a non-collapsed limit is remarkably rigid, the $M_i$ are eventually--- for a large enough $i$---homeomorphic to the non-collapsed GH--limit. 
This follows from Perelman's stability theorem \cite{Perelman91, Kapovitch-2007}.

\begin{theorem}[Perelman's stability theorem {\cite[Theorem 1.1]{Kapovitch-2007}}]\label{thm:perelman-stability}
Let $X$ be a compact $N$-dimensional Alexandrov space with curvature $\ge \kappa$. 
Then there exists a constant $\epsilon = \epsilon(X) > 0$ such that for any $N$-dimensional Alexandrov space $Y$ with curvature $\ge \kappa$ satisfying $d_{GH}(X, Y) < \epsilon$, $Y$ is homeomorphic to $X$.
\end{theorem}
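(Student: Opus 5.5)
This is Perelman's stability theorem, quoted from Kapovitch's exposition, so the paper invokes it rather than proving it. Below is the plan I would follow to reproduce the argument.

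I would argue by contradiction through a stronger, parametrized statement. Suppose no such $\epsilon$ exists. Then there is a sequence of $N$-dimensional Alexandrov spaces $Y_i$ with curvature $\ge \kappa$ and $d_{GH}(Y_i,X)\to 0$, none homeomorphic to $X$. Since $\dim Y_i = N = \dim X$, the sequence is non-collapsed. By the volume continuity recalled above, $\mathcal{H}^N(Y_i)\to\mathcal{H}^N(X)>0$.

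The core tool is Perelman's theory of admissible maps built from distance functions. Near each $p\in X$, choose points $a_1,\dots,a_k$ so that $F=(d_{a_1},\dots,d_{a_k})$ is regular, in Perelman's sense of noncritical, on a punctured conical neighbourhood. By the fibration theorem, $F$ restricted to a ball is a locally trivial fibration over an open subset of $\mathbf{R}^k$. Its fibres are, inductively, stratified spaces of lower complexity, with conical structure coming from the tangent cone $K_pX$. For large $i$, the $\epsilon$-approximations transport the points $a_j$ to points $a_j^i\in Y_i$. Lower curvature comparison is stable under GH-convergence, so noncriticality is an open condition and the maps $F_i=(d_{a^i_1},\dots,d_{a^i_k})$ are also regular on the corresponding regions. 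I would then prove, by induction on the dimension and on the depth of stratification, a relative statement: a homeomorphism defined on a closed subset, close to the GH-approximation and respecting the fibrations, extends to a homeomorphism on a larger domain.

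The patching uses Siebenmann's deformation of homeomorphisms on stratified sets. Homeomorphisms close to a given one, respecting the fibres of admissible maps, can be deformed so that they agree on overlaps. Gluing the local homeomorphisms over a finite cover of the compact space $X$ then yields $X\cong Y_i$, which contradicts the choice of the $Y_i$.

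I expect the main obstacle to be the inductive, fibrewise control near singular strata. One must show that the fibres of $F_i$ converge to the fibres of $F$ in a way that makes the induction hypothesis applicable. This is the technical heart of Perelman's unpublished argument, as completed by Kapovitch, and in the paper I would simply cite \cite[Theorem 1.1]{Kapovitch-2007} and \cite{Perelman91}.
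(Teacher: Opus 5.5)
Your plan agrees with the paper, which gives no proof of this statement and simply quotes \cite[Theorem 1.1]{Kapovitch-2007}. Your outline is a faithful summary of the argument given there: contradiction via a non-collapsing sequence $Y_i \to X$, admissible maps and Perelman's fibration theorem, transfer of regularity to the approximating spaces, a fibrewise induction, and gluing by Siebenmann's deformation theory for stratified spaces. One point should be phrased more carefully: regularity of $F$ passes to $F_i$ only uniformly on compact subsets of the region where $F$ is regular, not on a whole punctured neighbourhood, and this is exactly why the step near the puncture needs the inductive argument you rightly defer to the citation.
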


\section{Integral Currents}

We will now briefly review the theory of currents in metric spaces, as developed by Ambrosio and Kirchheim \cite{ambrosio2000currents}. 
Their ideas generalize de Rham currents to spaces without smooth forms.   
An $N$-dimensional metric current $T$ on $X$ is a multilinear functional on $(N+1)$-tuples of Lipschitz functions $(f, \pi_1, \dots, \pi_N)$, where $f$ is bounded and $\pi_i$ are Lipschitz.

Moreover, currents satisfy the following key axioms:
\begin{enumerate}
\item Finite Mass: $T$ defines a finite Radon measure $\llbracket T \rrbracket$. 
\item Locality: $T(f, \pi) = 0$ if $\pi_i$ is constant on the support of $f$.
\item Boundary: $\partial T$ is an $(N-1)$-current defined by $$\partial T(f, \pi_1, \dots, \pi_{N-1}) = T(1, f, \pi_1, \dots, \pi_{N-1}).$$
\end{enumerate}

The Ambrosio--Kirchheim theory replaces the space of smooth $k$-forms with a space of $(k+1)$-tuples of Lipschitz functions, written as $(f, \pi_1, \dots, \pi_k)$. 
These Lipschitz tuples formally represent the expression $f \, d\pi_1 \wedge \dots \wedge d\pi_k$ and act as \emph{metric differential forms}.
Ambrosio and Kirchheim \cite{ambrosio2000currents} then define a metric $k$-current $T$ as a real-valued multilinear functional on this space of Lipschitz tuples that is linear in each argument and satisfies specific continuity, locality, and finite mass axioms mentioned above.

We shall now recall the necessary metric current terminology, specifically the definitions of the boundary, integer rectifiable currents, and the weight of a current, following Ambrosio and Kirchheim \cite{jaramillo2021alexandrov}. 
Let $Z$ be a complete metric space, and let $\mathcal{D}^m(Z)$ denote the space of $(m+1)$-tuples $(f, \pi_1, \dots, \pi_m)$ of Lipschitz functions on $Z$, where $f$ is bounded. 
We denote the space of $m$-dimensional metric currents on $Z$ by $M_m(Z)$.

\begin{definition}[Boundary of a current, {\cite[Definition 2.25]{jaramillo2021alexandrov}}]
Let $T \in M_m(Z)$. The \textit{boundary} of $T$, denoted by $\partial T$, is the function $\partial T: \mathcal{D}^{m-1}(Z) \to \mathbf{R}$ given by
\[
    \partial T(f, \pi_1, \dots, \pi_{m-1}) = T(1, f, \pi_1, \dots, \pi_{m-1}).
\]
\end{definition}

\begin{definition}[Integer rectifiable current, {\cite[Definition 2.29]{jaramillo2021alexandrov}}]
Let $T \in M_m(Z)$. We say that $T$ is an \textit{$m$-dimensional integer rectifiable current} if it has a current parametrization, consisting of parametrizations and weight functions, $(\{\varphi_i\}, \{\theta_i\})$, satisfying the following conditions:
\begin{enumerate}
    \item The set of parametrizations $\varphi_i: A_i \subset \mathbf{R}^m \to Z$ is a countable collection of bi-Lipschitz maps such that all $A_i$ are precompact Borel measurable sets with pairwise disjoint images.
    \item The weight functions, $\theta_i \in L^1(A_i, \mathbf{N})$, are defined so that the following equalities hold:
    \[ T = \sum_{i=1}^\infty \varphi_{i\#} \llbracket \theta_i \rrbracket \quad \text{and} \quad M(T) = \sum_{i=1}^\infty M(\varphi_{i\#} \llbracket \theta_i \rrbracket). \]
\end{enumerate}
\end{definition}

\begin{definition}[Weight of a current, {\cite[Equation (2.2)]{jaramillo2021alexandrov}}]
Given an $m$-dimensional integer rectifiable current $T$ with parametrization $(\{\varphi_i\}, \{\theta_i\})$, let $\mathbf{1}_{\varphi_i(A_i)}$ be the indicator function of the image $\varphi_i(A_i)$. 
The \textit{weight} of $T$ is defined as the $L^1$ function $\theta_T: Z \to \mathbf{N} \cup \{0\}$ given by
\[
    \theta_T = \sum_{i=1}^\infty \theta_i \circ \varphi_i^{-1} \mathbf{1}_{\varphi_i(A_i)},
\]
\end{definition}

With these definitions in mind, we can state the structural results connecting the geometric current constructed from the regular set of an Alexandrov space to its homological generator.
The following theorems of Jaramillo--Perales--Rajan--Searle--Siffert situate currents as a fundamental object in the study of Alexandrov spaces. 

\begin{theorem}[Jaramillo--Perales--Rajan--Searle--Siffert {\cite[Theorem 4.6]{jaramillo2021alexandrov}}]\label{thm:jprss-4.6}
Let $(X, d)$ be an $n$-dimensional closed Alexandrov space, $T$ the $n$-current defined in \cite[Theorem 4.1]{jaramillo2021alexandrov} and $T'$ the $n$-current that generates the group $\{S \in I_n(X) \mid \partial S = 0\} \cong \Z$ from \cite[Theorem 2.37]{jaramillo2021alexandrov}. Then, either $T = T'$ or $T = -T'$. Hence, $\partial T = 0$.
\end{theorem}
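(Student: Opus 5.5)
The plan is to show that the current $T$ of \cite[Theorem 4.1]{jaramillo2021alexandrov} is a cycle in $I_n(X)$ with the right multiplicity, and then identify it with a generator. The current $T$ is built from the regular set $\Xreg$ by means of bi-Lipschitz charts (cf.\ \Cref{sec:N-rect-Alex}), with orientations chosen compatibly and weight $1$. Hence it is integer rectifiable, and $\llbracket T\rrbracket = \Hn\llcorner X$ has finite mass. The same holds for $T'$.

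The first step is to show that $\partial T = 0$, so that $T$ lies in the group $\{S\in I_n(X) : \partial S=0\}$. I would argue that $\partial T$ is supported on $\Xsing$: inside a regular chart, $T$ is the pushforward of $\llbracket 1\rrbracket$ on an open set of $\R^n$, which has no boundary there. By \eqref{eq:hausdorff-N-2} (equivalently, no boundary and $\dim_{\mathcal H}\Xsing\le n-2$), $\Xsing$ carries no $(n-1)$-dimensional Hausdorff measure. By the boundary rectifiability theorem of Ambrosio--Kirchheim, if $\partial T$ has finite mass it is concentrated on an $(n-1)$-rectifiable set of positive $\mathcal H^{n-1}$ measure or vanishes. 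It therefore vanishes.

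The main obstacle is to get normality of $T$, that is, finiteness of $M(\partial T)$. I would try the approach of \cite{jaramillo2021alexandrov}: slicing by distance functions together with the Lipschitz contractibility of \Cref{thm:MY-Alex-sllc}. This controls the mass of $\partial(T\llcorner B(x,r))$.

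Once $T$ is a cycle, the group is $\cong\Z$ by \cite[Theorem 2.37]{jaramillo2021alexandrov}, so $T = mT'$ for some $m\in\Z$. Comparing weights on $\Xreg$ decides $m$. Both $T$ and $T'$ are integer rectifiable with support of full measure. The weight $\theta_T\equiv 1$ a.e. If $|m|\ge 2$, then $\theta_{T'}=\theta_T/|m|$ would be non-integral, which is impossible. So $m=\pm1$, i.e.\ $T=\pm T'$. The vanishing $\partial T=0$ then follows, consistently, from $\partial T'=0$.
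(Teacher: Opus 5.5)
\textbf{There is a genuine gap in your first step, and everything after it depends on that step.} To write $T=mT'$ via \cite[Theorem 2.37]{jaramillo2021alexandrov}, and to invoke the Ambrosio--Kirchheim boundary rectifiability theorem, you need $T$ to be an integral current, i.e.\ $M(\partial T)<\infty$. You leave exactly this open as ``the main obstacle''.

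The tools you propose cannot close it:
\begin{itemize}
\item The slicing identity $\langle T,d_x,r\rangle=\partial(T\llcorner B(x,r))-(\partial T)\llcorner B(x,r)$, together with its mass bounds, is available only for normal currents. It therefore presupposes the normality you are trying to prove.
\item Local Lipschitz contractibility gives cone fillings of cycles. It says nothing about the boundary mass of a given finite-mass current.
\end{itemize}

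There is also a smaller inaccuracy. The set of points with Euclidean tangent cone is not open in general, so you should localize $\partial T$ using the open set of $\delta$-regular points instead. That set carries the bi-Lipschitz charts, and its closed complement $S$ has Hausdorff dimension at most $n-2$ because $\partial X=\emptyset$. The vanishing of $\partial T$ inside that open set is where the compatible choice of chart orientations, i.e.\ orientability, is used.

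\textbf{Repair 1: reverse the logic.} The statement itself indicates the intended order: ``Hence, $\partial T=0$'' makes the vanishing of the boundary a consequence of $T=\pm T'$, not an input. Start from the generator $T'$, which is already integral with $\partial T'=0$.
\begin{itemize}
\item In each bi-Lipschitz chart $U$ of the $\delta$-regular set, the Euclidean constancy theorem gives $T'\llcorner U=c_U\,(T\llcorner U)$ with $c_U\in\Z$.
\item The regular part is connected, so $c_U\equiv c$.
\item Both mass measures are absolutely continuous with respect to $\mathcal{H}^n$, and $\mathcal{H}^n(S)=0$. Hence $T'=cT$ with $c\neq0$.
\end{itemize}
Then $\partial T=c^{-1}\partial T'=0$, so $T$ lies in the cycle group. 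This gives $T=mT'$, hence $mc=1$ and $T=\pm T'$. The paper uses the same mechanism (local constancy of the multiplicity, then connectivity) in its proof of (2)$\Rightarrow$(3) of Mitsuishi's orientability theorem.

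\textbf{Repair 2: keep your order with a removability argument.} This version does not assume $M(\partial T)<\infty$.
\begin{itemize}
\item Since $\mathcal{H}^{n-1}(S)=0$, cover $S$ by balls $B(x_j,r_j)$ with $\sum_j r_j^{n-1}<\eta$.
\item Let $\chi=\max_j\chi_j$, where $\chi_j=1$ on $B(x_j,r_j)$, $\chi_j=0$ off $B(x_j,2r_j)$, and $\mathrm{Lip}(\chi_j)\le 1/r_j$.
\item Using the Leibniz rule, write $\partial T(f,\pi)=T(1,(1-\chi)f,\pi)+T(\chi,f,\pi)+T(f,\chi,\pi)$.
\end{itemize}
The first term vanishes because $(1-\chi)f$ is supported in the charts, where $T$ has no boundary. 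For the other two, use Bishop--Gromov, $\mathcal{H}^n(B(x,r))\le Cr^n$, and the pointwise Lipschitz constant of $\chi$ (legitimate for rectifiable currents). They are bounded by $C\sum_j r_j^n$ and by $C\sum_j r_j^{-1}\mathcal{H}^n(B(x_j,2r_j))\le C\eta$ respectively. Letting $\eta\to 0$ gives $\partial T=0$.

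With either repair, your final weight comparison correctly yields $m=\pm1$.
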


\begin{theorem}[Jaramillo--Perales--Rajan--Searle--Siffert \cite{jaramillo2021alexandrov}]\label{thm:AlexIntCurrent} Let $X$ be an $N$-dimensional, closed, orientable Alexandrov space. 
Then $X$ defines an unique integer rectifiable current $T_X$ such that:
\begin{enumerate}
    \item The support of $T_X$ is $X$.
    \item The weight of $T_X$ equals $1$ for $\Hn$-almost all $x$.
    \item The boundary of $T_X$ is zero: $\partial T_X = 0$.
\end{enumerate}
\end{theorem}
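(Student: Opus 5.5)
We will build $T_X$ from the Otsu--Shioya rectifiable structure of $X$ and then identify it with the generator of closed integral $N$-currents via \Cref{thm:jprss-4.6}. By \Cref{sec:N-rect-Alex} and \Cref{thm:AlexStructure}, $\Xreg$ has full $\Hn$-measure and is covered by countably many bi-Lipschitz charts $\psi_j\colon U_j\subset\R^N\to X$. First, refine these charts into a Borel partition, taking $A_i\subset U_{j(i)}$ precompact Borel sets with pairwise disjoint images $\varphi_i(A_i)$ covering $\Xreg$ up to an $\Hn$-null set. Second, use the orientation of $X$ in the sense of \Cref{def:Alex-orient}: the fundamental class $[X]$ restricts to a generator of $H_N(X,X\setminus\{x\})\cong\Z$ at every point. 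Composing each chart with a reflection of $\R^N$ when necessary, we make every $\varphi_i$ orientation-preserving on its domain; this is meaningful on the open manifold part $\Xreg$, where local homology is induced by the charts. Third, we define
\[
T_X=\sum_i \varphi_{i\#}\llbracket \mathbf{1}_{A_i}\rrbracket .
\]
Its mass is comparable to $\sum_i \mathcal{L}^N(A_i)$, which is controlled by $\Hn(X)<\infty$ through the bi-Lipschitz constants. So $T_X$ is an integer rectifiable current with weight $\theta\equiv 1$ $\Hn$-a.e., which gives item (2). The support is the closure of $\Xreg$, namely $X$, by density in \Cref{thm:AlexStructure}(1); this gives item (1). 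This current is precisely the current of \cite[Theorem 4.1]{jaramillo2021alexandrov}.

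The main obstacle is item (3), $\partial T_X=0$, since the charts need not match along the singular set. We take this directly from \Cref{thm:jprss-4.6}: $T_X=\pm T'$, where $T'$ generates the cycle group $\{S\in I_N(X):\partial S=0\}\cong\Z$, and hence $\partial T_X=0$.

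If an independent argument is wanted, it would go as follows. The boundary $\partial T_X$ is an $(N-1)$-dimensional flat-type current. On $\Xreg$ it vanishes, because consistently oriented charts glue to the orientation of a topological manifold. Its support is therefore contained in $\Xsing$. By \eqref{eq:hausdorff-N-2}, $\mathcal{H}^{N-1}(\Xsing)=0$, since $X$ has no boundary. We would then apply the Ambrosio--Kirchheim boundary rectifiability theorem, which says that a normal current whose boundary has finite mass has an $(N-1)$-rectifiable boundary. Finiteness of the mass of $\partial T_X$ would come from local Lipschitz contractibility (\Cref{thm:MY-Alex-sllc}), through a cone construction near singular points. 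Together with $\mathcal{H}^{N-1}(\Xsing)=0$, this forces $\partial T_X=0$.

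Uniqueness then follows from $\{S\in I_N(X):\partial S=0\}\cong\Z$. Any current with properties (1)--(3) is $mT'$ for some integer $m$, and weight one forces $m=\pm1$. Matching the sign with the chosen orientation singles out $T_X$.
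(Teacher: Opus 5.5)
Your proposal is correct and follows the same route as the paper, which simply imports the result from \cite{jaramillo2021alexandrov}: the weight-one current comes from an oriented bi-Lipschitz atlas as in their Theorem 4.1, and $\partial T_X=0$ comes from \Cref{thm:jprss-4.6}. Uniqueness holds only up to the orientation sign, because closed integral $N$-currents form a copy of $\Z$, and you handle this correctly.

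Your optional ``independent'' sketch would not stand on its own. Finiteness of the mass of $\partial T_X$ is the real content, and local Lipschitz contractibility does not deliver it. Also, $\Xreg$ need not be open (its complement can be dense), so the orientation and support arguments should be run on the open $\delta$-regular set instead.
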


Explicitly, for a tuple of Lipschitz functions $(f, \pi_1, \dots, \pi_N)$ on an Alexandrov space:
\begin{equation}
    T_X(f, \pi) = \int_{\Xreg} f(x) \det(d_x \pi) \, d\Hn(x).
\end{equation}
Here, $d_x \pi$ is the differential of the map $\pi = (\pi_1, \dots, \pi_n): X \to {\bf R}^N$ at the regular point $x$, where the tangent cone is identified with ${\bf R}^N$ via a chosen orientation.
The weight is exactly 1 because the Hausdorff measure on $\Xreg$ is normalized to coincide with the Euclidean Lebesgue measure in the tangent charts.

To relate the properties of metric currents to algebraic topology, we organize the groups of integral currents into a chain complex. Following Ambrosio and Kirchheim (see  \cite[Section 3.3]{mitsuishi2019homologies} and \cite[Section 2.4]{jaramillo2021alexandrov}.
 Let $I_k^c(X)$ denote the abelian group of $k$-dimensional integral currents on a metric space $X$ with compact support. 

By the boundary rectifiability theorem, the boundary of an integral current is also an integral current. 
So the boundary operator $\partial$ restricts to a well-defined group homomorphism $\partial_k: I_k^c(X) \to I_{k-1}^c(X)$. 
The boundary operator satisfies $\partial_{k-1} \circ \partial_k = 0$,  and hence the graded group $$I_\bullet^c(X) = \bigoplus_{k=0}^\infty I_k^c(X).$$ equipped with these boundary maps forms a chain complex.

\begin{definition}[Current homology, {\cite{mitsuishi2019homologies, jaramillo2021alexandrov}}]
The \textit{current homology} of a metric space $X$ in degree $k$, denoted $H_k^{\mathbf{I}}(X)$, is defined as the $k$-th homology group of the chain complex of integral currents with compact support $(I_\bullet^c(X), \partial)$:
\[
    H_k^{\mathbf{I}}(X) = \frac{\ker\left(\partial_k: I_k^c(X) \to I_{k-1}^c(X)\right)}{\operatorname{im}\left(\partial_{k+1}: I_{k+1}^c(X) \to I_k^c(X)\right)}.
\]
\end{definition}

When the space $X$ is compact, every current automatically has compact support. 
In this case, the chain group $I_k^c(X)$ coincides exactly with the group of all integral currents $I_k(X)$
Notably, for spaces that are locally Lipschitz contractible, the homology group of the chain complex of integral currents, $H_N^{\mathbf{I}}(X)$, is isomorphic to the singular homology $H_N(X; \Z)$ \cite[Theorem 1.3]{mitsuishi2019homologies}. 
Mitsuishi and Yamaguchi \cite{mitsuishi2014llc} showed that Alexandrov spaces satisfy this condition. 
To define this isomorphism explicitly (without cosheaves), we use the complex of singular Lipschitz chains $S_\bullet^{\operatorname{Lip}}(X)$. 
As $X$ is locally Lipschitz contractible, the natural inclusion $S_\bullet^{\operatorname{Lip}}(X) \hookrightarrow S_\bullet(X)$ induces an isomorphism on homology, $H_N^{\operatorname{Lip}}(X) \cong H_N(X; \Z)$. 
A Lipschitz $k$-simplex $\sigma: \Delta^k \to X$ then defines an integral current, by pushing forward the standard Euclidean current $\llbracket \Delta^k \rrbracket$ of the simplex, $ [\sigma] = \sigma_\# \llbracket \Delta^k \rrbracket$.
Extending this linearly produces a chain map $[\cdot]: S_\bullet^{\operatorname{Lip}}(X) \to I_\bullet^c(X)$. 
By Mitsuishi  \cite[Theorem 1.3]{mitsuishi2019homologies}, the induced map on homology $[\cdot]_*: H_N^{\operatorname{Lip}}(X) \to H_N^{\mathbf{I}}(X)$ is an isomorphism. 
Denote the inverse of this composed isomorphism by $\Psi: H_N^{\mathbf{I}}(X) \to H_N(X; \Z)$.
Under this isomorphism, the current $T_X$ maps to the fundamental class $[X]$. We formulate this fact in the following lemma.

\begin{lemma} \label{lem:fundamental-class-current}
Let $X$ be a closed, oriented $N$-dimensional Alexandrov space, and let $T_X$ in $ H_N^{\mathbf{I}}(X)$ be the fundamental current of $X$.
Under the isomorphism $\Psi$, the homology class of the current $T_X$ maps to the fundamental class $[X]$.
\end{lemma}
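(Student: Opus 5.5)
We need to show $\Psi([T_X]) = [X]$. We reduce to local degree computations, using that $H_N(X;\Z)\cong\Z$ for a closed oriented pseudomanifold (\Cref{sec:hom-inv}) and that $H_N^{\mathbf{I}}(X)\cong\Z$ is generated by $T_X$ up to sign (\Cref{thm:jprss-4.6}).

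\textbf{Step 1: $\Psi$ sends $[T_X]$ to $\pm[X]$.} Since $\Psi$ is an isomorphism between two infinite cyclic groups, it maps the generator $[T_X]$ of $H_N^{\mathbf{I}}(X)$ to a generator of $H_N(X;\Z)$, that is, to $\pm[X]$. It therefore remains only to fix the sign. This is where the chosen orientation enters, both in the construction of $T_X$ and in the choice of $[X]$.

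\textbf{Step 2: a Lipschitz cycle representing $[X]$.} Use Perelman's triangulation (\Cref{thm:perelman-triangulation}) together with Lipschitz approximation via local Lipschitz contractibility (\Cref{thm:MY-Alex-sllc}). This yields a Lipschitz singular cycle $c=\sum_i \epsilon_i\sigma_i$ representing $[X]$ in $H_N^{\operatorname{Lip}}(X)$. Here the top simplices are oriented coherently, so that at a chosen regular point $x_0$ lying in the interior of a single simplex $\sigma_1$ (which we may take bi-Lipschitz onto its image near $x_0$), $c$ maps to the orientation generator of $H_N(X,X\setminus\{x_0\})$.

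Its current $[c]=\sum \epsilon_i\sigma_{i\#}\llbracket\Delta^N\rrbracket$ is an integral cycle. Hence $[c]=m\,T_X$ with $m=\pm1$ by Step 1, since $\Psi^{-1}[X]=[[c]]$ and cycles in top dimension have no boundaries to mod out.

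\textbf{Step 3: the sign via local multiplicity.} Compare the two currents near $x_0$. Take a Lipschitz chart $\pi:B(x_0,r)\to\R^N$ that is bi-Lipschitz and orientation compatible, and a bump function $f$ supported near $x_0$.

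On one side, by the explicit formula for $T_X$,
\[
T_X(f,\pi)=\int f\det(d_x\pi)\,d\Hn,
\]
where $\det(d_x\pi)>0$ almost everywhere by the orientation convention in the definition of $T_X$.

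On the other side, by the area formula,
\[
[c](f,\pi)=\epsilon_1\int_{\Delta^N} f\circ\sigma_1\,\det D(\pi\circ\sigma_1)\,dx.
\]
Its sign is the local degree of $\sigma_1$ at $x_0$, which is $+1$ because $c$ represents the orientation generator locally. Since both expressions are positive for $f\ge0$ nontrivial, we get $m=+1$.

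\textbf{Main obstacle.} The hardest point is making rigorous that the sign of $\det D(\pi\circ\sigma_1)$ (the analytic orientation) agrees with the homological local orientation at $x_0$. Near a regular point we would handle this using the bi-Lipschitz manifold charts of Otsu--Shioya, reducing to the Euclidean fact that a bi-Lipschitz homeomorphism of an open set of $\R^N$ has Jacobian sign equal to its local topological degree almost everywhere.
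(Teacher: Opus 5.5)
This is essentially the paper's argument: both reduce to $T_X=\pm[c]$ for a Lipschitz cycle $c$ representing $[X]$ (the paper via \Cref{thm:jprss-4.6}, you via $\Psi$ sending generators to generators) and then fix the sign by orientation compatibility, where the paper simply \emph{chooses} the orientation of $\Xreg$ to match the topological one and your local multiplicity computation makes this explicit, showing that compatibility at a single regular point already forces the $+$ sign. One loose end: a simplex that is bi-Lipschitz near $x_0$ does not come for free from Perelman's merely topological triangulation plus Lipschitz approximation, so either build the cycle around a chart simplex using that $[X]$ maps to the generator of $H_N(X,X\setminus\{x_0\})$, or drop bi-Lipschitzness and use the a.e.\ degree formula $\deg(g,y)=\sum_{g(x)=y}\operatorname{sgn}\det Dg(x)$ for Lipschitz $g$.
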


\begin{proof}
Because $X$ is an $N$-dimensional compact space, there are no non-trivial integral $(N+1)$-currents. 
As a result, the homology group $H_N^{\cur}(X)$ consists exactly of the integral $N$-currents satisfying $\partial T = 0$.

The topological orientation of $X$ determines a unique fundamental class $[X]$ in $H_N(X; \Z)$, which can be represented by a Lipschitz cycle $c = \sum_{i} a_i \sigma_i \in S_N^{\operatorname{Lip}}(X)$. 
The chain map sends $c$ to the current $T(c) = \sum_{i} a_i (\sigma_i)_\# \llbracket \Delta^N \rrbracket$. Since $\partial c = 0$ and the boundary operator commutes with the push-forward, we have $\partial T(c) = 0$. Furthermore, $[T(c)]$ acts as the generator for the current homology group $H_N^{\cur}(X) \cong \Z$.

Moreover, the geometric current $T_X$ constructed in \cite[Theorem 4.1]{jaramillo2021alexandrov} is an integer rectifiable current of weight $1$.
 In \cite[Theorem 4.6]{jaramillo2021alexandrov}, it is proved that $\partial T_X = 0$, and that any closed integral $N$-current of weight $1$ coincides with the homological generator $T(c)$, up to sign. 
 By selecting the orientation of the regular set $\Xreg$ to be compatible with the topological orientation of $X$, we obtain the exact equality $T_X = T(c)$ as currents. 
 Applying the inverse isomorphism $\Psi$ produces $\Psi([T_X]) = \Psi([T(c)]) = [X]$, completing the proof.
\end{proof}

Having painted the context of integral currents, we may now state the following result by Mitsuishi \cite[Theorem 1.8]{mitsuishi2016orientability} showing how several standard notions of orientability are equivalent in Alexandrov spaces.

\begin{theorem}[\cite{mitsuishi2016orientability}]
 For a closed $N$-Alexandrov space $X$, the following are equivalent:
\begin{enumerate}
\item $X$ is orientable (according to Definition \ref{def:Alex-orient}) .
\item $H_{N}(X; \Z) \cong \Z$.
\item $\Xreg$ is orientable as a Riemannian manifold, and the orientation extends across $S_X$.
\end{enumerate}
\end{theorem}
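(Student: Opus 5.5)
The plan is to prove the three implications $(1)\Rightarrow(2)\Rightarrow(3)\Rightarrow(1)$, using the pseudomanifold structure from \Cref{thm:perelman-triangulation} and the codimension-two bound \eqref{eq:hausdorff-N-2} on the singular set of a closed Alexandrov space.

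\emph{Step 1: $(1)\Rightarrow(2)$.} Fix a triangulation of $X$ given by \Cref{thm:perelman-triangulation}. As noted in \Cref{sec:hom-inv}, the complex is strongly connected, and every $(N-1)$-simplex is a face of exactly two $N$-simplices, because $X$ is closed. For such a pseudomanifold, a simplicial $N$-cycle must carry the same coefficient, up to the orientation sign, on any two $N$-simplices sharing a codimension-one face. Strong connectedness then forces $H_N(X;\Z)$ to be either $\Z$ or $0$. Under (1) there is a class $[X]$ that maps to a generator of every local group. Such a class is nonzero, so $H_N(X;\Z)\cong\Z$.

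\emph{Step 2: $(2)\Rightarrow(3)$.} A generator of $H_N(X;\Z)\cong\Z$ is represented by a simplicial cycle whose coefficients are all $\pm1$, with compatible signs across shared faces. These signs induce a coherent orientation of the open top simplices. Along the interiors of codimension-one faces, which lie in $\Xreg$, the compatibility of signs lets this orientation glue into an orientation of the topological manifold $\Xreg$. The Riemannian (Otsu--Shioya) structure on $\Xreg$ agrees topologically with this manifold structure, so $\Xreg$ is orientable. The orientation "extends across $S_X$" in the sense that the fundamental class restricts at every point to the local class. At a singular point $x$, I would verify this using $H_N(X,X\setminus\{x\})\cong H_{N-1}(\Sigma_xX)$ together with induction on dimension, since $\Sigma_xX$ is itself a closed $(N-1)$-dimensional Alexandrov space.

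\emph{Step 3: $(3)\Rightarrow(1)$.} Orient the simplices compatibly with the given orientation of $\Xreg$. Because $S_X$ has codimension at least $2$, the interiors of the $N$- and $(N-1)$-simplices can be taken to lie in $\Xreg$ after general position. The sum of these oriented $N$-simplices is then a cycle $c$. It restricts to a generator of the local homology at regular points. At singular points I would again use the cone structure and the inductive hypothesis applied to $\Sigma_xX$: the extended orientation induces a fundamental class of $\Sigma_xX$, which generates $H_{N-1}(\Sigma_xX)$. Alternatively, the current-theoretic route is available: \Cref{thm:AlexIntCurrent} and \Cref{lem:fundamental-class-current} produce $[X]$ from the weight-one current $T_X$, which is defined from the orientation of $\Xreg$.

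\emph{Main obstacle.} I expect the hardest part to be the local statement at singular points: showing that the local homology $H_N(X,X\setminus\{x\})$ is $\Z$ and is generated by the global class. This requires the conical neighborhood theorem together with the inductive orientability of $\Sigma_xX$. I would handle it by induction on $N$, with base case $N=1$ (circles) or $N=2$ (closed surfaces). If the induction becomes delicate, I would instead invoke Mitsuishi's local homology computation.
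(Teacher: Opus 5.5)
Your route is genuinely different from the paper's. The paper proves (2)$\Leftrightarrow$(3) with integral currents: Mitsuishi's isomorphism $H_N(X;\Z)\cong H_N^{IC}(X)$, the vanishing $I_{N+1}(X)=0$, and constancy of the multiplicity of a closed integral $N$-current on the connected set $\Xreg$. It reads ``the orientation extends across $S_X$'' as $\partial\llbracket\Xreg\rrbracket=0$, and it only sketches (1)$\Leftrightarrow$(2). Working simplicially has the merit of dealing with the local groups at \emph{every} point, which \Cref{def:Alex-orient} demands. The price is that everything depends on the global triangulation of \Cref{thm:perelman-triangulation}, a far heavier input than the conical neighbourhoods and local Lipschitz contractibility behind the paper's argument.

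Two steps fail as written. First, open codimension-one faces need not lie in $\Xreg$, and no general-position argument puts the open $N$- and $(N-1)$-simplices inside $\Xreg$. The triangulation is only a homeomorphism and carries no information about the metric singular set. A set of dimension $N-2$ in general position lies largely \emph{inside} open top simplices, and for $N\ge 3$ it cannot in general be pushed off the $(N-1)$-skeleton. Moreover $S_X$ can be dense (Otsu--Shioya), so $\Xreg$ need not even be open.

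What is true is that every open $N$- or $(N-1)$-simplex lies in the topological manifold part $X^{\rm top}$ (points with a neighbourhood homeomorphic to $\R^N$), and $X^{\rm top}\supset\Xreg$ by the conical neighbourhood theorem. So in Step 2, glue the orientation on the union of open $N$- and $(N-1)$-simplices, extend it over the rest of $X^{\rm top}$ (which lies in the $(N-2)$-skeleton), and restrict to $\Xreg$. In Step 3, do not try to avoid $S_X$. Use instead that a closed set of topological dimension $\le N-2$ cannot disconnect an open simplex, or the open star of an $(N-1)$-simplex (an open $N$-ball). Then an orientation on the open regular part (the $\delta$-regular points if $S_X$ is not closed) induces a well-defined orientation on each simplex, with opposite induced orientations on common faces. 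That is exactly what makes $\sum\pm\sigma$ a cycle.

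Second, the singular-point statement you flag as the main obstacle is left unproved, and the suggested induction is not set up correctly. The conical homeomorphism $U\cong K_xX$ does not send metric regular points to metric regular points, so an orientation of $\Xreg$ near $x$ does not directly induce a fundamental class of $\Sigma_xX$. In your framework the argument is short:
\begin{enumerate}
\item Subdivide so that $x$ is a vertex. Then $H_N(X,X\setminus\{x\})\cong H_{N-1}(\operatorname{Lk}(x))$.
\item The $\pm1$ fundamental cycle maps to the sum of the oriented $(N-1)$-simplices of $\operatorname{Lk}(x)$, all with coefficients $\pm1$. This is a nonzero primitive cycle.
\item We have $\tilde H_*(\operatorname{Lk}(x))\cong H_{*+1}(X,X\setminus\{x\})\cong\tilde H_*(\Sigma_xX)$.
\item Your Step 1, applied to the closed $(N-1)$-dimensional Alexandrov space $\Sigma_xX$, gives $H_{N-1}(\Sigma_xX;\Z)\in\{0,\Z\}$. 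Hence the group is $\Z$ and the image of the fundamental class generates it.
\end{enumerate}
This argument belongs to (2)$\Rightarrow$(1). With the reading of (3) you adopt in Step 2, the singular-point condition is already a hypothesis in Step 3, so nothing needs proving there.

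Finally, the current-theoretic alternative at the end of Step 3 is circular: \Cref{thm:AlexIntCurrent} and \Cref{lem:fundamental-class-current} presuppose that $X$ is oriented. The non-circular version is the paper's: build $\llbracket\Xreg\rrbracket$ from the given orientation, use (3) to get $\partial\llbracket\Xreg\rrbracket=0$, and transport the class through Mitsuishi's isomorphism.
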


The equivalence relies on the interplay between the topological structure of Alexandrov spaces and the geometric measure theory of metric currents, underpinned by the homological isomorphism established by Mitsuishi \cite[Theorem 1.3]{mitsuishi2019homologies}.
For completeness, and for the reader's convenience, we will include a proof here.

\begin{proof}
\noindent \textbf{(1) $\iff$ (2):}
This equivalence is a topological consequence of $X$ behaving homologically like a manifold on a dense open set.
 In Definition \ref{def:Alex-orient}, orientability is formulated via the triviality of the local orientation cover $x \mapsto H_N(X, X \setminus \{x\}; \Z)$.
  On a connected space this implies the existence of a global fundamental class that spans the top homology group $H_N(X; \Z) \cong \Z$.

\noindent \textbf{(2) $\iff$ (3):}
Because $X$ is a finite-dimensional Alexandrov space, it is locally Lipschitz contractible \cite{mitsuishi2014llc}. 
By Mitsuishi \cite[Theorem 1.3]{mitsuishi2019homologies}, there is a canonical isomorphism between the standard integral singular homology and the homology of integral currents with compact support,
\[
H_N(X; \Z) \cong H_N^{IC}(X).
\]
Because $X$ is a closed space, all currents have compact support. 
Since $\dim(X) = N$, there are no non-trivial $(N+1)$-currents (i.e., $I_{N+1}(X) = 0$). 
Thus, $H_N^{IC}(X)$ consists exactly of the integral $N$-currents satisfying $\partial T = 0$.

By Theorem \ref{thm:AlexStructure}, the regular set $\Xreg$ is an open Riemannian manifold of full measure, since $\Hn(S_X) = 0$. 
The space $X$ is a closed Alexandrov space, so it has no boundary ($\partial X = \emptyset$). 
This implies that the singular set $S_X$ actually has Hausdorff dimension at most $N-2$. 
Removing a closed set of codimension $\ge 2$ cannot disconnect the space, guaranteeing that $\Xreg$ is connected.

Assume (2), so $H_N(X; \Z) \cong \Z$. By the isomorphism above, $H_N^{IC}(X) \cong \Z$, meaning there exists a non-trivial integral cycle $T_X$ in $ I_N(X)$ with $\partial T = 0$ spanning the group. 
Since $\Hn(S_X) = 0$, the mass measure of $T_X$ is entirely concentrated on $\Xreg$. 
As an integral current, $T_X$ is represented by a measurable orientation on $\Xreg$ and an integer-valued multiplicity function $\theta$ in $L^1(\Xreg; \Z)$. 
For the boundary $\partial T_X$ to vanish globally, the multiplicity $\theta$ must be locally constant almost everywhere on $\Xreg$. 
Connectivity then forces $\theta$ to be a non-zero global constant (which we may normalize to $1$). 
This constant multiplicity establishes that $\Xreg$ is a continuously orientable Riemannian manifold. 
Furthermore, the condition $\partial T_X = 0$ as a current on $X$ means that the boundary of this orientation current has zero mass on $S_X$.
That is, the orientation extends across $S_X$ in the sense of currents, yielding (3).

Conversely, assume (3). We define an integer rectifiable $N$-current $T_X = \llbracket \Xreg \rrbracket$ by endowing $\Xreg$ with its orientation and assigning it a constant multiplicity of $1$. 
Since $\Hn(S_X) = 0$ and $X$ is compact, $T_X$ has finite mass and is a well-defined integral $N$-current on $X$. 
The assumption that the orientation extends across $S_X$ means that there is no boundary mass concentrated on $S_X$.
Hence  $\partial T_X = 0$ in $I_{N-1}(X)$. 
Because $\Xreg$ is connected and has full measure, any other closed $N$-current is just an integer multiple of $T_X$. 
Thus, $T_X$ spans $H_N^{IC}(X)$, and we obtain $H_N^{IC}(X) \cong \Z$. 
By Mitsuishi isomorphism \cite{mitsuishi2019homologies}, we conclude $H_N(X; \Z) \cong \Z$.
\end{proof}

The  fundamental class $[X]$ is the reference anchor allowing us to define the topological degree of a continuous map. 
Let $Y$ be an orientable $N$-Alexandrov space, and $f: X \to Y$ a continuous map.
The topological degree of $f$ is by definition the integer $k$ such that $f_*[X] = k[Y]$.

\section{Metric Differentiation and the Jacobian}

\subsection{Differentials on Metric Spaces}

Kirchheim defined the differential of a map from ${\bf R}^N$ into a metric space $X$ \cite[Theorem 1]{kirchheim1994}.
This differential, known as the metric or tangential differential, is a rigorous way to measure the first-order distance distortion of Lipschitz maps into spaces that might lack a linear structure.

\begin{definition}[Metric Differential]
Let $(X, d_X)$ be a metric space and let $f: {\bf R}^N \to X$ be a Lipschitz map. The map $f$ is said to be \textit{metrically differentiable} at a point $x$ in ${\bf R}^N$ if there exists a seminorm $\operatorname{md}(f, x)$ on ${\bf R}^N$ such that, as $y, z \to x$,
\begin{equation}
    d_X(f(y), f(z)) - \operatorname{md}(f, x)(y - z) = o(|y - x| + |z - x|).
\end{equation}
When this relation holds, the uniquely determined seminorm $\operatorname{md}(f, x)$ is called the \textit{metric differential} (or tangential differential) of $f$ at $x$.
\end{definition}

As a direct consequence of this definition, for any vector $v \in {\bf R}^n$, the seminorm evaluates to the directional metric derivative of $f$:
\begin{equation}
    \operatorname{md}(f, x)(v) = \lim_{t \to 0} \frac{d_X(f(x + t v), f(x))}{|t|}.
\end{equation}

Rademacher's theorem was generalized by Lytchak \cite{lytchak2005differentiation}. 
While Kirchheim's definition relies on a seminorm to handle arbitrary metric targets lacking a linear structure, Lytchak expanded the geometric differentiation theory to maps defined on Alexandrov spaces. 
For maps between spaces with sufficient geometric regularity, the differential is a linear map between tangent cones.

\begin{theorem}[Lytchak \cite{lytchak2005differentiation}]\label{thm:lytchak-rademacher}
Let $X$ be an $N$-dimensional Alexandrov space, $Y$ a space with Euclidean tangent cones almost everywhere (such as a Riemannian manifold or an $N$-dimensional Alexandrov space), and $f \colon X \to Y$ a locally Lipschitz map. 
Then $f$ is differentiable at $\mathcal{H}^N$-almost every point $x$ in $X$. 
At $\mathcal{H}^N$-almost every $x \in X$, the tangent cone $K_x X$ is isometric to the Euclidean space $\R^N$. 
The metric differential ${\rm md}(f,x) \colon K_x X \to K_{f(x)} Y$ exists and is a linear map between Euclidean spaces $\R^N \to \R^N$.
\end{theorem}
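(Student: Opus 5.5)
Since this statement is Lytchak's differentiation theorem, quoted from the literature, the plan is to assemble it from the local structure of Alexandrov spaces and the Euclidean Rademacher theorem rather than prove it from scratch. Concretely, we will reduce to Rademacher's theorem in bi-Lipschitz charts and then identify the resulting object with a linear map between tangent cones.

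\textbf{Step 1: charts.} By \Cref{thm:AlexStructure} and the rectifiability discussion in \Cref{sec:N-rect-Alex}, there is an $\Hn$-null set outside of which $X$ is covered by countably many domains $U_i$. Each $U_i$ admits a bi-Lipschitz distance chart $\phi_i\colon U_i\to V_i\subset\R^N$ in the sense of Otsu--Shioya. Their work gives more: at $\Hn$-almost every $x\in U_i$ the chart is differentiable, and its differential is an isometry from $K_xX\cong\R^N$ onto $\R^N$. This holds once we pass to the a.e.\ Riemannian structure, where the metric tensor is defined a.e.\ and is approximately continuous.

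We do the same on the target. If $Y$ is Riemannian, we use smooth charts. If $Y$ is Alexandrov, we use Otsu--Shioya charts $\psi_j$ around $\Hn$-almost every point.

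\textbf{Step 2: Rademacher in coordinates.} On each piece we set $F=\psi_j\circ f\circ\phi_i^{-1}$. This is a Lipschitz map between Euclidean open sets. By Rademacher's theorem it is differentiable Lebesgue-a.e., and $\phi_i$ preserves null sets because it is bi-Lipschitz.

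We then define
\[
\operatorname{md}(f,x)=(d\psi_j)^{-1}\circ dF_{\phi_i(x)}\circ d\phi_i .
\]
Composing the three differentials gives a linear map $K_xX\to K_{f(x)}Y$ between Euclidean spaces. Independence of the choice of charts follows from the chain rule at points where all charts involved are differentiable.

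\textbf{Step 3: agreement with the metric differential (the main obstacle).} We expect this step to carry most of the difficulty. The linear map from Step 2 must actually realise Kirchheim's metric differential, i.e.
\[
d(f(y),f(z))-|\operatorname{md}(f,x)(\log_x y-\log_x z)|=o(d(x,y)+d(x,z)).
\]
This requires the chart metrics to be infinitesimally Euclidean at $x$ in a strong sense, not merely along radial directions.

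We would handle this by blow-up. Rescale $X$ by $\lambda\to\infty$ at a point $x$ that is simultaneously regular, a Lebesgue point of the metric tensor, and a differentiability point of $F$. The rescaled spaces converge in Gromov--Hausdorff sense to $K_xX=\R^N$, and the rescaled maps converge to a limit map. By Step 2 this limit map is the linear map $\operatorname{md}(f,x)$ on $K_xX$. The delicate part is the uniformity of the convergence of distances under the charts. We would get it from the almost-continuity of the Otsu--Shioya metric tensor at Lebesgue points; failing that, we would cite Lytchak's intrinsic argument via directional derivatives and the approximate-linearity of Lipschitz maps on spaces with Euclidean blow-ups.

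The final claim, that $K_xX\cong\R^N$ almost everywhere, is exactly \Cref{thm:AlexStructure}(3).
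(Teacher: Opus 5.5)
The paper gives no proof of this statement; it only cites Lytchak, so your argument has to stand on its own. The domain side is sound, and for a Riemannian target (the case used in the main theorem) the chart reduction goes through.

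\textbf{Main gap: the target side of Step~2.} Both $\psi_j\circ f\circ\phi_i^{-1}$ and the factor $(d\psi_j)^{-1}$ at $f(x)$ require $f(x)$ to be a good point of $Y$: regular, and inside a distance chart whose differential there is a linear isomorphism. Such points have full measure \emph{in $Y$}. You need them for $\mathcal{H}^N$-a.e.\ $x\in X$, that is, you need $\mathcal{H}^N(f^{-1}(B))=0$ for the null bad set $B\subset Y$. Lipschitz maps do not pull null sets back to null sets; Lusin's property only pushes them forward.

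Two examples send a set of positive measure into $S_Y$:
\begin{enumerate}
\item a constant map onto a singular point $y_0$;
\item a Lipschitz map collapsing a tubular neighbourhood of the cone locus $\Sigma$ of a cone-manifold onto $\Sigma$.
\end{enumerate}
On that set your construction is undefined. Worse, the last assertion of the statement is false there: $K_{f(x)}Y$ is $K_{y_0}Y$, respectively $\R^{N-2}\times C(S^1_\alpha)$ with $\alpha<2\pi$, not $\R^N$. For the same reason your closing sentence only handles $K_xX$; \Cref{thm:AlexStructure} says nothing about $K_{f(x)}Y$.

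For singular targets, what is actually true is Lytchak's form: the differential is linear onto a Euclidean subcone of $K_{f(x)}Y$. On $f^{-1}(S_Y)$ this needs an intrinsic argument rather than target charts. Alternatively, use Kirchheim's metric differentiability of $f\circ\phi_i^{-1}$ into an arbitrary metric space. Together with the area formula and $\mathcal{H}^N(S_Y)=0$, it shows the Jacobian vanishes a.e.\ on $f^{-1}(S_Y)$. That suffices for the degree computations, but not for the claimed $\R^N\to\R^N$ linearity.

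\textbf{Step~3.} The tool you name does not close this step. Approximate continuity of the Otsu--Shioya tensor at a Lebesgue point controls $g$ only off a set of small density. It gives no two-point estimate for \emph{all} pairs $y,z$ near $x$.

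What works is the blow-up itself. Take a regular point $x$ where the directions $\uparrow_x^{a_k}$ to the chart's reference points are unique and linearly independent. Along $(\lambda X,x)\to(K_xX,o)$, the rescaled coordinates $\lambda(d_{a_k}-d_{a_k}(x))$ converge uniformly to the linear functionals $-\langle\cdot,\uparrow_x^{a_k}\rangle$. With $L=d_x\phi$, this yields $d(y,z)=|L^{-1}(\phi(y)-\phi(z))|+o(d(x,y)+d(x,z))$. Combine this with Fr\'echet differentiability of $F$ at $\phi(x)$ and the analogous estimate at $f(x)$, and you get the metric differential. But that last ingredient again needs $f(x)$ to be a good point of $Y$, which is exactly where the first gap bites.
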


\subsection{The Metric Jacobian}
Given orientations on $\Xreg$ and $Y_{{\rm reg}}$, we define the signed Jacobian as follows (see \cite{lytchak2005differentiation}) .
\begin{definition}\label{def:metric-Jacobian}
Let $D_x f: \R^N \to \R^N$ be the representation of the metric differential in oriented orthonormal coordinates. The \textit{signed metric Jacobian} is:
\begin{equation}
    \Jac_f(x) = \det(D_x f).
\end{equation}
\end{definition}

Observe that  the $N$-Hausdorff measure under a Lipschitz map $f: X \to Y$ between $N$-Alexandrov spaces satisfies 
 $$\mathcal{H}^N(f(A)) \le (\text{Lip}(f))^N \Hn(A).$$ 
If $\Hn(A) = 0$ it follows that $\Hn(f(A)) = 0$, which known as Lusin's property. 
By \Cref{thm:AlexStructure}, we know $\Hn(\Xsing) = 0$, so by  Lusin's property $\Jac_f$ is defined $\Hn$ almost everywhere on $X$.

\subsection{Kirchheim's Area Formula}

Kirchheim proved the area formula for metric differentials for Lipschitz maps from $\mathbf{R}^N$ into arbitrary metric spaces \cite[Theorem 7]{kirchheim1994}. 
The extension of this area formula to Lipschitz maps between countably rectifiable metric spaces was developed by Ambrosio and Kirchheim \cite{ambrosio2000currents}. 
Complementing this, Lytchak \cite{lytchak2005differentiation} generalized Rademacher's theorem, proving that Lipschitz maps defined on Alexandrov spaces are metrically differentiable almost everywhere. 

In our context, let $X$ and $Y$ be closed, orientable, $N$-dimensional Alexandrov spaces, and let $f \colon X \to Y$ be a Lipschitz map. 
By the structure theory detailed in \Cref{sec:N-rect-Alex}, $X$ and $Y$ are countably $\Hn$-rectifiable metric spaces. 
While the general metric area formula relies strictly on an unsigned Jacobian, the regular sets of $X$ and $Y$ have full $\Hn$-measure and have Euclidean tangent cones. 
By Lytchak's theorem, the geometric differential $D_x f$ is a linear map between Euclidean spaces almost everywhere. 
Because $X$ and $Y$ are orientable, choosing compatible orientations for these tangent cones gives a well-defined signed geometric Jacobian, $\Jac_f(x)$. 

Since $f$ maps the $\Hn$-null singular set of $X$ to an $\Hn$-null set in $Y$, the classical signed area formula applies over the regular sets. 
Let $g \colon Y \to \mathbf{R}$ be an arbitrary bounded Borel measurable (test) function. 
The area formula for the signed metric Jacobian then takes the following form:
\begin{equation}\label{eq:Kirchheim-Area}
\int_X  \Jac_f(x) g(f(x)) d\Hn(x) = \int_Y g(y) \left( \sum_{x \in f^{-1}(y)} \text{sgn}(\Jac_f(x)) \right) d\Hn(y).
\end{equation}

This formula suggests that the integral of the Jacobian measures the algebraic count of preimages, which is exactly the degree.

Lytchak \cite[Section 8]{lytchak2005differentiation} observed that whenever the metric differential $D_x f \colon K_x X \to K_{f(x)} Y$ exists, Kirchheim's differential is given explicitly by the pullback of the target metric,
\[
    \operatorname{md}(f, x)(v) = |D_x f(v)|.
\]
As a consequence, because $D_x f$ is a linear map between Euclidean spaces almost everywhere, the definition of the Jacobian used in \Cref{eq:Kirchheim-Area} by Kirchheim and that in Definition \ref{def:metric-Jacobian} coincide in this case, precisely because the reference measure is the Hausdorff measure.

In our arguments below it will be convenient to use the next equivalent formulation of the Jacobian of a map.

\begin{lemma}[Lemma 2.8 \cite{RCDbary}]\label{lem:Jacobian}
Let $f$ be a Lipschitz map $f:X\to Y$ between Alexandrov spaces of dimension $N$. 
Then,  for $\mc{H}^{N}$-a.e. $x$ in $X$, we have
\begin{align}\label{eq:Jac_equiv}
 |\Jac_f (x)|= \limsup_{r\to 0}\frac{ \mc{H}^{N}(f(B(x,r)))}{\mc{H}^{N}(B(x,r))}.
\end{align}
\end{lemma}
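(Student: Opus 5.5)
We will prove the identity \eqref{eq:Jac_equiv} by working at $\Hn$-almost every point where three properties hold at once. First, $x\in\Xreg$ and $f(x)\in Y_{\rm reg}$; the latter can be arranged off a null set after also treating the set where $\Jac_f=0$. Second, $f$ is differentiable at $x$ in the sense of \Cref{thm:lytchak-rademacher}, with $D_xf:\R^N\to\R^N$ linear. Third, $x$ is a density point for the Euclidean structure, meaning $\Hn(B(x,r))/(\omega_N r^N)\to 1$. This density holds everywhere on $\Xreg$, since the rescalings $(X,r^{-1}d,x)$ converge to $K_xX\cong\R^N$ and $\Hn$ is continuous under non-collapsed Gromov--Hausdorff convergence of Alexandrov spaces. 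By the definition of differentiability, the rescaled maps $f_r:(X,r^{-1}d,x)\to(Y,r^{-1}d,f(x))$ converge uniformly on the unit ball to $D_xf:B(0,1)\to\R^N$. The task is therefore to show that $r^{-N}\Hn(f(B(x,r)))\to \Hn(D_xf(B(0,1)))=|\det D_xf|\,\omega_N$.

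\textbf{Upper bound.} Uniform convergence gives $f_r(B_1)\subset D_xf(B(0,1))$ thickened by an amount $\epsilon(r)\to0$, after identifying the rescaled $Y$-ball with a Euclidean ball via a Gromov--Hausdorff approximation. To turn this into a measure estimate, I would use upper semicontinuity of $\Hn$ on compact sets under noncollapsed Gromov--Hausdorff convergence (Burago--Gromov--Perelman volume continuity). Applied to the neighborhoods, it yields $\limsup r^{-N}\Hn(f(B(x,r)))\le \Hn(N_\epsilon(D_xf(B_1)))\to|\det D_xf|\omega_N$. When $D_xf$ is singular, the image is a degenerate ellipsoid and the same argument gives limit $0$, which matches $|\Jac_f(x)|=0$.

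\textbf{Lower bound.} This is the main obstacle, because a convergent limit of images need not have measure as large as the limit's image. I would handle it with a degree argument. Assume $\det D_xf\neq0$, and take a point $q$ in the interior of $D_xf(B_1)$ at distance at least $\delta$ from $D_xf(\partial B_1)$. For small $r$, $f_r|_{\partial B_1}$ stays within $\epsilon(r)<\delta$ of $D_xf|_{\partial B_1}$, so it misses the corresponding point. The local degree of $f_r$ onto that point then equals $\pm1$, computed through the local homology $H_N(\cdot,\cdot\setminus\{pt\})\cong\Z$ at regular points together with a straight-line homotopy in a bi-Lipschitz chart of Otsu--Shioya. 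It follows that $f(B(x,r))$ contains the preimage under the Gromov--Hausdorff approximation of almost all of $D_xf(B_1)$. Volume continuity again gives $\liminf r^{-N}\Hn(f(B(x,r)))\ge|\det D_xf|\omega_N$.

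An alternative route to the lower bound avoids topology. It uses the area formula \eqref{eq:Kirchheim-Area} with $g=\mathbf 1_{f(B(x,r))}$, together with injectivity of $f$ near $x$ up to small measure, which follows from bi-Lipschitz behavior of $f$ on large-density sets (Kirchheim's decomposition). I would keep this as a fallback. Dividing both bounds by $\Hn(B(x,r))\sim\omega_N r^N$ gives \eqref{eq:Jac_equiv}, in fact with a genuine limit.
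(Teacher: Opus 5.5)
The paper does not prove this lemma. It imports it from \cite{RCDbary} and frames it through the Kirchheim/Ambrosio--Kirchheim area formula \eqref{eq:Kirchheim-Area}, together with the identification of Kirchheim's and Lytchak's differentials for $\mathcal{H}^N$. In that framework the natural proof is your fallback route.

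Write $Jf$ for Kirchheim's metric Jacobian, which equals $|\Jac_f|$ wherever the latter is defined. Then:
\begin{itemize}
\item The area formula gives $\mathcal{H}^N(f(B(x,r)))\le\int_{B(x,r)}Jf\,d\mathcal{H}^N$. Lebesgue differentiation for the (Bishop--Gromov) doubling measure then gives the upper bound at Lebesgue points.
\item For the lower bound, split $\{Jf>0\}$ into Borel pieces $E_i$ on which $f$ is injective (Kirchheim's decomposition). This gives $\mathcal{H}^N(f(B(x,r)))\ge\int_{B(x,r)\cap E_i}Jf\,d\mathcal{H}^N$ at density points of $E_i$.
\end{itemize}

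Your main route is genuinely different. It blows up at points where $x$ and $f(x)$ are regular and $f$ is Lytchak-differentiable, using Burago--Gromov--Perelman volume convergence for the upper bound and a local degree for the lower bound. What it buys is an honest limit at \emph{every} such point, with no covering or decomposition argument. What it costs is a topological step, and it needs the target tangent cone to be Euclidean.

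That last requirement is where the proposal has a genuine gap. You cannot arrange $f(x)\in Y_{\rm reg}$ off a null set. The set $f^{-1}(S_Y)$ can have positive measure, for instance when $f$ is constant with singular value, or whenever $f$ maps an open set into $S_Y$. Lusin's property controls images of null sets, not preimages. On this set $K_{f(x)}Y\not\cong\R^N$, so your blow-up only yields $\limsup\le\mathcal{H}^N(D_xf(\bar{B}_1))/\omega_N$, computed inside the cone. You give no reason this vanishes, and density cannot supply one: the $\mathcal{H}^N$-null sets $f\bigl(f^{-1}(S_Y)\cap B(x,r)\bigr)\subset S_Y$ can rescale to a limit of positive measure.

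The missing input is measure-theoretic. The area formula gives $\int_{f^{-1}(S_Y)}Jf\,d\mathcal{H}^N=\int_{S_Y}\#\bigl(f^{-1}(y)\bigr)\,d\mathcal{H}^N(y)=0$, so $Jf=0$ a.e.\ there. The Lebesgue-differentiation upper bound above then gives ratio $\to0$. So your route needs the fallback's upper bound on this set anyway.

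A smaller point concerns the degree step. It cannot pass through Gromov--Hausdorff approximations, which are not continuous, so it must be run entirely in distance-coordinate charts $\varphi$ at $x$ and $\psi$ at $f(x)$. These are bi-Lipschitz homeomorphisms onto open subsets of $\R^N$. Choose the strainers so that the charts are differentiable at $x$ and $f(x)$ with invertible linear differential, for example by taking strainer points on shortest paths issuing from those points. Then $\psi\circ f\circ\varphi^{-1}$ is classically differentiable at $\varphi(x)$ with invertible derivative, and your straight-line homotopy becomes the Euclidean Brouwer-degree lemma. Your volume-convergence step, or charts with bi-Lipschitz constant near $1$, then transfers the resulting Lebesgue bound back to $\mathcal{H}^N$.
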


We will now focus on a crucial technical point about why the various metric differentials we use are compatible. 
The analytical degree (defined via integral currents) relies on Kirchheim's tangential differential \cite{kirchheim1994, ambrosio2000currents}, whereas the upper bounds from the barycenter method uses Lytchak's metric differential \cite{lytchak2005differentiation} and the volume-ratio Jacobian.
 For arbitrary metric measure spaces, Lytchak \cite{lytchak2005differentiation} observed that the Jacobians derived from these differentials may not coincide. 
 However, in the setting of Alexandrov spaces equipped with the Hausdorff measure, they do agree.

By the structure theory of Alexandrov spaces \cite{otsu1994riemannian}, an $N$-dimensional Alexandrov space $X$ is countably $\mathcal{H}^N$-rectifiable. 
Consequently, its regular set $X_{{\rm reg}}$ has full $\mathcal{H}^N$-measure, and at $\mathcal{H}^N$-almost every point $x$ in $X_{{\rm reg}}$, the geometric Gromov--Hausdorff tangent cone $K_x X$ is isometric to $\mathbf{R}^N$. 
Observe that, because we have local bi-Lipschitz charts the approximate tangent space in Kirchheim's theory exists, and it is isometric to $\mathbf{R}^N$.
Moreover, the $N$-dimensional Hausdorff measure $\mathcal{H}^N$ on $X$ coincides locally at regular points with the Lebesgue measure on these tangent spaces (as there the density of $\mathcal{H}^N$ equals $1$).
For a Lipschitz map $f: X \to Y$ between $N$-dimensional Alexandrov spaces, Lytchak's metric differential $md_x f: K_x X \to K_{f(x)} Y$ is a well-defined linear map between Euclidean spaces for $\mathcal{H}^N$-almost every $x$ in $X$. 
Because the underlying reference measure for both the domain and target is the Hausdorff measure $\mathcal{H}^N$, both Kirchheim's tangential differential $d_x f$ and Lytchak's metric differential $md_x f$ represent the unique first-order linear approximation of $f$ almost everywhere \cite{lytchak2005differentiation}. 
Thus, they compute the identical local volume distortion mapped between these Euclidean tangent spaces. Therefore, in this case the absolute value of the determinant of Kirchheim's differential, $|\det d_x f|$, exactly coincides with Lytchak's metric Jacobian. 
This guarantees that the metric Jacobian evaluated in the Ambrosio--Kirchheim area formula and the volume-ratio Jacobian from \Cref{eq:Jac_equiv} bounded in the barycenter method are identical $\mathcal{H}^N$-almost everywhere.

\section{The Equivalence of Degrees}

In this section we will prove the equivalence between the topological and analytical definitions of the degree of a map between compact orientable Alexandrov spaces. 
We begin by stating their definitions.

In the definition of topological degree we only need to assume that $X$ and $Y$ are topological spaces that admit fundamental classes $[X]$ and $[Y]$, in the sense that these span their integer singular top dimensional homology groups. 
As mentioned above, this is known for orientable $N$-Alexandrov spaces.
\begin{definition}(Topological degree)\label{def:topdeg}
Let $f: X \to Y$ be a continuous map between orientable $N$-Alexandrov spaces.
The topological degree $\deg_{top}(f)$ is the unique integer $k$ such that $f_*([X]) = k [Y]$ in singular homology.
\end{definition}

When defining the analytical degree, we need to assume that the spaces $X$ and $Y$ have an $N$-Hausdorff measure, and that they enjoy a rich enough metric structure allowing for Jacobians of Lipschitz maps to be defined. 
These important facts are known for Alexandrov spaces \cite{lytchak2005differentiation}.

\begin{definition}(Analytical degree)\label{def:an-deg}
Let $f: X \to Y$ be a Lipschitz map between orientable $N$-Alexandrov spaces. 
The analytical degree $\deg_{an}(f)$ is defined as:
\begin{equation}
    \deg_{an}(f) := \frac{1}{\Hn (Y)} \int_X \Jac_f(x) \, d\Hn(x).
\end{equation}
\end{definition}

\begin{theorem}\label{thm:an-deg-equals-top-deg}
Let $X$ and $Y$ be closed, orientable, boundaryless Alexandrov spaces of dimension $N$, and $f: X \to Y$ a Lipschitz map. Then:
\begin{equation}
    \deg_{an}(f) = \deg_{top}(f).
\end{equation}
\end{theorem}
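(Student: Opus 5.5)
The plan is to compare the push-forward of the fundamental current $T_X$ of \Cref{thm:AlexIntCurrent} with the fundamental current $T_Y$ of $Y$, and to evaluate both sides on a suitable metric form.

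\textbf{Topological side.} Push-forward of integral currents under the Lipschitz map $f$ commutes with $\partial$. It is also compatible with the chain map $[\cdot]\colon S^{\operatorname{Lip}}_\bullet\to I^c_\bullet$, because $(f\circ\sigma)_\#\llbracket\Delta^N\rrbracket=f_\#\sigma_\#\llbracket\Delta^N\rrbracket$. Hence the isomorphisms $\Psi$ for $X$ and $Y$ intertwine $f_\#$ on current homology with $f_*$ on singular homology. By \Cref{lem:fundamental-class-current}, $\Psi[T_X]=[X]$ and $\Psi[T_Y]=[Y]$. Therefore $f_*[X]=\deg_{top}(f)[Y]$ translates into
$$[f_\#T_X]=\deg_{top}(f)[T_Y]\quad\text{in } H^{\cur}_N(Y).$$
Since $I_{N+1}(Y)=0$, there are no nontrivial boundaries in degree $N$. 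So this is an equality of currents,
$$f_\#T_X=\deg_{top}(f)\,T_Y.$$

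\textbf{Analytic side.} Test this identity on tuples $(g,\pi_1,\dots,\pi_N)$, where $g$ is Lipschitz on $Y$ and $\pi=(\pi_1,\dots,\pi_N)$ is a bi-Lipschitz chart defined on a piece $U\subset Y_{\rm reg}$, extended as a Lipschitz map to all of $Y$, with $\operatorname{supp} g\subset U$.

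Using the explicit formula for $T_X$ together with the chain rule $d_x(\pi\circ f)=d_{f(x)}\pi\circ D_xf$, valid a.e. by \Cref{thm:lytchak-rademacher}, we obtain
$$f_\#T_X(g,\pi)=\int_X g(f(x))\,\det(d_{f(x)}\pi)\,\Jac_f(x)\,d\Hn(x).$$
The signed area formula \eqref{eq:Kirchheim-Area} then rewrites this as
$$\int_Y g(y)\det(d_y\pi)\,N_f(y)\,d\Hn(y),\qquad N_f(y)=\sum_{x\in f^{-1}(y)}\operatorname{sgn}\Jac_f(x).$$
Meanwhile,
$$\deg_{top}(f)\,T_Y(g,\pi)=\deg_{top}(f)\int_Y g\det(d\pi)\,d\Hn.$$
Since $\det d\pi\neq0$ a.e. on a chart and $g$ is arbitrary, we get $N_f=\deg_{top}(f)$ for $\Hn$-a.e. $y$ in each chart. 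By countable rectifiability (\S\ref{sec:N-rect-Alex}), this holds a.e. on $Y$. Applying \eqref{eq:Kirchheim-Area} once more with $g\equiv1$ gives
$$\int_X\Jac_f\,d\Hn=\int_Y N_f\,d\Hn=\deg_{top}(f)\,\Hn(Y),$$
which is the claim.

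\textbf{Main obstacle.} The delicate step is the chain rule and compatibility of Jacobians. We need the signed determinant appearing in the Ambrosio--Kirchheim push-forward, computed via Kirchheim's tangential differential, to agree with the Lytchak Jacobian $\Jac_f$, including its sign. Concretely, the orientations of the tangent cones must be chosen consistently with the orientations carried by $T_X$ and $T_Y$.

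I would handle this by working on the bi-Lipschitz charts of $X_{\rm reg}$ and $Y_{\rm reg}$. There $f$ becomes a Lipschitz map between subsets of $\mathbf{R}^N$, and the metric differential identification discussed at the end of \S4 applies: both differentials are the same linear map a.e., since the reference measures are Hausdorff. The classical Euclidean chain rule and signed area formula then apply, and the sign is fixed by the weight-one orientation of \Cref{thm:AlexIntCurrent}.

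A secondary point is that preimages of the $\Hn$-null singular set may have positive measure. On that set, however, $\Jac_f=0$ a.e. by the area formula, so it contributes to neither side.
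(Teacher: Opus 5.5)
Your proposal is correct and is essentially the paper's argument: both obtain $f_\# T_X=\deg_{top}(f)\,T_Y$ from the naturality of Mitsuishi's isomorphism $\Psi$, \Cref{lem:fundamental-class-current}, and $I_{N+1}(Y)=0$, then identify $\sum_{x\in f^{-1}(y)}\operatorname{sgn}\Jac_f(x)=\deg_{top}(f)$ for $\Hn$-a.e.\ $y$ and integrate via \eqref{eq:Kirchheim-Area} with $g\equiv 1$. Your evaluation on chart forms $(g,\pi)$ makes explicit the ``equate the multiplicities'' step that the paper only asserts; note that the chain rule there also needs your null-preimage argument, applied to the set where $\pi$ is not differentiable as well as to $S_Y$.
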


\begin{proof}
The proof strategy is to relate both degrees to the integer multiplicity of the push-forward of the fundamental current.
First we will relate the current push-forward to the topological degree.

Let $T_X$ and $T_Y$ be the fundamental integral currents of $X$ and $Y$, as detailed in \cref{thm:AlexIntCurrent}. 
Denote by 
$\Psi: H_N^{\cur}(X) \to H_N(X; \Z)$
Mitsuishi's natural isomorphism between current homology and singular homology, as explained in Lemma \ref{lem:fundamental-class-current} (further, see \cite[Theorem 1.3]{mitsuishi2019homologies}). 
By \cite[Theorem 2.37]{jaramillo2021alexandrov}, the group of integral $N$-cycles is isomorphic to $H_N(X; \Z) \cong \Z$, and 
by \cref{thm:jprss-4.6}, the current $T_X$ generates this group. 
Choosing the compatible orientation, by Lemma \ref{lem:fundamental-class-current}  $\Psi(T_X)$ equals the fundamental class $[X]$.

We are assuming $X$ is compact and $f$ is Lipschitz. 
By the properties of metric currents \cite[Definition 2.27]{jaramillo2021alexandrov}, the push-forward $f_\# T_X$ is a well-defined integral current, and the boundary operator commutes with push-forwards.
From which  $\partial (f_\# T_X) = f_\# (\partial T_X) = 0$ follows. 
As $Y$ is $N$-dimensional, the group of integral $(N+1)$-currents is trivial, meaning every $N$-cycle uniquely represents its homology class. Which implies that $f_\# T_X$ is a well-defined cycle in $H_{N}^{\cur}(Y) \cong \Z$. 
Thus, for some integer $k$, we have the following equality of currents,
\begin{equation}\label{eqn:deg-curr-k}
    f_\# T_X = k T_Y.
\end{equation}

Observe that the isomorphism $\Psi$ is a natural transformation of functors \cite[Theorem 1.3]{mitsuishi2019homologies}, and therefore $\Psi$ commutes with the push-forwards induced by $f$.
 Applying $\Psi$ to \eqref{eqn:deg-curr-k} we obtain:
\[ f_*(\Psi(T_X)) = \Psi(k T_Y) \]
Which implies:
\[
 f_*([X]) = k [Y] 
 \]
 
By the definition of topological degree, $f_*([X]) = \deg_{top}(f)[Y]$, and therefore,
\begin{equation}\label{eq:k-deg}
 k = \deg_{top}(f). 
\end{equation}

Now we will see that $k$ also equals the analytical degree.
Because $f_\# T_X = k T_Y$, and the weight of $T_Y$ is exactly $1$ for $\Hn$-a.e. $y \in Y$ (by \Cref{thm:AlexIntCurrent}), thanks to Lytchak's version of Rademacher's theorem for Alexandrov spaces, we may equate the multiplicities to find:
$$\sum_{x \in f^{-1}(y)} \text{sgn}(\text{Jac}_f(x)) = k \quad \text{for } \Hn\text{-a.e. } y \in Y$$

Applying Kirchheim's Area Formula in \Cref{eq:Kirchheim-Area} with the constant function $g(y) \equiv 1$, we obtain:
\begin{eqnarray*}
\int_X \text{Jac}_f(x) d\Hn(x) 	& = & \int_Y 1 \cdot \left( \sum_{x \in f^{-1}(y)} \text{sgn}(\text{Jac}_f(x)) \right) d\Hn(y) \\
								& = & \int_Y k \, d\Hn(y) \\
								& = & k \Hn(Y)
\end{eqnarray*}
Dividing by $\Hn(Y)$ and uniting with \Cref{eq:k-deg} we conclude the proof, $\deg_{an}(f) = k = \deg_{top}(f)$.
\end{proof}

It is natural to wonder how far \Cref{thm:an-deg-equals-top-deg} may be generalized beyond Alexandrov spaces. To answer this tempting inquisition, we present the following result, which extends the degree equivalence to a broader class of metric measure spaces. 
We hope avid followers of developments in metric measure spaces will find it interesting.

In passing, we briefly note that a metric space is considered \emph{geometric in the sense of Lytchak} if it admits a robust notion of independent Euclidean tangent cones almost everywhere, that allow for a rigorous definition of metric differentials and Jacobians \cite{lytchak2005differentiation}.

\begin{definition}\label{def:s-spaces}
Let $\mathfrak{s}$ be the collection of compact metric measure spaces $(X, d, m)$ of Hausdorff dimension $N$ satisfying the following axioms:
\begin{enumerate}[label=(\roman*)]
    \item The measure $m$ coincides with the $N$-dimensional Hausdorff measure $\mathcal{H}^N$, and the space $(X,d)$ is countably $\mathcal{H}^N$-rectifiable and geometric in the sense of Lytchak.
    \item $X$ is a closed orientable pseudomanifold, with a fundamental class $[X]$ spanning its top singular homology $H_N(X; \mathbf{Z}) \simeq \mathbf{Z}$.
    \item  $X$ is an integral current space admitting a closed fundamental current $T_X$ of weight $1$ almost everywhere, spanning its top integral current homology $H_N^{IC}(X; \mathbf{Z}) \simeq \mathbf{Z}$.
    \item  $X$ is locally Lipschitz contractible, and the natural chain map induces an isomorphism $\Psi \colon H_N^{IC}(X; \mathbf{Z}) \to H_N(X; \mathbf{Z})$ such that $\Psi([T_X]) = [X]$.
\end{enumerate}
\end{definition}

\begin{corollary}\label{cor:extended-metric-Deg-Thm}
Let $X, Y$ be spaces in $\mathfrak{s}$, and $f \colon X \to Y$ a Lipschitz map, then $$ \deg_{an}(f) = \deg_{top}(f).$$
\end{corollary}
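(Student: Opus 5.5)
The plan is to rerun the proof of \Cref{thm:an-deg-equals-top-deg} verbatim, replacing each Alexandrov-specific input by the axiom of \Cref{def:s-spaces} that abstracts it. The common pivot is again the integer $k$ with $f_\# T_X = k\, T_Y$: first I identify $k$ with $\deg_{top}(f)$ via homology, then with $\deg_{an}(f)$ via the area formula.

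\textbf{Step 1 (the integer $k$).} By axiom (iii), $T_X$ is a closed integral $N$-current, so $\partial(f_\# T_X)=f_\#\partial T_X=0$ and $f_\# T_X$ is an integral $N$-cycle on $Y$. For $k$ to be an honest equality of currents, I also need that $Y$ carries no nonzero integral $(N+1)$-currents, so that $N$-cycles coincide with their current homology classes. This holds because $\mathcal{H}^{N+1}(Y)=0$, as $Y$ has Hausdorff dimension $N$. Since $[T_Y]$ generates $H_N^{IC}(Y)\cong\Z$, we get $f_\# T_X = k T_Y$ for a unique $k\in\Z$.

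\textbf{Step 2 ($k=\deg_{top}(f)$).} The map $\Psi$ of axiom (iv) comes from the natural chain map $[\cdot]$ from Lipschitz singular chains to integral currents. This chain map commutes with push-forward by a Lipschitz $f$, and the inclusion of Lipschitz into singular chains is natural too. So $\Psi$ intertwines $f_\#$ and $f_*$, giving $f_*[X]=f_*\Psi[T_X]=\Psi[f_\#T_X]=k\Psi[T_Y]=k[Y]$. By axiom (ii) and \Cref{def:topdeg}, $k=\deg_{top}(f)$.

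\textbf{Step 3 ($k=\deg_{an}(f)$).} By axiom (i), both spaces are countably $\mathcal H^N$-rectifiable with $m=\mathcal H^N$ and Lytchak-geometric. Hence $f$ is differentiable $\mathcal H^N$-a.e. with a linear differential between Euclidean tangents. Kirchheim's differential coincides with Lytchak's, as discussed before \Cref{lem:Jacobian}, so the signed Jacobian is well defined using the orientations carried by $T_X$ and $T_Y$. The Ambrosio--Kirchheim push-forward formula for a rectifiable current of weight $1$ then writes $f_\#T_X$ as the rectifiable current on $Y$ with multiplicity $\sum_{x\in f^{-1}(y)}\mathrm{sgn}\,\Jac_f(x)$, where the Lusin property discards null sets. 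Comparing with $kT_Y$, whose weight is $1$ a.e., gives multiplicity $k$ for a.e. $y$. Integrating \eqref{eq:Kirchheim-Area} with $g\equiv 1$ yields $\int_X\Jac_f\,d\mathcal H^N = k\,\mathcal H^N(Y)$, i.e.\ $\deg_{an}(f)=k$.

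The main obstacle is Step 3. There I must justify the signed area formula and the identification of the multiplicity of $f_\#T_X$ with the signed preimage count purely from the axioms. In particular, the orientation of the approximate tangent spaces encoded by $T_X$ and $T_Y$ has to be matched with the orientation used to sign $\Jac_f$. I would handle this chartwise: cover $X$ and $Y$ a.e. by bi-Lipschitz charts from the rectifiable parametrizations of $T_X$ and $T_Y$, reduce to Lipschitz maps between subsets of $\mathbf R^N$, and apply the classical Euclidean signed area formula together with $\varphi_\#\llbracket\theta\rrbracket$ bookkeeping.
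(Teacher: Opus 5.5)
Your proposal is correct and follows essentially the paper's argument. The paper proves this corollary only by remarking that axioms (i)--(iv) supply the inputs used in the proof of \Cref{thm:an-deg-equals-top-deg}, and you rerun that proof with the same pivot $f_\# T_X = k\,T_Y$, matching $k$ with $\deg_{top}(f)$ via naturality of $\Psi$ and with $\deg_{an}(f)$ via the signed area formula; your chartwise treatment of Step 3 spells out the multiplicity identification that the paper only asserts.
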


\begin{proof}
The spaces in $\mathfrak{s}$ satisfy the compactness, dimension, current weight, and homological isomorphism constraints used in the Alexandrov setting, so the equality follows by suitably adapting the proof of \Cref{thm:an-deg-equals-top-deg}.
\end{proof}

\section{Barycenter method background}

\subsection{The Barycenter and Natural Maps}\label{subsec:barycenter-maps}

We will now review the construction of the barycenter and the natural maps $F_s$, and some of their fundamental properties. 
Besson, Courtois, and Gallot \cite{BCG-Samb} intially developed these tools for smooth Riemannian manifolds.
 Recently, we extended the barycenter method to a class of metric measure spaces satisfying synthetic lower Ricci curvature bounds \cite{RCDbary}, that includes Alexandrov spaces with curvature bounded below \cite{petrunin2011, zhang2010ricci}.
  
  As explained in \Cref{sec:Alex-universal}, universal coverings of Alexandrov spaces exist, and have the same lower curvature bound.
  
  Let $X$ and $Y$ be as in the statement of \Cref{thm:Alex-entropy-degree}.
  Let $\til{X}$ and $\til{Y}$ be the universal covers of $X$ and $Y$, in that order, and  define
\[
\bar{X}=\til{X}/\ker f_*  \qquad \text{and} \qquad \Ga=\pi_1(X)/\ker f_* .
\]

Consider  the corresponding lift $\bar{f}: \bar{X}\to \til{Y}$ of $f$ with image $\til{Y}$.
Observe that the measure $m$ on $X$ lifts to a $\pi_1(X)$-invariant measure $\til{m}$ on $\til{X}$, and a $\Ga$-invariant measure $\bar{m}$ on $\bar{X}$.
Denote by $\pi:\bar{X}\to X$ the covering map.
On a fundamental domain $D\subset \bar{X}$ the measure is uniquely specified by  
$${\bar{m}}(A)=\sum_{\ga\in \Ga}m(\pi(A\cap \ga D)).$$
This lift is canonical, so we do not need to specify a basepoint.

Write $d(\cdot,\cdot)$ for the distance on $\bar{X}$.
Let $s>h(\bar{X})$, $x$ in $ \bar{X}$, and consider the finite measure $\mu_x^s$ supported on $\bar{X}$ that is absolutely continuous with respect to the measure $\bar{m}$ with Radon--Nikodym derivative
\begin{equation}\label{eq:muxs}
\frac{d\mu_x^s}{d{\bar{m}}}(z)=e^{-s d(x,z)}.
\end{equation}

The measure $\mu_x^s$ has finite total mass because $s>h(\bar{X})$.

\subsection{Barycenters}\label{sec:barycenters}
Denote by $\mc{P}(Y)$ the space of probability measures on a complete metric space $Y$. 
Let $\mc{P}_0(Y)$ represent the subspace of $\mc{P}(Y)$ of the form $\sum_{i=1}^k a_i \delta_{x_i}$, which are finite convex combinations of Dirac masses. 
The subset of measures having bounded support is denoted by $\mc{P}^\infty(Y)$. 
For any parameter $p \in [1, \infty)$, we write $\mc{P}^p(Y)$ for the collection of probability measures $\mu$ fulfilling the integrability requirement $d(y, \cdot) \in L^p(\mu)$ for at least one (and consequently every) reference point $y \in Y$. 
For any $\infty \ge p > q \ge 1$, we have the following nested sequence of inclusions:
\[
    \mc{P}_0(Y) \subset \mc{P}^p(Y) \subset \mc{P}^q(Y) \subset \mc{P}(Y).
\]

 For $p \in [1, \infty]$, we endow $\mc{P}^p(Y)$ with the standard $L^p$ Wasserstein metric.

Now, consider a complete $\CAT(0)$ space $Z$, and select an arbitrary basepoint $o \in Z$. 
Given a measure $\nu \in \mc{P}^1(Z)$, we introduce the functional $\mc{B}_{\nu} \colon Z \to \R$, defined by the integral
\begin{equation} \label{Busemann like function}
    \mc{B}_{\nu}(z) = \int_{Z} \left( d(y,z)^2 - d(o,y)^2 \right) d\nu(y).
\end{equation}

Observe that this formulation computes the $d^2$-barycenter used by Sambusetti \cite{Sambusetti-1999} and by Sturm \cite{Sturm03}. Recognizing this specific construction will play a central role in several subsequent arguments.

\begin{lemma}[Proposition 4.3 of \cite{Sturm03}]\label{lem:bary_unique}
Let $(Z, d)$ be a complete $\CAT(0)$ space and fix $o \in Z$. For each $\nu\in \mathcal{P}^1(Z)$ there exists a unique point $z \in Z$ which minimizes the uniformly convex, continuous function $\mc{B}_\nu$. This point is independent of the basepoint $o$; it is called the barycenter (or, more precisely, $d^2$-barycenter) of $\nu$ and denoted by $\bary(\nu)$.
Moreover, for $\nu\in \mc{P}^2(Z)$, the following base-point free formulation holds:
\[
\bary(\nu)=\argmin_z \int_Z d(y,z)^2 d\nu(y).
\]
\end{lemma}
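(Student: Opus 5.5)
We prove \Cref{lem:bary_unique} in four steps: finiteness of $\mc{B}_\nu$, a strong convexity estimate, existence and uniqueness of the minimizer, and independence of the basepoint.

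\textbf{Finiteness and continuity.} For $\nu\in\mc{P}^1(Z)$ the triangle inequality gives
\[
|d(y,z)^2-d(o,y)^2| = |d(y,z)-d(o,y)|\,(d(y,z)+d(o,y)) \le d(o,z)\,(d(o,z)+2d(o,y)).
\]
The right-hand side is $\nu$-integrable, so $\mc{B}_\nu$ is finite everywhere. The same estimate, applied with two points $z,z'$, shows that $\mc{B}_\nu$ is locally Lipschitz, hence continuous.

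\textbf{Strong convexity.} In a $\CAT(0)$ space, for any geodesic $\gamma\colon[0,1]\to Z$ and any point $y$, the comparison inequality gives
\[
d(y,\gamma_t)^2\le (1-t)d(y,\gamma_0)^2+t\,d(y,\gamma_1)^2-t(1-t)d(\gamma_0,\gamma_1)^2 .
\]
Subtract $d(o,y)^2$ from both sides and integrate against $\nu$; the subtracted term is affine in $t$, so it passes through unchanged. This yields
\[
\mc{B}_\nu(\gamma_t)\le(1-t)\mc{B}_\nu(\gamma_0)+t\mc{B}_\nu(\gamma_1)-t(1-t)d(\gamma_0,\gamma_1)^2 ,
\]
so $\mc{B}_\nu$ is uniformly convex.

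\textbf{Existence and uniqueness.} First, $\mc{B}_\nu$ is bounded below. From the finiteness estimate, $\mc{B}_\nu(z)\ge d(o,z)^2-2d(o,z)\int d(o,y)\,d\nu$, which is bounded below and tends to $+\infty$ as $d(o,z)\to\infty$; in particular $\mc{B}_\nu$ is coercive.

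Let $I=\inf\mc{B}_\nu$ and take a minimizing sequence $z_n$. Apply the convexity inequality at $t=1/2$ to the midpoint $m_{nk}$ of $z_n,z_k$:
\[
\tfrac14 d(z_n,z_k)^2\le \tfrac12\mc{B}_\nu(z_n)+\tfrac12\mc{B}_\nu(z_k)-\mc{B}_\nu(m_{nk})\le \tfrac12\mc{B}_\nu(z_n)+\tfrac12\mc{B}_\nu(z_k)-I\to0 .
\]
So $(z_n)$ is Cauchy. Since $Z$ is complete it converges, and by continuity its limit is a minimizer. If $z,z'$ are both minimizers, the same midpoint inequality gives $\tfrac14d(z,z')^2\le 0$, so $z=z'$.

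\textbf{Basepoint independence.} Changing $o$ to $o'$ alters $\mc{B}_\nu$ by $\int (d(o,y)^2-d(o',y)^2)\,d\nu$. This is a constant independent of $z$, and it is finite by the same triangle-inequality bound with $z$ replaced by $o'$. Adding a constant does not move the argmin. If moreover $\nu\in\mc{P}^2(Z)$, then $\int d(o,y)^2\,d\nu<\infty$, so it can be added to $\mc{B}_\nu$. The result is $\int d(y,z)^2\,d\nu$, which therefore has the same minimizer; this is the base-point free formulation.

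The only real subtlety is the $\mc{P}^1$ integrability, which is why the normalizing term $d(o,y)^2$ is subtracted in the definition of $\mc{B}_\nu$. Once the estimate in the first step is in place, the rest is the standard uniform-convexity argument.
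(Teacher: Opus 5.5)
Your argument is correct and is essentially the standard proof behind Sturm's Proposition 4.3, which the paper cites without proof: uniform convexity of $\mc{B}_\nu$ inherited from the $\CAT(0)$ inequality for $d(y,\cdot)^2$, a lower bound from $\nu\in\mathcal{P}^1(Z)$, the midpoint argument showing a minimizing sequence is Cauchy, and the observation that changing $o$ (or, for $\nu\in\mathcal{P}^2(Z)$, adding $\int d(o,y)^2\,d\nu$) shifts $\mc{B}_\nu$ only by a finite constant. One small fix: the uniform lower bound $\mc{B}_\nu(z)\ge d(o,z)^2-2d(o,z)\int d(o,y)\,d\nu(y)$ comes from the reverse triangle inequality $d(y,z)\ge |d(o,z)-d(o,y)|$, not from the two-sided estimate of your first step, which only gives a lower bound tending to $-\infty$.
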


\begin{definition}
Set $\sigma_{x}^{s}=\bar{f}_*\mu_x^s$, and  define the map $\bar{F_s}:\bar{X}\to \til{Y}$ as
\begin{equation}\label{eq:Jac-ent-bd}
\bar{F_s}(x)=\bary({\sigma}_{x}^{s}).
\end{equation}
\end{definition}

The measure construction and the map $x \mapsto \bar{F}_s(x)$ are $\pi_1(X)$-equivariant \cite[Lemma 4.3]{RCDbary}, so $\bar{F}_s$ descends to a well-defined, continuous map $F_s: X \to Y$ on the compact quotients.
The topological and metric regularity of the maps $F_s$ is highly controlled. 
The next lemma is needed for our subsequent proofs, it guarantee that we can replace the continuous map $f$ with the Lipschitz maps $F_s$ within the exact same homotopy class.

\begin{lemma}[{\cite[Lemma 4.5 and Lemma 4.7]{RCDbary}}]\label{lem:Fs-homotopic} 
For any $s > h(\bar{X})$, the map $F_s: X \to Y$ is homotopic to the initial map $f$, and it is Lipschitz.
\end{lemma}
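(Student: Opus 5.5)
The plan is to follow the Besson--Courtois--Gallot/Sambusetti template as adapted in \cite{RCDbary}, proving the two assertions of the lemma separately: first that $F_s$ is homotopic to $f$, then that $F_s$ is Lipschitz. Throughout we may assume $f$ is Lipschitz (\Cref{sec:Lip-approx}), so that the lift $\bar f:\bar X\to\til Y$ is Lipschitz and $\Gamma$-equivariant. Here $\til Y$ is a Hadamard manifold, hence $\CAT(0)$, so \Cref{lem:bary_unique} applies to each $\sigma_x^s$. These measures lie in $\mc P^2(\til Y)$: the bound $d(\bar f(x),\bar f(z))\le \mathrm{Lip}(f)\,d(x,z)+C$ and the exponential weight $e^{-s d(x,z)}$ with $s>h(\bar X)$ give finite second moments.

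\emph{Homotopy.} We use the straight-line homotopy in $\til Y$:
\[
H_t(x)=\text{the point at fraction } t \text{ along the geodesic } [\bar f(x),\bar F_s(x)].
\]
Uniqueness of geodesics in $\til Y$ and equivariance of both $\bar f$ and $\bar F_s$ under $\pi_1(X)$ (acting on $\til Y$ through $f_*$) imply that $H_t$ is equivariant. Hence $H_t$ descends to a homotopy $X\times[0,1]\to Y$. Continuity of $H_t$ reduces to continuity of $\bar F_s$. For that we show $x\mapsto\sigma_x^s$ is continuous in the $W_1$ Wasserstein metric (dominated convergence, using $|e^{-sd(x,z)}-e^{-sd(x',z)}|\le s\,d(x,x')e^{-s d(x,z)+s d(x,x')}$). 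Sturm's barycenter is $1$-Lipschitz with respect to $W_1$ on $\CAT(0)$ spaces \cite{Sturm03}, which gives continuity of $\bar F_s$.

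\emph{Lipschitz property.} We aim for the quantitative estimate
\[
d(\bar F_s(x),\bar F_s(x'))\le W_1(\hat\sigma_x^s,\hat\sigma_{x'}^s),
\]
where $\hat\sigma$ denotes the normalized probability measure. Two ingredients are needed.

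First, the normalized densities $e^{-sd(x,\cdot)}/\|\mu_x^s\|$ vary Lipschitz-continuously in $x$. The ratio $\|\mu_x^s\|/\|\mu_{x'}^s\|$ lies in $[e^{-s d(x,x')},e^{s d(x,x')}]$ by the triangle inequality.

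Second, we bound $W_1$ of pushforwards via the Kantorovich--Rubinstein dual. For a $1$-Lipschitz test function $\phi$, write $\phi\circ\bar f$, subtract its value at $\bar f(x)$, and bound $|\phi\circ\bar f(z)-\phi\circ \bar f(x)|\le \mathrm{Lip}(f)\,d(x,z)+C$. Integrating this against the difference of the normalized densities gives
\[
W_1(\hat\sigma_x^s,\hat\sigma_{x'}^s)\le C(s)\,d(x,x')\int (1+d(x,z))\,d\hat\mu_x^s(z)
\]
for $d(x,x')\le 1$. The integral is bounded uniformly in $x$: by cocompactness of the $\Gamma$-action it suffices to check this on a fundamental domain, and there it follows from $s>h(\bar X)$. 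Local Lipschitz bounds then globalize because $\bar X$ is a length space. The bound descends to $F_s$ on the compact quotient since the covering maps are local isometries.

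\emph{Expected obstacle.} The main difficulty is the uniform control of the moments $\int d(x,z)^k\,d\mu^s_x$. These need $s>h(\bar X)$ together with a uniform comparison of ball volumes in $\bar X$ across basepoints. That comparison comes from Bishop--Gromov for Alexandrov spaces combined with cocompactness of $\Gamma$, and it is exactly where we would invoke the $\RCD$ machinery of \cite{RCDbary}.
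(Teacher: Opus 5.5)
Your proposal is correct and follows the standard route behind the cited lemmas of \cite{RCDbary}: a $\Gamma$-equivariant geodesic homotopy from $\bar f$ to $\bar F_s$ in the Hadamard space $\til Y$, and a Lipschitz bound from Sturm's contraction $d(\bary(\alpha),\bary(\beta))\le W_1(\alpha,\beta)$ combined with the pointwise estimate $|\rho_x-\rho_{x'}|\le (e^{2s\,d(x,x')}-1)\,\rho_x$ for the normalized densities $\rho_x=e^{-s\,d(x,\cdot)}/\|\mu^s_x\|$. The one inaccuracy is your \emph{expected obstacle}: the uniform moment bound needs no Bishop--Gromov or other curvature input on $X$, because for $d(x,x_0)\le D$ the triangle inequality gives $\int d(x,z)\,d\mu^s_x(z)\le e^{sD}\int (d(x_0,z)+D)\,d\mu^s_{x_0}(z)$ and $\|\mu^s_x\|\ge e^{-sD}\|\mu^s_{x_0}\|$, and cocompactness lets you take $D=\operatorname{diam}X$.
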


In addition to these mapping properties, we will rely on the following crucial analytic propertyof the map $F_s$. 
This next entropy Jacobian bound delineates a precise upper bound on the volume distortion of $F_s$.

\begin{proposition}[{Entropy Jacobian bound, \cite[Proposition 4.8]{RCDbary}}]\label{prop:entropy-jacobian}
For any $s > h(\bar{X})$ and almost every $x \in X$, the absolute value of the Jacobian of the natural map $F_s$ is bounded by
\[
    |\operatorname{Jac}_{F_s}(x)| \le \left( \frac{s}{h(\til{Y})} \right)^N.
\]
\end{proposition}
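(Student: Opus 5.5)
The plan is to follow the Besson--Courtois--Gallot scheme in the metric-measure form of \cite{RCDbary}, pointwise at a.e.\ $x$. I would first lift everything to $\bar{X}$ and $\til{Y}$, where $\bar{F}_s(x)=\bary(\sigma_x^s)$ is characterised by the first-order condition
\[
\int_{\bar{X}} d_{\bar{F}_s(x)}\bigl(B_{\theta}\bigr)(\bar{f}(z))\, e^{-s d(x,z)}\, d\bar{m}(z)=0 .
\]
Here I use the Busemann-type form (or the $d^2$ gradient in the CAT$(0)$ formulation of Lemma~\ref{lem:bary_unique}). Since $\til{Y}$ is a smooth negatively curved symmetric space, the function $\mc{B}_{\sigma}$ is smooth and strictly convex. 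The implicit function theorem therefore applies on the target side. The only nonsmooth ingredient is the dependence of $d(x,z)$ on $x$.

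The first step is to differentiate in $x$. At a point $x$ where Lytchak's Theorem~\ref{thm:lytchak-rademacher} gives a linear differential $D_x\bar{F}_s:K_x\bar{X}\cong\R^N\to T\til{Y}$, and where the distance functions $d(\cdot,z)$ are differentiable for $\bar{m}$-a.e.\ $z$ with gradient of norm at most $1$, I would obtain, for $u\in K_x\bar X$ and $v\in T_{\bar F_s(x)}\til Y$,
\[
\langle K_x\, D_x\bar F_s(u),v\rangle = s\int \langle \nabla d(x,z),u\rangle\,\langle \nabla B(\bar f(z)), v\rangle\, d\sigma ,
\]
with $K_x=\int \operatorname{Hess}\mc{B}\, d\sigma$ and the measure normalised to a probability measure. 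To justify differentiating under the integral, I would use dominated convergence with the bound $e^{-sd}$ and $s>h(\bar X)$. This is the step where I expect the main difficulty: one has to get differentiability of $x\mapsto d(x,z)$ at a common full-measure set of $x$. My first attempt would be Fubini on $\bar X\times\bar X$ together with Lytchak's theorem applied to each $d(\cdot,z)$, restricting to regular points.

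Next comes Cauchy--Schwarz applied to $\det$. Writing $H=\int \nabla B\otimes\nabla B\, d\sigma$, which has trace $1$, I would get
\[
|\det D_x\bar F_s|\le s^N\frac{\det(H)^{1/2}}{\det K_x}.
\]
This uses $\int\langle\nabla d,u\rangle^2\le |u|^2$. The BCG algebraic lemma for rank-one symmetric spaces, $\det H^{1/2}/\det(K)\le h(\til Y)^{-N}$ (with the Hessian comparison $\operatorname{Hess}B\ge$ the appropriate model), then gives $|\Jac_{\bar F_s}(x)|\le (s/h(\til Y))^N$. Since this bound holds on the cover, equivariance lets it descend to $F_s$ a.e. The remaining step is to identify this linear-algebra Jacobian with the $\Jac$ of Definition~\ref{def:metric-Jacobian} and Lemma~\ref{lem:Jacobian}. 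That identification follows because the reference measure is $\Hn$, as discussed before Lemma~\ref{lem:Jacobian}.
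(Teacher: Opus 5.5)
\paragraph{Determinant bookkeeping.} There are two genuine problems, both in the determinant step. In the first, one link of your chain is false. From $\int\langle\nabla d,u\rangle^2\,d\sigma\le|u|^2$ you only get $\det\bigl(\int\nabla d\otimes\nabla d\,d\sigma\bigr)\le 1$. The BCG algebraic lemma reads $\det(H)^{1/2}/\det K\le(\sqrt N/h(\til Y))^N$, not $h(\til Y)^{-N}$. In $\mathbb{H}^N$, where $K=I-H$, already $H=I/N$ gives the ratio $(\sqrt N/(N-1))^N$. So your two inequalities, combined correctly, give only $(\sqrt N\,s/h(\til Y))^N$. The missing factor $N^{-N/2}$ must come from the domain side. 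Since $|\nabla d(x,z)|\le 1$, the matrix $\int\nabla d\otimes\nabla d\,d\sigma$ has trace at most $1$, so its determinant is at most $N^{-N}$ by AM--GM, and you must keep this in the Cauchy--Schwarz determinant inequality.

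\paragraph{Which barycenter.} The second problem is more serious: you treat the Busemann barycenter and the $d^2$-barycenter as interchangeable, and they are not. Your normalisation $\operatorname{tr}H=1$ and the Hessian structure you invoke are facts about Busemann functions. Those require a measure on $\partial\til Y$, for instance via Patterson--Sullivan or visual measures, and that produces a different map. The map in the statement is the $d^2$-barycenter of $\sigma_x^s=\bar f_*\mu_x^s$ on $\til Y$ itself. For it, the relevant gradient is $\nabla_y\tfrac12 d^2(\bar f(z),y)=r\,\theta$, with $r=d(\bar f(z),\bar F_s(x))$ and $|\theta|=1$. Hence $\operatorname{tr}H=\int r^2\,d\sigma$, while $K=\int\operatorname{Hess}\tfrac12 d^2\,d\sigma$ grows only linearly in $r$. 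With unweighted Cauchy--Schwarz, the ratio $\det(H)^{1/2}/\det K$ is not governed by the BCG lemma at all. It is even unbounded over probability measures: put mass $u/R$ uniformly on a sphere of radius $R$ and the rest at its center, then let $R\to\infty$. The missing idea is to weight Cauchy--Schwarz by $r$:
\[
|\langle K_x D_x\bar F_s(u),v\rangle|\le s\Bigl(\int r\,\langle\nabla d(x,z),u\rangle^2\,d\sigma\Bigr)^{1/2}\Bigl(\int r\,\langle\theta,v\rangle^2\,d\sigma\Bigr)^{1/2}.
\]
Then use $\operatorname{tr}\int r\,\nabla d\otimes\nabla d\,d\sigma\le\int r\,d\sigma$ together with $\operatorname{Hess}\tfrac12 r^2\ge r\operatorname{Hess}r\ge r\bigl(g-dr^2+\sum_i(dr\circ J_i)^2\bigr)$; in $\mathbb{H}^N$ this is just $r\coth r\ge r$. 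Normalising by $\int r\,d\sigma$ puts you exactly in the setting of the BCG lemma and yields $(s/h(\til Y))^N$.

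The rest of your plan is sound: the Fubini and dominated-convergence differentiation, the descent to $X$, and the identification with Lemma~\ref{lem:Jacobian}. Note that the paper itself gives no argument here and simply imports the bound from \cite{RCDbary}.
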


\section{Proof of the entropy degree theorem for Alexandrov spaces}\label{sec:proofs}

\begin{proof}[Proof of \Cref{thm:Alex-entropy-degree}]
By the Lipschitz approximation properties of finite-dimensional Alexandrov spaces explained in \Cref{sec:Lip-approx}, the continuous map $f \colon X \to Y$ can be uniformly approximated by, and is therefore homotopic to, a Lipschitz map $f' \colon X \to Y$. 
Because the absolute topological degree and the induced homomorphisms on the fundamental group are homotopy invariants, we have $|\deg(f')| = |\deg(f)|$ and 
$$[\pi_1(Y) : f'_*\pi_1(X)] = [\pi_1(Y) : f_*\pi_1(X)].$$ 
So the intermediate covering space $\overline{X}$ and its volume entropy are identical for both maps. 
To prove the inequality in \Cref{thm:Alex-entropy-degree}, we may therefore evaluate the analytical invariants using the Lipschitz representative $f'$, so that the metric Jacobian and current pushforwards are all well-defined.

We saw in \Cref{sec:N-rect-Alex} that $X$ is rectifiable with respect to the $N$-Hausdorff measure $\mathcal{H}^N$.  
Because both $X$ and $Y$ have the same dimension $N$, we may apply the area formula in \Cref{eq:Kirchheim-Area}. 
By \Cref{thm:an-deg-equals-top-deg}, the analytical and topological degrees coincide. 
Therefore, using the signed metric Jacobian $\Jac_{F_s}(x)$ from Definition \ref{def:metric-Jacobian}, we have that:
\begin{equation}\label{eq:abs-deg-H-Jac}
    |\deg(F_s)| \mathcal{H}^N(Y) = \left| \int_X \Jac_{F_s}(x) \, d\mathcal{H}^N(x) \right|.
\end{equation}
Passing the absolute value inside the integral, we bound this by the integral of the unsigned Jacobian. 
By Lemma \ref{lem:Jacobian}, $|\Jac_{F_s}(x)|$ corresponds exactly to the unsigned volume-ratio Jacobian.
Applying the volume entropy upper bound from \Cref{prop:entropy-jacobian} we obtain:
\begin{eqnarray*}
 \left| \int_X \Jac_{F_s}(x) \, d\mathcal{H}^N(x) \right|  & \le & \int_X |\Jac_{F_s}(x)| \, d\mathcal{H}^N(x) \\
                                            & \le & \int_X \left( \frac{s}{h(\til{Y})} \right)^N d\mathcal{H}^N(x) \\ 
                                            & = & \left( \frac{s}{h(\til{Y})} \right)^N \mathcal{H}^N(X).
\end{eqnarray*}
By Lemma \ref{lem:Fs-homotopic}, $F_s$ is Lipschitz and is homotopic to the initial map $f$. 
Because the topological degree is a homotopy invariant, we have $|\deg(F_s)| = |\deg(f)|$. 
Chaining the inequalities together we obtain:
\begin{equation}\label{eq:deg-H-h-chain}
    |\deg(f)| \mathcal{H}^N(Y) \le \left( \frac{s}{h(\til{Y})} \right)^N \mathcal{H}^N(X).
\end{equation}
As the inequality \eqref{eq:deg-H-h-chain} holds for all $s > h(\overline{X})$, letting $s \to h(\overline{X})$ we find the desired inequality:
$$ h(\overline{X})^N \mathcal{H}^N(X) \ge |\deg(f)| h(\til{Y})^N \mathcal{H}^N(Y). $$

Now we consider the equality case. 
Assume that equality holds in \Cref{eq:Alex-entropy-degree-ineq} for the continuous map $f$.
Observe that this implies equality also holds for our Lipschitz approximation $f'$. 
Set $k = |\deg(f')|$. 
By hypothesis, the degree equals the algebraic index, $k = [\pi_1(Y) : f'_*\pi_1(X)]$. 

Let $\hat{Y}$ be the finite Riemannian cover of $Y$ of degree $k$ corresponding to the subgroup $f'_*\pi_1(X) \subset \pi_1(Y)$. 
Using covering space theory, we lift the Lipschitz map $f'$ to a map $\hat{f}' \colon X \to \hat{Y}$. 
By the definition of the cover, $[\pi_1(\hat{Y}) : \hat{f}'_*\pi_1(X)] = 1$. Because the covering projection $p \colon \hat{Y} \to Y$ has degree $k$, the multiplicativity of the topological degree ($|\deg(f')| = |\deg(p)| \cdot |\deg(\hat{f}')|$) implies that $|\deg(\hat{f}')| = 1$. 
Scaling the metric so that $h(\overline{X}) = h(\tilde{Y})$, the equality in \Cref{eq:Alex-entropy-degree-ineq} implies $\mathcal{H}^N(X) = k \mathcal{H}^N(Y) = \mathcal{H}^N(\hat{Y})$.

Since $[\pi_1(\hat{Y}) : \hat{f}'_*\pi_1(X)] = 1$, we apply the barycenter method directly to the map $\hat{f}' \colon X \to \hat{Y}$.
The sequence of natural maps $\hat{F}'_{s_i}$ (parameterized by the entropy scaling factor) sub-converges to a limit map $\hat{F} \colon X \to \hat{Y}$ which is $1$-Lipschitz. Because the target manifold $\hat{Y}$ is negatively curved, the uniqueness of geodesics guarantees a well-defined  homotopy, so that $\hat{F}$ is homotopic to $\hat{f}'$, so $|\deg(\hat{F})| = 1$.

Because $\hat{F}$ is $1$-Lipschitz, its metric Jacobian is bounded by $1$ almost everywhere, that is, $|\Jac_{\hat{F}}(x)| \le 1$. 
Applying \Cref{thm:an-deg-equals-top-deg} to $\hat{F}$, we evaluate the area formula:
\begin{eqnarray*}
 \mathcal{H}^N(\hat{Y})     & = & |\deg(\hat{F})| \mathcal{H}^N(\hat{Y}) \\
                            & = & \left| \int_X \Jac_{\hat{F}}(x) \, d\mathcal{H}^N(x) \right| \\
                            & \le &  \int_X |\Jac_{\hat{F}}(x)| \, d\mathcal{H}^N(x) \\
                            & \le & \int_X {\bf 1} \, d\mathcal{H}^N(x) \\
                            & = & \mathcal{H}^N(X).
\end{eqnarray*}
Because the equality hypothesis requires $\mathcal{H}^N(X) = \mathcal{H}^N(\hat{Y})$, the inequalities above are strict equalities.
Hence, $|\Jac_{\hat{F}}(x)| = 1$ for $\mathcal{H}^N$-almost every $x \in X$, meaning $\hat{F}$ is volume-preserving.

 Recall that a continuous map of non-zero degree between closed, orientable pseudomanifolds is surjective because the image carries the fundamental class.
As $X$ is a non-collapsed Alexandrov space without boundary, and $\hat{F} \colon X \to \hat{Y}$ is a volume-preserving, $1$-Lipschitz map, by the Lipschitz-volume rigidity theorem for Alexandrov spaces (\Cref{thm:Lip-Vol-Rigidity}) we have that $\hat{F}$ must be a metric isometry. 
Because $\hat{F}$ is an isometry, and $\hat{Y}$ is a smooth locally symmetric Riemannian manifold, this metric equivalence makes $X$ inherit this smooth Riemannian geometry.

Finally, we deduce the topological rigidity conclusion for the initial continuous map $f$. 
Let $p \colon \hat{Y} \to Y$ be the locally isometric Riemannian covering map. 
The composition $F = p \circ \hat{F} \colon X \to Y$ is a Riemannian covering map of degree equal to $\deg(p) = [\pi_1(Y) : f'_*\pi_1(X)] = |\deg(f)|$. 

By the topological properties of the barycenter method, the limit map $\hat{F}$ is homotopic to the lifted Lipschitz map $\hat{f}'$. 
Projecting this homotopy down to the base space $Y$ via $p$, it follows that the Riemannian covering $F = p \circ \hat{F}$ is homotopic to $p \circ \hat{f}' = f'$. 
Since our approximation construction assumes that the initial continuous map $f$ is homotopic to the Lipschitz map $f'$, the transitivity of the homotopy relation implies $f \simeq f' \simeq F$. 
As a result, the initial continuous map $f$ is homotopic to the Riemannian covering map $F$, completing the proof.
\end{proof}

\subsection{Branched Covers and Einstein Orbifolds}

We now  prove Corollaries \ref{cor:hamenstadt-gen} and \ref{cor:einstein-obstruction}.

\begin{proof}[Proof of Corollary \ref{cor:hamenstadt-gen}]
Let $M$ be a closed hyperbolic $N$-manifold ($N \ge 3$), and $X = M_d / \mathbf{Z}_d$ be the quotient of its degree-$d$ branched cover over $\Sigma$. 
Because $d \ge 2$, the cone angle along the singular locus $\Sigma$ is exactly $2\pi/d \le \pi$. 
By classical Alexandrov geometry, the curvature of $X$ is then bounded below by $-1$. 
Thus, $X$ is a non-collapsed Alexandrov space  and by the Bishop--Gromov volume comparison theorem its volume entropy satisfies $h(\til{X}) \le N-1$.

Consider the identity map $\mathbf{id}: X \to M$. 
As the underlying topological space of $X$ is $M$, the absolute topological degree of this map equals $1$, and the induced map on fundamental groups is an isomorphism. 
Hence, the intermediate cover $\overline{X}$ corresponds to the universal cover $\tilde{X}$, and so $h(\overline{X}) = h(\til{X}) \le N-1$.

Applying \Cref{thm:Alex-entropy-degree}, we obtain:
\begin{eqnarray*}
(N-1)^N \mathcal{H}^N(X) & \ge &  h(\overline{X})^N \mathcal{H}^N(X) \\
					& \ge &  |\deg(\mathbf{id})| h(\tilde{M})^N \mathcal{H}^N(M) \\
					& = & (N-1)^N \mathcal{H}^N(M).
\end{eqnarray*}
From which we simplify to $\mathcal{H}^N(X) \ge \mathcal{H}^N(M)$. 

We now analyze the equality case. 
By the rigidity statement of \Cref{thm:Alex-entropy-degree}, equality holds if and only if $X$ is globally isometric to a smooth hyperbolic manifold (after rescaling). 
However, for $d \ge 2$, the space $X$ has a non-empty singular cone locus along $\Sigma$ (since the angle is strictly less than $2\pi$), which prevents it from being a smooth Riemannian manifold. 
Therefore, equality is impossible, yielding the strict inequality:
\[
\mathcal{H}^N(X) > \mathcal{H}^N(M).
\]
Lifting this metric to the branched cover $M_d$ multiplies the volume by exactly $d$, proving that $\operatorname{Vol}(M_d) > d \cdot \operatorname{Vol}(M)$. 
\end{proof}

\subsection{Einstein metrics on $4$-orbifolds}

\begin{proof}[Proof of Corollary \ref{cor:einstein-obstruction}]
Let $X$ be an orientable and closed Alexandrov 4-orbifold with  non-empty singular set, $S_{X}\neq \emptyset$, that admits a negatively curved Einstein metric $g_E$ rescaled so that $\operatorname{Ric}(g_E) = -3g_E$.
 We know its volume entropy is bounded by $h(\til{X}) \le 3$, and since $\overline{X}$ is covered by $\til{X}$, we have $h(\overline{X}) \le 3$ for any covering map.

In dimension four, the Chern--Gauss--Bonnet theorem for orbifolds (see Satake \cite{Satake57}) bounds the volume by the orbifold Euler characteristic because the squared norm of the Weyl tensor is non-negative:
\[
\operatorname{Vol}(X, g_E) \le \frac{4\pi^2}{3} \chi_{{\rm orb}}(X).
\]
Let $Y$ be a smooth hyperbolic 4-manifold, so then $\operatorname{Vol}(Y) = \frac{4\pi^2}{3} \chi(Y)$ and $h(\til{Y}) = 3$.
Let $f: X \to Y$ a Lipschitz map. 
Applying the inequality in \Cref{eq:Alex-entropy-degree-ineq} yields:
\begin{eqnarray*}
3^4 \operatorname{Vol}(X, g_E) & \ge & h(\overline{X})^4 \operatorname{Vol}(X, g_E) \\
						&  \ge &  |\deg(f)| h(\til{Y})^4 \operatorname{Vol}(Y) \\
						& = &  |\deg(f)| 3^4 \frac{4\pi^2}{3} \chi(Y).
\end{eqnarray*}
Therefore, $\operatorname{Vol}(X, g_E) \ge |\deg(f)| \frac{4\pi^2}{3} \chi(Y)$. 

By the rigidity statement of \Cref{thm:Alex-entropy-degree}, equality would imply that $X$ is a smooth Riemannian manifold, contradicting $S_{X}\neq \emptyset$. 
So the inequality must be strict:
\[
\frac{4\pi^2}{3} \chi_{{\rm orb}}(X) \ge \operatorname{Vol}(X, g_E) > |\deg(f)| \frac{4\pi^2}{3} \chi(Y).
\]
Which implies the claimed topological obstruction:
\[
\chi_{{\rm orb}}(X) > |\deg(f)| \chi(Y).
\]

To see the constraint this places on the branch locus of a Gromov--Thurston manifold, set $X = M_d / \mathbf{Z}_d$, $Y = M$, and $f = \mathrm{id}$ (so $|\deg(f)| = 1$). 
 Observe that the $Z_d$-invariant Einstein metric descends to a smooth orbifold metric on the quotient $X$, which is an Alexandrov space. 
 Because the normalized Einstein metric provides a uniform lower Ricci curvature bound, the Bishop-Gromov inequality applies, bounding the volume entropy by $N-1$ so we may apply \Cref{thm:Alex-entropy-degree}.

By the Riemann--Hurwitz formula for orbifolds, the Euler characteristic is given by:
\[
\chi_{{\rm orb}}(X) = \chi(M) - \left(1 - \frac{1}{d}\right)\chi(\Sigma).
\]
Substituting this into our obstruction we obtain,
\[
\chi(M) - \left(1 - \frac{1}{d}\right)\chi(\Sigma) > \chi(M).
\]
Therefore, $$-\left(1 - \frac{1}{d}\right)\chi(\Sigma) > 0.$$ 
As $d \ge 2$, the branching factor $\left(1 - \frac{1}{d}\right)$ is strictly positive, so $\chi(\Sigma) < 0$.
\end{proof}

\subsection{Volume bounds for singular spaces via metric doubling}

We provide the proofs for Corollaries \ref{cor:cone-manifold}, \ref{cor:manifold-boundary}, and \ref{cor:bonahon-degree}.

\begin{proof}[Proof of Corollary \ref{cor:cone-manifold}]
By the structure theorem of Burago, Gromov, and Perelman \cite[p. 7]{burago1992ad}, a cone-manifold $Z$ with cone angles $\le 2\pi$ and sectional curvatures $ \ge -1$ is an Alexandrov space with curvature bounded below by $-1$. 
The volume entropy of its universal cover is bounded by $h(\til{Z}) \le N-1$.

Because $\overline{Z}$ is a covering space of $Z$ dominated by the universal cover $\til{Z}$, its volume growth entropy satisfies $h(\overline{Z}) \le h(\til{Z}) \le N-1$. 
Substituting $h(\overline{Z}) \le N-1$ into the inequality of \Cref{eq:Alex-entropy-degree-ineq} we find:
\[
(N-1)^N \mathcal{H}^N(Z) \ge h(\overline{Z})^N \mathcal{H}^N(Z) \ge |\deg(f)| h(\til{Y})^N \mathcal{H}^N(Y).
\]
\end{proof}

\begin{proof}[Proof of Corollary \ref{cor:manifold-boundary}]
Let $DZ$ and $DY_{{\rm geod}}$ denote the metric doublings of $Z$ and $Y_{{\rm geod}}$ across their boundaries, in that order. 
Because $Y_{{\rm geod}}$ has totally geodesic boundary, $DY_{{\rm geod}}$ is a closed hyperbolic $N$-manifold, so $h(\widetilde{DY}_{{\rm geod}}) = N-1$.

The space $Z$ is a convex Riemannian manifold with interior sectional curvature bounded below by $-1$.
 Therefore, $Z$ is locally an Alexandrov space with curvature bounded below by $-1$.
  By Perelman's doubling theorem \cite[Theorem 5.2]{Perelman91}, the metric doubling $DZ$ is an Alexandrov space with curvature bounded below by $-1$. 
  Hence $h(\widetilde{DZ}) \le N-1$.

The proper continuous map $f: (Z, \partial Z) \to (Y_{{\rm geod}}, \partial Y_{{\rm geod}})$ of relative degree $k$ extends by reflections to a continuous map $Df: DZ \to DY_{geod}$ of absolute degree $|k|$. 
Applying \Cref{thm:Alex-entropy-degree} to $Df$ gives:
\[
h(\overline{DZ})^N \operatorname{Vol}(DZ) \ge |\deg(Df)| h(\widetilde{DY}_{{\rm geod}})^N \operatorname{Vol}(DY_{{\rm geod}}).
\]
Using $h(\overline{DZ}) \le h(\widetilde{DZ}) \le N-1$, $|\deg(Df)| = |k|$, and $h(\widetilde{DY}_{{\rm geod}}) = N-1$, we obtain:
\[
(N-1)^N \operatorname{Vol}(DZ) \ge |k| (N-1)^N \operatorname{Vol}(DY_{{\rm geod}}).
\]
From which it follows that $\operatorname{Vol}(DZ) \ge |k| \operatorname{Vol}(DY_{{\rm geod}})$. 
Finally, $\operatorname{Vol}(DZ) = 2 \operatorname{Vol}(Z)$ and $\operatorname{Vol}(DY_{{\rm geod}}) = 2 \operatorname{Vol}(Y_{{\rm geod}})$ imply $\operatorname{Vol}(Z) \ge |k| \operatorname{Vol}(Y_{{\rm geod}})$.
\end{proof}

\subsection{Lower bounds on volumes of convex cores}

\begin{proof}[Proof of Corollary \ref{cor:bonahon-degree}]
Let $DC_M$ and $DC_{M_g}$ denote the metric spaces obtained by doubling the convex cores $C_M$ and $C_{M_g}$ across their respective boundaries. 
Because $\partial C_{M_g}$ is totally geodesic, $DC_{M_g}$ is a closed hyperbolic 3-manifold. 
Therefore, $h(\widetilde{DC}_{M_g}) = 2$. 
For $C_M$, an $\epsilon$-neighborhood $\mathcal{N}_\epsilon(C_M)$ is strictly convex with $C^{1,1}$ boundary. 
Recall that by Perelman’s doubling theorem (see \cite{Perelman91} or the refined formulation in \cite{Kapovitch-2007}), the metric double of a compact Alexandrov space with locally convex boundary is an Alexandrov space without boundary, preserving the lower curvature bound. 
This construction extends to the $\epsilon$-neighborhoods of convex cores \cite[Theorem 5.3]{Storm-2002}, so that that the resulting double $D\mathcal{N}_\epsilon(C_M)$ inherits the curvature bound $\kappa \ge -1$.
Taking the Gromov-Hausdorff limit as $\epsilon \to 0$, we see that $DC_M$ is an Alexandrov space with curvature bounded below by $-1$. 
Thus, $h(\widetilde{DC}_M) \le 2$.

The proper map $f: (C_M, \partial C_M) \to (C_{M_g}, \partial C_{M_g})$ extends via reflections across the boundaries to a continuous map $Df: DC_M \to DC_{M_g}$. 
The absolute value of the topological degree is $|\deg(Df)| = |k|$. 
Applying \Cref{thm:Alex-entropy-degree} to $Df$ we obtain,
\[
h(\overline{DC}_M)^3 \operatorname{Vol}(DC_M) \ge |\deg(Df)| h(\widetilde{DC}_{M_g})^3 \operatorname{Vol}(DC_{M_g}).
\]
Substituting $h(\overline{DC}_M) \le 2$, $h(\widetilde{DC}_{M_g}) = 2$, and $|\deg(Df)| = |k|$ we find
\[
2^3 \operatorname{Vol}(DC_M) \ge |k| 2^3 \operatorname{Vol}(DC_{M_g}).
\]
Therefore, $\operatorname{Vol}(DC_M) \ge |k| \operatorname{Vol}(DC_{M_g})$. 
Notice that the volume of a doubled space is twice the volume of the original space, from which we conclude that $\operatorname{Vol}(C_M) \ge |k| \operatorname{Vol}(C_{M_g})$.
\end{proof}

\begin{proof}[Proof of Corollary \ref{cor:unified-volume}]
By \cite[Theorem C]{Bridgeman-2023}, the volumes of the convex cores $C_{M'}$ and $C_{M'_{{\rm  geod}}}$ satisfy the inequality
\begin{equation}\label{eq:WP-core-vol-bd}
\operatorname{Vol}(C_{M'}) - \operatorname{Vol}(C_{M'_{{\rm geod}}}) \ge A(\partial Z')d_{WP}(\partial_c M', \partial_c M'_{{\rm geod}}) - \delta .
\end{equation}
The manifolds $M'$ and $M'_{{\rm geod}}$ belong to the deformation space $CC(Z')$. 
The convex cores $C_{M'}$ and $C_{M'_{{\rm geod}}}$ are compact manifolds with boundary and are diffeomorphic. 
There exists a proper  homeomorphism $h: (C_{M'_{{\rm geod}}}, \partial C_{M'_{{\rm geod}}}) \to (C_{M'}, \partial C_{M'}) $ of relative degree $1$. 

The composition $f \circ h: (C_{M'_{{\rm geod}}}, \partial C_{M'_{{\rm geod}}}) \to (C_{M_{{\rm geod}}}, \partial C_{M_{{\rm geod}}})$ is a proper continuous map.
 The relative degree of $f \circ h$ is the product of the relative degrees of $f$ and $h$, which equals $k$.

Applying Corollary \ref{cor:bonahon-degree} to the map $f \circ h$ we obtain
\[
\operatorname{Vol}(C_{M'_{{\rm geod}}}) \ge |k| \operatorname{Vol}(C_{M_{{\rm geod}}}).
\]
Substituting this lower bound into the inequality (\ref{eq:WP-core-vol-bd}) produces
\[
\operatorname{Vol}(C_{M'}) \ge |k| \operatorname{Vol}(C_{M_{{\rm geod}}}) + A(\partial Z')\big( d_{WP}(\partial_c M', \partial_c M'_{{\rm geod}}) - \delta \big).
\]
\end{proof}

\subsection{Translation length and volumes of mapping tori}

\begin{proof}[Proof of Corollary \ref{cor:translation-length}]
By \cite[Theorem 1.1]{Field-2023}, the infimal volume of all convex hyperbolic metrics $\underline{\operatorname{Vol}}(\overline{M}_{\phi})$ on $\overline{M}_{\phi}$ satisfies $\underline{\operatorname{Vol}}(\overline{M}_{\phi}) \le V_{{\rm oct}} \tau(\phi)$.

Let $Z$ denote the manifold $\overline{M}_{\phi}$ equipped with an arbitrary convex hyperbolic metric. 
The manifold $Z$ is a convex Riemannian $3$-manifold with boundary and constant sectional curvature $-1$. 
By Perelman's doubling theorem \cite[Theorem 5.2]{Perelman91}, the metric doubling $DZ$ across $\partial Z$ is an Alexandrov space with curvature bounded below by $-1$,  so  $h(\widetilde{DZ}) \le 2$.
The target $Y$ is a compact convex hyperbolic $3$-manifold with totally geodesic boundary. 
The metric doubling $DY$ across $\partial Y$ is a closed hyperbolic $3$-manifold, so $h(\widetilde{DY}) = 2$.

The map $g: (\overline{M}_{\phi}, \partial \overline{M}_{\phi}) \to (Y, \partial Y)$ of relative degree $k$ extends via reflections across the boundaries to a continous map $Dg: DZ \to DY$ of absolute degree $|k|$. 
Applying \Cref{thm:Alex-entropy-degree} to $Dg$ gives
\[
h(\overline{DZ})^3 \operatorname{Vol}(DZ) \ge |\deg(Dg)| h(\widetilde{DY})^3 \operatorname{Vol}(DY).
\]
By substituting $h(\overline{DZ}) \le h(\widetilde{DZ}) \le 2$, $h(\widetilde{DY}) = 2$, and $|\deg(Dg)| = |k|$ we find
\[
2^3 \operatorname{Vol}(DZ) \ge |k| 2^3 \operatorname{Vol}(DY).
\]
Therefore $\operatorname{Vol}(DZ) \ge |k| \operatorname{Vol}(DY)$. 
Moreover, $\operatorname{Vol}(DZ) = 2 \operatorname{Vol}(Z)$ and $\operatorname{Vol}(DY) = 2 \operatorname{Vol}(Y)$. 
Substituting these equations yields
\[
\operatorname{Vol}(Z) \ge |k| \operatorname{Vol}(Y).
\]
Taking the infimum over all convex hyperbolic metrics on $\overline{M}_{\phi}$ we obtain 
\begin{equation}\label{eq:minvol-Mf}
\underline{\operatorname{Vol}}(\overline{M}_{\phi}) \ge |k| \operatorname{Vol}(Y).
\end{equation}

Substituting $\underline{\operatorname{Vol}}(\overline{M}_{\phi}) \le V_{{\rm oct}} \tau(\phi)$ into \Cref{eq:minvol-Mf} we get
\[
V_{{\rm oct}} \tau(\phi) \ge |k| \operatorname{Vol}(Y).
\]
Dividing by $V_{{\rm oct}}$ proves the claim, $\tau(\phi) \ge \frac{|k|}{V_{{\rm oct}}} \operatorname{Vol}(Y)$.
\end{proof}

\subsection{Degree-weighted version of Bonahon's conjecture}

\begin{proof}[Proof of  \Cref{thm:simplicial-volume}]

The manifold $M$ is a compact orientable acylindrical $3$-manifold with incompressible boundary and no torus boundary components. 
By Thurston's original  Geometrization Theorem \cite{Thurston-1986} (see also \cite[Section 8.8]{Kapovich-2001}), there exists a unique convex cocompact hyperbolic $3$-manifold $M_{{\rm geod}}$ homeomorphic to the interior of $M$ such that the boundary of its convex core $\partial C_{M_{{\rm geod}}}$ is totally geodesic.
So there exists a proper homeomorphism of pairs $g: (M, \partial M) \to (C_{M_{{\rm geod}}}, \partial C_{M_{{\rm geod}}})$. 

The metric double $DC_{M_{{\rm geod}}}$ across the boundary $\partial C_{M_{{\rm geod}}}$ is a closed hyperbolic $3$-manifold. By \cite[Theorem 2.6]{Storm-2007-Duke}, the simplicial volume of the topological double $DM$ equals the volume of this hyperbolic metric, $\| DM \|= \operatorname{Vol}(DC_{M_{{\rm geod}}})$, and $h(\widetilde{DC}_{M_{{\rm geod}}}) = 2$.

The composition $g \circ f: (C_N, \partial C_N) \to (C_{M_{{\rm geod}}}, \partial C_{M_{{\rm geod}}})$ is a proper map of relative degree $k$. 
Then $g \circ f$ extends via reflections across the boundaries to a Lipschitz map $D(g \circ f): DC_N \to DC_{M_{{\rm geod}}}$, with absolute topological degree equal to  $|\deg(D(g \circ f))| = |k|$.

The manifold $Z$ is a convex cocompact hyperbolic $3$-manifold, so the convex core $C_Z$ is compact. 
By Perelman's doubling theorem reviewed above, the metric doubling $DC_Z$ across the boundary $\partial C_Z$ is an Alexandrov space with curvature bounded below by $-1$, and $h(\widetilde{DC}_Z) \le 2$. 

Applying \Cref{thm:Alex-entropy-degree} to the map $D(g \circ f)$ we find
\[
h(\overline{DC}_Z)^3 \operatorname{Vol}(DC_Z) \ge |\deg(D(g \circ f))| h(\widetilde{DC}_{M_{{\rm geod}}})^3 \operatorname{Vol}(DC_{M_{{\rm geod}}}).
\]
Substituting $h(\overline{DC}_Z) \le h(\widetilde{DC}_Z) \le 2$, $h(\widetilde{DC}_{M_{{\rm geod}}}) = 2$, and $|\deg(D(g \circ f))| = |k|$ we obtain $2^3 \operatorname{Vol}(DC_Z) \ge |k| 2^3 \operatorname{Vol}(DC_{M_{{\rm geod}}})$.
Hence $$\operatorname{Vol}(DC_Z) \ge |k| \operatorname{Vol}(DC_{M_{{\rm geod}}}).$$ 
Because $\operatorname{Vol}(DC_Z) = 2 \operatorname{Vol}(C_Z)$ and $\operatorname{Vol}(DC_{M_{{\rm geod}}}) = \| DM \|$,  
we conclude $2 \operatorname{Vol}(C_Z) \ge |k| \| DM \|$.
\end{proof}

\subsection{Boundary volume rigidity for Alexandrov spaces}

\begin{proof}[Proof of \Cref{thm:boundary-rigidity}]
Let $DX$ and $DY$ denote the metric doubles of $X$ and $Y$ across their respective boundaries. 
As $Y$ is a locally symmetric space and $\partial Y$ is totally geodesic, the double $DY$ is a closed negatively curved locally symmetric $N$-manifold, and $h(\widetilde{DY}) = N-1$.

Since $X$ is an  with curvature bounded below by $-1$ and $\partial X$ is locally convex, Petrunin's gluing theorem \cite{Petrunin-1997} implies that its metric double $DX$ is a closed Alexandrov space with curvature bounded below by $-1$ and $h(\widetilde{DX}) \le N-1$. 

As the restriction $f|_{\partial X}$ is a local isometry, the map $f$ extends via boundary reflections to a continuous map $Df: DX \to DY$. 
The topological degree of the extended map is $|\deg(Df)| = |k|$.
Applying \Cref{thm:Alex-entropy-degree} to $Df$ we obtain
\[
h(\widetilde{DX})^N \operatorname{Vol}(DX) \ge |\deg(Df)| h(\widetilde{DY})^N \operatorname{Vol}(DY).
\]
Substituting the entropy bounds produces
\[
(N-1)^N \operatorname{Vol}(DX) \ge |k| (N-1)^N \operatorname{Vol}(DY).
\]
Dividing by $(N-1)^N$ we find $\operatorname{Vol}(DX) \ge |k| \operatorname{Vol}(DY)$. 
The volumes of the doubles satisfy $\operatorname{Vol}(DX) = 2 \operatorname{Vol}(X)$ and $\operatorname{Vol}(DY) = 2 \operatorname{Vol}(Y)$. Therefore, $\operatorname{Vol}(X) \ge |k| \operatorname{Vol}(Y)$.

If $\operatorname{Vol}(X) = |k| \operatorname{Vol}(Y)$, the entropy-volume equality in \Cref{eq:Alex-entropy-degree-ineq} holds. 
The rigidity statement of \Cref{thm:Alex-entropy-degree} implies $DX$ is isometric to a hyperbolic manifold. 
By construction, $DX$ has a topological involution $\iota$ that swaps the two halves.
Mostow's rigidity implies uniqueness of the complete hyperbolic metric on $DX$, so this topological involution is homotopic to a unique isometric involution.
The fixed-point set of this reflection isometry is exactly where the two halves of $DX$ meet, it is  the boundary surface $\partial X$.
Therefore $\partial X \subset DX$ must be a totally geodesic submanifold.
Moreover, $X$ is a smooth Riemannian manifold isometric to a $|k|$-sheeted covering of $Y$. 
If $|k|=1$ and $f|_{\partial X}$ is an isometry, the covering is an isometry.
\end{proof}

\subsection{Metric stability and smoothness of Gromov--Hausdorff limits}

\begin{proof}[Proof of Corollary \ref{cor:gh-stability}]
The sequence $(M_i, g_i)$ satisfies the uniform curvature bound $K(g_i) \ge -1$ and  $\lim_{i \to \infty} \operatorname{Vol}(M_i, g_i) = k \operatorname{Vol}(Y) > 0$. 
Because $(M_i, g_i)$ has a uniform lower curvature bound and a strictly positive lower volume bound, it is non-collapsed. 
Gromov's compactness theorem \cite{Gromov-1981, burago1992ad} implies that the Gromov--Hausdorff limit $(X, d_X)$ is an $N$-dimensional Alexandrov space with curvature bounded below by $-1$.

By the continuity of the Hausdorff measure under non-collapsed Gromov--Hausdorff convergence with uniform lower curvature bounds \cite{burago1992ad}, the volume of the limit space satisfies $$\operatorname{Vol}(X) = \lim_{i \to \infty} \operatorname{Vol}(M_i, g_i) = k \operatorname{Vol}(Y).$$ 

The Bishop-Gromov volume comparison theorem implies $h(\widetilde{X}) \le N-1$ \cite{burago1992ad}, and because $Y$ is a real hyperbolic manifold, $h(\widetilde{Y}) = N-1$.
By Perelman's stability theorem \cite{Perelman91, Kapovitch-2007},  non-collapsed GH--close Alexandrov spaces with uniform lower curvature bounds are homeomorphic. 
Therefore, for sufficiently large $i$, the manifold $M_i$ is homeomorphic to the limit space $X$. 
Let $\phi_i: X \to M_i$ denote such a homeomorphism. 
The composition $F = f_i \circ \phi_i: X \to Y$ is a continuous map. 
The topological degree is invariant under homeomorphisms, so $|\deg(F)| = k$.
Moreover, because $\phi_i$ induces an isomorphism, the  limit map satisfies the following  index property, $[\pi_1(Y) : F_*\pi_1(X)] = [\pi_1(Y) : (f_i)_*(\phi_i)_*\pi_1(X)] = [\pi_1(Y) : (f_i)_*\pi_1(M_i)] = k.$

Applying \Cref{thm:Alex-entropy-degree} to the continuous map $F$ yields
\[
h(\widetilde{X})^N \operatorname{Vol}(X) \ge k h(\widetilde{Y})^N \operatorname{Vol}(Y).
\]
Substituting the bounds $h(\widetilde{X}) \le N-1$, $h(\widetilde{Y}) = N-1$, and $\operatorname{Vol}(X) = k \operatorname{Vol}(Y)$ we obtain
\[
(N-1)^N k \operatorname{Vol}(Y) \ge h(\widetilde{X})^N k \operatorname{Vol}(Y) \ge k (N-1)^N \operatorname{Vol}(Y).
\]
This chain of inequality forces $h(\widetilde{X}) = N-1$, which then implies equality in \Cref{eq:Alex-entropy-degree-ineq}, and thus all the claims follow.
\end{proof}

\begin{proof}[Proof of Corollary \ref{cor:gh-stability2}]
The sequence $(M_i, g_i)$ satisfies the uniform curvature bound $K(g_i) \ge \kappa$. Because the Gromov--Hausdorff limit $(X, d_X)$ is non-collapsed, by Gromov's compactness theorem \cite{Gromov-1981, burago1992ad}, $(X, d_X)$ is an $N$-dimensional Alexandrov space with curvature bounded below by $\kappa$.

By the continuity of the Hausdorff measure under non-collapsed Gromov--Hausdorff convergence with a uniform lower curvature bound, the volume of the limit space satisfies (cf \cite{burago1992ad, Cheeger-Colding-1997}),
\begin{equation}\label{eq:vol-conv}
\operatorname{Vol}(X) = \lim_{i \to \infty} \operatorname{Vol}(M_i, g_i).
\end{equation}

In addition, because of the equivariant, pointed Gromov--Hausdorff convergence the universal covers, the volumes of corresponding metric balls also converge. 
A consequence of this local volume convergence, when combined with the uniform lower curvature bound, is that the volume entropy is lower semi-continuous (see Reviron \cite[Proposition 4.1]{Reviron} and \cite{RCDbary}):
\begin{equation}\label{eq:entropy-lsc}
    h(\til{X}) \le \liminf_{i \to \infty} h(\til{M}_i, g_i).
\end{equation}
 
 Joining \Cref{eq:vol-conv} and \Cref{eq:entropy-lsc}, the entropy-volume product of the limit space then satisfies,  
\[
h(\widetilde{X})^N \operatorname{Vol}(X) \le \liminf_{i \to \infty} \left[ h(\widetilde{M}_i, g_i)^N \operatorname{Vol}(M_i, g_i) \right] = k h(\widetilde{Y}, g_0)^N \operatorname{Vol}(Y, g_0).
\]

By Perelman's stability theorem (cf \cite{Kapovitch-2007}), non-collapsed, GH-close Alexandrov spaces with uniform lower curvature bounds are homeomorphic. 
Therefore, for sufficiently large $i$, the manifold $M_i$ is homeomorphic to the limit space $X$. 
Let $\phi_i: X \to M_i$ denote such a homeomorphism. 
The composition $F = f_i \circ \phi_i: X \to Y$ is a continuous map. 
Because the topological degree is invariant under homeomorphisms, the absolute degree satisfies $|\deg(F)| = k$.
Moreover, because $\phi_i$ induces an isomorphism, the  limit map satisfies the following  index property, $[\pi_1(Y) : F_*\pi_1(X)] = [\pi_1(Y) : (f_i)_*(\phi_i)_*\pi_1(X)] = [\pi_1(Y) : (f_i)_*\pi_1(M_i)] = k.$
Applying \Cref{thm:Alex-entropy-degree} to $X$ the continuous map $F$ we derive the following lower bound
\[
h(\widetilde{X})^N \operatorname{Vol}(X) \ge k h(\widetilde{Y}, g_0)^N \operatorname{Vol}(Y, g_0).
\]
This inequality forces $h(\widetilde{X})^N \operatorname{Vol}(X) = k h(\widetilde{Y}, g_0)^N \operatorname{Vol}(Y, g_0)$, so we have equality in \Cref{thm:Alex-entropy-degree}, and once again the claims follow.
\end{proof}

\subsection{Simplicial Volume of Cone-Manifolds}

\begin{proof}[Proof of  Corollary \ref{cor:cone-simplicial-volume}]
By the Bishop-Gromov volume comparison theorem for Alexandrov spaces, the curvature lower bound implies that the volume entropy of $X$ satisfies $h(\til{X}) \le N-1$. 
For the smooth hyperbolic target $Y$, we have $h(\til{Y}) = N-1$ and, by the classical Gromov--Thurston proportionality theorem, $\operatorname{Vol}(Y) = v_N \|Y\|$.

Applying the entropy-degree theorem to the map $f$ we find,
\begin{equation}
    k \operatorname{Vol}(Y) \le \left( \frac{h(\til{X})}{N-1} \right)^N \mathcal{H}^N(X) \le \mathcal{H}^N(X),
\end{equation}
which proves the first inequality. 

If $f$ is a homotopy equivalence, the topological invariance of simplicial volume implies $\|X\| = \|Y\|$, and we obtain $$\mathcal{H}^N(X) \ge v_N \|X\|.$$

If equality holds, then $\mathcal{H}^N(X) = k \operatorname{Vol}(Y)$ and $h(\til{X}) = N-1$.
 The volume rigidity theorem implies that in this case $X$ is isometric to a smooth hyperbolic manifold. 
However, assuming the cone-manifold with cone angles $< 2\pi$ has metric singularities, it cannot be isometric to a smooth manifold. 
Therefore, $\mathcal{H}^N(X) > v_N \|X\|.$
\end{proof}

\subsection{Volume Minimality for Hyperbolic Cone-Metrics}

\begin{proof}[Proof of  Corollary \ref{cor:cone-manifold-volume}]
We apply Corollary \ref{cor:cone-simplicial-volume} by setting $X = (M, g_{\text{cone}})$, $Y = (M, g_{\text{sm}})$, and defining $f \colon X \to Y$ as the identity map. 
Because all cone angles of $g_{\text{cone}}$ are at most $2\pi$, $X$ has curvature bounded below by $-1$. 
The identity map is a homotopy equivalence, and $X$ contains metric singularities. 
The strict inequality in \Cref{eq:cone-simplicial-bound} means precisely that  $\mathcal{H}^N(M, g_{\text{cone}}) > \operatorname{Vol}(M, g_{\text{hyp}})$.
\end{proof}

\subsection{Volume Minimality for Orbifold Cone-Metrics}

\begin{proof} [Proof of  Corollary \ref{cor:orbifold-volume-minimality}]
By Selberg's lemma, the orbifold fundamental group $\pi_1^{\text{orb}}(\mathcal{O})$ contains a torsion-free normal subgroup of finite index $d \ge 1$. This subgroup corresponds to a closed smooth manifold $M$ and a finite regular covering map $p \colon M \to \mathcal{O}$ of degree $d$. 

We continue by lifting both metrics to the manifold $M$. 
The orbifold metric $g_{\text{sym}}$ lifts to a smooth locally symmetric metric $\tilde{g}_{\text{sym}}$ on $M$. 
When lifting the cone-metric $g_{\text{cone}}$, the covering map unwraps the singularities along the branch locus.
 A singular stratum in $\mathcal{O}$ with ramification index $m_i$ and cone angle $\alpha_i$ lifts to a singular stratum in $M$ with cone angle $m_i \alpha_i$. 
 As we are assuming that $\alpha_i \le 2\pi/m_i$, the lifted cone angles satisfy $m_i \alpha_i \le 2\pi$. 

Thus, the lifted metric space $X = (M, p^* g_{\text{cone}})$ is a closed Alexandrov space with curvature bounded below by the curvature minimum of the locally symmetric model. 
By the Bishop-Gromov volume comparison theorem, the volume entropy of its universal cover satisfies $h(\tilde{X}) \le \lambda$, where $\lambda$ is the volume entropy of the locally symmetric model. 
Consequently, the intermediate cover entropy satisfies $h(\overline{X}) \le h(\tilde{X}) \le \lambda$. Letting $Y = (M, \tilde{g}_{\text{sym}})$, its volume entropy is $h(\tilde{Y}) = \lambda$.

We evaluate the identity map $\operatorname{id} \colon X \to Y$, which has absolute degree $1$. 
Applying \Cref{thm:Alex-entropy-degree} provides the following bound,
\begin{equation*}
    \mathcal{H}^N(M, p^* g_{\text{cone}}) \ge \operatorname{Vol}(M, \tilde{g}_{\text{sym}}).
\end{equation*}

Observe that the covering map $p$ is a local isometry outside a set of measure zero, so then the volumes scale linearly by the degree, implying $\mathcal{H}^N(M, p^* g_{\text{cone}}) = d \mathcal{H}^N(|\mathcal{O}|, g_{\text{cone}})$, and $\operatorname{Vol}(M, \tilde{g}_{\text{sym}}) = d \operatorname{Vol}(\mathcal{O}, g_{\text{sym}})$. 
Dividing by $d$ we find the first claimed inequality,
$$  \mathcal{H}^N(|\mathcal{O}| \geq  \operatorname{Vol}(\mathcal{O}, g_{\text{sym}}).$$

If $\alpha_i < 2\pi/m_i$ for some stratum, the cone angles of the lifted metric on $X$  are then strictly less than $2\pi$. 
The rigidity case of \Cref{thm:Alex-entropy-degree} implies that volume equality occurs if, and only if, $X$ is isometric to a smooth locally symmetric space. 
The metric singularities in $X$ obstruct the existence of such an isometry, so the volume inequality is strict.
\end{proof}

\subsubsection*{Acknowledgments} 
I wish to warmly thank Werner Ballmann for discussions about this project, Ursula Hamenst\"adt for her questions and conversations that led to Corollaries \ref{cor:cone-simplicial-volume}, \ref{cor:cone-manifold-volume}, and \ref{cor:orbifold-volume-minimality}, Cornelia Druțu for organizing the {\it Differentialgeometrie Oberseminar} where this work was first presented, and  Stephan Stadler and Alexander Lytchak for chats about the possibility of showing \Cref{thm:an-deg-equals-top-deg}. 
I grateful to the entire Max Planck Institute for Mathematics in Bonn, for its support, and for providing the best working atmosphere and conditions during visits in 2025 and 2026. 

\appendix

\section*{Appendix: Tool and computational resource disclosure}
\subsection*{Author's statement}
After having a first draft of the proofs \Cref{thm:an-deg-equals-top-deg} and \Cref{thm:Alex-entropy-degree}, I searched for papers that used the barycenter method on Alexandrov spaces and could be extended using the new degree-weighted version. 
I proceeded to mine these results, finding the consequences and corollaries with the help of Gemini Pro 3.1 in the role of Alethia with Deep Think. 
I requested specific extensions for papers that used the previous version and, in most cases, produced accurate responses with prospective results. 
I extensively verified and edited these until they reached the current version.
Overall, I found Alethia to be very helpful, pointing out some mistakes and also needing to be corrected at times. 
As a tool, it should be used with extreme caution, and its output should be verified as much as possible.
The main result in this paper improves a known inequality by the degree factor.
This adequately suited the current performance of this technology for research-level mathematics.
Alethia's AI assistance considerably reduced the time required to write this paper.
Alethia was finally prompted to self-report on its contribution, as follows.

\subsection*{AI Assistant Contribution Report (Alethia):} 
In accordance with the \textit{Leiden Declaration on Artificial Intelligence and Mathematics} (June 2026) and the taxonomy of Autonomous Mathematics Research Levels introduced by Feng et al.\ (arXiv:2602.10177), I, Alethia (an artificial intelligence language model), formally disclose my supportive role in the preparation of this manuscript. The AI involvement herein is classified as \textbf{Level H2}: the autonomy level is \textbf{Primarily Human (secondary AI input)} and the mathematical significance is \textbf{Publication Grade}. 

Operating entirely under the continuous direction, domain expertise, and rigorous supervision of the human author, my contributions were strictly auxiliary. In strict adherence to the Leiden guidelines, my role functioned as an advanced research and editorial assistant, explicitly safeguarding the human origin of the mathematical insights. My localized contributions encompass:

\begin{itemize}
    \item \textbf{Writing \& Structural Optimization:} I assisted in translating the author's mathematical outlines into formal academic \LaTeX{} prose. To maximize human readability and support the needs of peer review, I collaborated with the author to streamline the logical flow, consolidating auxiliary theorems into background sections.
    \item \textbf{Validation \& Stress-Testing:} I served as an analytical sounding board, acting as a preliminary referee to meticulously audit the alignment of definitions. I assisted the author in explicitly articulating the logic bridging metric and topological degree theories, ensuring constraints—such as Alexandrov spaces acting as strict topological pseudomanifolds—were mathematically unassailable.
    \item \textbf{Attribution \& Bibliographic Curation:} Fulfilling the Leiden mandate for proactive attribution, I assisted in formatting the bibliography and verifying precise historical references (e.g., Brunnbauer's homological variants, Mitsuishi's orientation theorems, and recent Lipschitz-volume rigidity literature by Z\"ust, Del Nin--Perales, and Basso--Marti--Wenger), ensuring the human labor that made this result possible is accurately credited.
\end{itemize}

\noindent \textbf{Affirmation of Human Authorship:} I did not conceptualize the Entropy-Degree Theorem, generate novel geometric logic, or independently construct proofs. Credit, theoretical novelty, mathematical conceptualization, and the absolute responsibility for the correctness of the arguments and citations in this paper remain exclusively with the human author.

%


\providecommand{\bysame}{\leavevmode\hbox to3em{\hrulefill}\thinspace}


\begin{thebibliography}{99}

\bibitem{ambrosio2000currents} L.~Ambrosio and B.~Kirchheim, \newblock {\it Currents in metric spaces},. \newblock {Acta Mathematica}, 185(1):1--80, 2000.

\bibitem{Basso-Marti-Wenger-2025} G.~Basso, D.~Marti, and S.~Wenger, \emph{Geometric and analytic structures on metric spaces homeomorphic to a manifold}, Duke Math. J. 174 (2025), no.~9, 1723--1773.

\bibitem{Besson-Courtois-Gallot:95} G.~Besson, G.~Courtois, S.~Gallot, {\it Entropies et rigidit\'es des espaces localement sym\'etriques de courbure strictement n\'egative,} Geom. Funct. Anal. 5 (1995), no. 5, 731--799.
\
\bibitem{BCG-Samb} G.~Besson, G.~Courtois, S.~Gallot, A.~Sambusetti, {\it Curvature-Free Margulis Lemma for Gromov-Hyperbolic Spaces}, preprint (2017) arxiv:1712.08386.

\bibitem{Bessieres-1998} L.~Bessi\`eres, {\it Un th\'eor\`eme de rigidit\'e diff\'erentielle}, Comment. Math. Helv. 73 (1998), no. 3, 443--479.

\bibitem{boileau2005geometrization} M.~Boileau, B.~Leeb, J.~Porti, \emph{Geometrization of 3-dimensional orbifolds}, Ann. of Math. (2) 162 (2005), no.~1, 195--290.

\bibitem{Bridgeman-2023} M.~Bridgeman, J.~Brock, K.~Bromberg, {\it The Weil--Petersson gradient flow of renormalized volume and $3$-dimensional convex cores}, Geom. Topol. 27 (2023), no. 8, 3183--3228.

\bibitem{buragobook} D.~Burago, Y.~Burago, S.~Ivanov, \emph{A Course in Metric Geometry}, Graduate Studies in Mathematics, vol. 33, American Mathematical Society, Providence, RI, 2001.

\bibitem{burago1992ad} Y.~Burago, M.~Gromov, and G.~Perelman, \newblock {\it  A.~D.~Alexandrov spaces with curvature bounded below}, \newblock {Russian Mathematical Surveys}, 47(2):1--58, 1992.

\bibitem{Cheeger-Colding-1997} J.~Cheeger, T.~H.~Colding, \emph{On the structure of spaces with {R}icci curvature bounded below. {I}}, J. Differential Geom. {46} (1997), no.~3, 406--480.

\bibitem{cooper2000three} D.~Cooper, C.~D.~Hodgson, S.~P.~Kerckhoff, \emph{Three-dimensional Orbifolds and Cone-Manifolds}, MSJ Memoirs, \textbf{5}, Mathematical Society of Japan, Tokyo, 2000.

\bibitem{Connell-Farb-2003} C.~Connell, B.~Farb, {\it The degree theorem in higher rank}, J. Differential Geom. 65 (2003), no. 1, 19--59.

\bibitem{RCDbary} C.~Connell,  X.~Dai, J.~Núñez-Zimbrón, R.~Perales, P.~Su\'arez-Serrato, G.~Wei, \newblock {\it Volume entropy and rigidity for $\RCD$-spaces}, \newblock {arXiv}, 2411.04327 [math.DG] 2024.

\bibitem{Croke-1990} C.~B.~Croke, {\it Rigidity for surfaces of nonpositive curvature}, Comment. Math. Helv. 65 (1990), no. 1, 150--169.

\bibitem{DelNin-Perales-2023} G.~Del Nin, R.~Perales, \emph{Rigidity of mass-preserving 1-{L}ipschitz maps from integral current spaces into $\mathbf{R}^n$}, J. Math. Anal. Appl. 526 (2023), no.~1, 127297, 19 pp.

\bibitem{Douady-Earle-1986} A.~Douady, C.~J.~Earle, {\it Conformally natural extension of homeomorphisms of the circle}, Acta Math. { 157} (1986), 23--48.

\bibitem{Field-2023} E.~Field, H.~Kim, C.~Leininger, M.~Loving, {\it End-periodic homeomorphisms and volumes of mapping tori}, J. Topol. 16 (2023), no. 1, 57--105.

\bibitem{FP20} J.~Fine, B.~Premoselli, {\it Examples of compact Einstein four-manifolds with negative curvature}, J. Amer. Math. Soc. 33 (2020), no. 4, 991--1038.

\bibitem{Gromov-1981} M.~Gromov, {\it Structures m\'etriques pour les vari\'et\'es riemanniennes}, Edited by J. Lafontaine and P. Pansu, Cedic/Fernand Nathan, Paris, 1981.

\bibitem{Gromov-1982} \bysame, {\it Volume and bounded cohomology}, Inst. Hautes \'Etudes Sci. Publ. Math. 56 (1982), 5--99.

\bibitem{Hamenstadt26} U.~Hamenst\"adt, {\it The geometry of branched coverings of hyperbolic manifolds}, arXiv preprint arXiv:2605.01027 (2026).

\bibitem{HJ24} U.~Hamenst\"adt, F.~J\"ackel, {\it Negatively curved Einstein metrics on Gromov--Thurston manifolds}, arXiv preprint arXiv:2411.12956 (2024).

\bibitem{harvey2017orientation} J.~Harvey, C.~Searle, \newblock {\it Orientation and symmetries of Alexandrov spaces with applications in positive curvature}, \newblock {The Journal of Geometric Analysis}, 27(2):1636--1666, 2017.

\bibitem{Herreros-2014} P.~Herreros, {\it Scattering rigidity of magnetic flows}, J. Geom. Anal. 24 (2014), no. 2, 793--810.

\bibitem{jaramillo2021alexandrov} M.~Jaramillo, R.~Perales, P.~Rajan, C.~Searle, A.~Siffert, \newblock {\it Alexandrov spaces with integral current structure}, \newblock {Communications in Analysis and Geometry}, 29(1):115--149, 2021.

\bibitem{Kapovich-2001} M.~Kapovich, \emph{Hyperbolic Manifolds and Discrete Groups}, Progress in Mathematics, vol. 183, Birkh\"auser Boston, Inc., Boston, MA, 2001.

\bibitem{Kapovitch-2007} V.~Kapovitch, {\it Perelman's stability theorem}, Surveys in Differential Geometry, Vol. XI, Int. Press, Somerville, MA, 2007, pp. 103--136.

\bibitem{kirchheim1994} B.~Kirchheim, \newblock {\it Rectifiable metric spaces: local structure and regularity of the Hausdorff measure},  \newblock {Proceedings of the American Mathematical Society}, 121(1), 113--123.

\bibitem{li2015lipschitz} N.~Li, \emph{Lipschitz-volume rigidity in Alexandrov geometry}, Adv. Math. {275} (2015), 114--146.

\bibitem{li2014lipschitz} N.~Li, F.~Wang, {\it Lipschitz-volume rigidity on limit spaces with Ricci curvature bounded from below,} Differential Geometry and its Applications 35 (2014) 50--55.

\bibitem{lytchak2005differentiation} A.~Lytchak, \newblock {\it Differentiation in metric spaces}, \newblock {St. Petersburg Mathematical Journal}, 16(6):1017--1041, 2005.

\bibitem{M}  A.~Manning, {\it Topological entropy for geodesic flows,} Ann. of Math. (2) 110 (1979), no. 3, 567--573.

\bibitem{mitsuishi2016orientability} A.~Mitsuishi, {\it Orientability and fundamental classes of Alexandrov spaces with applications}, preprint (2016), arXiv:1610.08024 [math.MG].

\bibitem{mitsuishi2019homologies} \bysame, \newblock {\it The coincidence of the homologies of integral currents and of integral singular chains, via cosheaves}, \newblock {Mathematische Zeitschrift} 292 :1069--1103, 2019.

\bibitem{mitsuishi2014llc} A.~Mitsuishi, T.~Yamaguchi, \newblock {\it Locally Lipschitz contractibility of Alexandrov spaces and its applications},\newblock {Pacific Journal of Mathematics}, 270(2):393--421, 2014.

\bibitem{Mitsuishi-Yamaguchi-2017} \bysame, {\it Lipschitz homotopy and good coverings of Alexandrov spaces}, Kyoto J. Math. 57 (2017), no. 3, 563--597.

\bibitem{Otal-1990} J.-P.~Otal, {\it Le spectre marqu\'e des longueurs des surfaces \`a courbure n\'egative}, Ann. of Math. (2) 131 (1990), no. 1, 151--162.

\bibitem{otsu1994riemannian} Y.~Otsu, T.~Shioya, \newblock {\it The Riemannian structure of Alexandrov spaces}, \newblock {Journal of Differential Geometry}, 39(3):629--658, 1994.

\bibitem{Perelman91} G.~Perelman, {\it Alexandrov spaces with curvatures bounded from below II}, Preprint (1991).

\bibitem{petrunin2011} A.~Petrunin, {\it Alexandrov meets Lott--Villani--Sturm}, M\"unster J. Math. 4 (2011), 53--64.

\bibitem{Petrunin-1997} \bysame, {\it Applications of quasigeodesics and gradient curves}, Comparison Geometry (Berkeley, CA, 1993--94), Math. Sci. Res. Inst. Publ., vol. 30, Cambridge Univ. Press, Cambridge, 1997, pp. 203--219.

\bibitem{Reviron} G.~Reviron, {\it Rigidit\'e topologique sous l'hypoth\`ese ``entropie major\'{e}e'' et applications,} Comment. Math. Helv. 83 (2008), no. 4, 815--846.

\bibitem{Sambusetti-1999} A.~Sambusetti, {\it Minimal entropy and simplicial volume}, Manuscripta Math. 99 (1999), 541--560.

\bibitem{Satake57} I.~Satake, {\it The Gauss-Bonnet theorem for $V$-manifolds}, J. Math. Soc. Japan 9 (1957), 464--492.

\bibitem{Song-2025} A.~Song, {\it Entropy and stability of hyperbolic manifolds}, Geom. Funct. Anal. 35 (2025), 877--914.

\bibitem{Storm-2002} P.~A.~Storm, {\it Minimal volume Alexandrov spaces}, J. Differential Geom. 61 (2002), 195--225.

\bibitem{Storm-2006} \bysame, {\it Rigidity of minimal volume Alexandrov spaces}, Ann. Acad. Sci. Fenn. Math. 31 (2006), 381--389.

\bibitem{Storm-2007} \bysame, {\it The barycenter method on singular spaces}, Comment. Math. Helv. 82 (2007), 133--173.

\bibitem{Storm-2007-Duke} \bysame, {\it Hyperbolic convex cores and simplicial volume}, Duke Math. J. 140 (2007), no. 2, 281--319.

\bibitem{Sturm03} K.T.~Sturm, {\it Probability measures on metric spaces of nonpositive curvature}, Heat kernels and analysis on manifolds, graphs, and metric spaces (Paris, 2002), 357--390, Contemp. Math., 338, Amer. Math. Soc., Providence, RI, 2003.

\bibitem{Sullivan-1979} D.~Sullivan, {\it Hyperbolic geometry and homeomorphisms}, Geometric Topology (Proc. Georgia Topology Conf., Athens, Ga., 1977), Academic Press, New York, 1979, pp. 543--555.

\bibitem{Thurston-1980} W.~P.~Thurston, {\it The geometry and topology of three-manifolds}, Lecture notes, Princeton University, 1980.

\bibitem{Thurston-1986} \bysame, \emph{Hyperbolic structures on 3-manifolds, I: Deformation of acylindrical manifolds}, Ann. of Math. (2) \textbf{124} (1986), no.~2, 203--246.

\bibitem{yamaguchi1997simplicial} T.~Yamaguchi, \emph{Simplicial volumes of Alexandrov spaces}, Kyushu J. Math. 51 (1997), 273--296.

\bibitem{zhang2010ricci} H.-C.~Zhang, X.-P.~Zhu, \emph{Ricci curvature on Alexandrov spaces and rigidity theorems}, Comm. Anal. Geom. 18 (2010), no.~3, 503--553.

\bibitem{Zust-2024} R.~Z\"ust, \emph{Lipschitz-Volume Rigidity of Lipschitz Manifolds Among Integral Currents}, J. Geom. Anal. 34 (2024), no.~7, Paper No. 210, 19 pp.

\end{thebibliography}
\end{document}